\newcommand{\sigmaop}[1]{\mathop{\mathpalette\@sigmaop{#1}}\slimits@}
\newcommand{\@sigmaop}[2]{%
  \vphantom{\sum}%
  \sbox\z@{$\m@th#1\sum$}%
  \dimen@=\ht\z@ \advance\dimen@\dp\z@
  \dimen\tw@=\wd\z@
  \ifx#1\displaystyle\dimen@=.9\dimen@\fi
  \ooalign{%
    \hidewidth
    $\vcenter{\hbox{$\m@th#1#2$\kern.3\dimen\tw@}%
     \ifx#1\scriptstyle\kern-.25ex\fi}$\hidewidth\cr
    $\vcenter{\hbox{%
      \resizebox{!}{\dimen@}{$\m@th\boxtimes$}%
    }\ifx#1\scriptstyle\kern-.25ex\fi}$\cr
  }%
}
\numberwithin{equation}{subsection}
\newtheorem{theorem}{Theorem}[subsection]
\newtheorem{lemma}[theorem]{Lemma}
\newtheorem{conjecture}[theorem]{Conjecture}
\newtheorem{corollary}[theorem]{Corollary}
\newtheorem{definition}[theorem]{Definition}
\newtheorem{question}[theorem]{Question}
\newtheorem{proposition}[theorem]{Proposition}
\newtheorem{assumption}[theorem]{Assumption}
\newtheorem*{thm1}{Theorem 1}
\newtheorem*{thm2}{Theorem 2}
\newtheorem*{thm3}{Theorem 3}
\theoremstyle{remark}
\newtheorem{rmk}[theorem]{Remark}
\newcommand{\GZip}{\mathop{\text{$G$-{\tt Zip}}}\nolimits}
\newcommand{\HZip}{\mathop{\text{$H$-{\tt Zip}}}\nolimits}
\newcommand{\GF}{\mathop{\text{$G$-{\tt ZipFlag}}}\nolimits}
\newskip\procskipamount
\newskip\interskipamount
\newskip\refskipamount
\newcommand{\procskip}{\vskip\procskipamount}
\newcommand{\interskip}{\vskip\interskipamount}
\newcommand{\refskip}{\vskip\refskipamount}
\newcommand{\procbreak}{\par
   \ifdim\lastskip<\procskipamount\removelastskip
   \penalty-100
   \procskip\fi
   \noindent\ignorespaces}
\newcommand{\titlebreak}{\par%
\ifdim\lastskip<\interskipamount\removelastskip%
\penalty10000%
\interskip\fi%
\noindent}%
\newcommand{\interbreak}{\par%
\ifdim\lastskip<\interskipamount\removelastskip%
\penalty-100%
\interskip\fi%
\noindent\ignorespaces}%
\newcommand{\refbreak}{\par%
\ifdim\lastskip<\refskipamount\removelastskip%
\penalty-100%
\refskip\fi%
\noindent\ignorespaces}%
\newcounter{listcounter}
\newcounter{deflistcounter}
\newcounter{equivcounter}
\newskip{\itemsepamount}
\newskip{\topsepamount}
\newenvironment{assertionlist}{%
  \begin{list}
    {\upshape (\arabic{listcounter})}
    {\setlength{\leftmargin}{18pt}
     \setlength{\rightmargin}{0pt}
     \setlength{\itemindent}{0pt}
     \setlength{\labelsep}{5pt}
     \setlength{\labelwidth}{13pt}
     \setlength{\listparindent}{\parindent}
     \setlength{\parsep}{0pt}
     \setlength{\itemsep}{\itemsepamount}
     \setlength{\topsep}{\topsepamount}
     \usecounter{listcounter}}}
  {\end{list}}
\newenvironment{definitionlist}{%
  \begin{list}
    {\upshape (\alph{deflistcounter})}
    {\setlength{\leftmargin}{18pt}
     \setlength{\rightmargin}{0pt}
     \setlength{\itemindent}{0pt}
     \setlength{\labelsep}{5pt}
     \setlength{\labelwidth}{13pt}
     \setlength{\listparindent}{\parindent}
     \setlength{\parsep}{0pt}
     \setlength{\itemsep}{\itemsepamount}
     \setlength{\topsep}{\topsepamount}
     \usecounter{deflistcounter}}}
  {\end{list}}
\newenvironment{equivlist}{%
  \begin{list}
    {\upshape (\roman{equivcounter})}
    {\setlength{\leftmargin}{18pt}
     \setlength{\rightmargin}{0pt}
     \setlength{\itemindent}{0pt}
     \setlength{\labelsep}{5pt}
     \setlength{\labelwidth}{13pt}
     \setlength{\listparindent}{\parindent}
     \setlength{\parsep}{0pt}
     \setlength{\itemsep}{\itemsepamount}
     \setlength{\topsep}{\topsepamount}
     \usecounter{equivcounter}}}
  {\end{list}}
\newenvironment{bulletlist}{%
  \begin{list}
    {\upshape \textbullet}
    {\setlength{\leftmargin}{18pt}
     \setlength{\rightmargin}{0pt}
     \setlength{\itemindent}{0pt}
     \setlength{\labelsep}{6pt}
     \setlength{\labelwidth}{12pt}
     \setlength{\listparindent}{\parindent}
     \setlength{\parsep}{0pt}
     \setlength{\itemsep}{\itemsepamount}
     \setlength{\topsep}{\topsepamount}}}
  {\end{list}}
\newcommand{\Acal}{{\mathcal A}}
\newcommand{\Ccal}{{\mathcal C}}
\newcommand{\Hcal}{{\mathcal H}}
\newcommand{\Ical}{{\mathcal I}}
\newcommand{\Lcal}{{\mathcal L}}
\newcommand{\Ocal}{{\mathcal O}}
\newcommand{\Scal}{{\mathcal S}}
\newcommand{\Ucal}{{\mathcal U}}
\newcommand{\Vcal}{{\mathcal V}}
\newcommand{\Xcal}{{\mathcal X}}
\newcommand{\Zcal}{{\mathcal Z}}
\newcommand{\pfr}{{\mathfrak p}}
\newcommand{\sfr}{{\mathfrak s}}
\newcommand{\Sfr}{{\mathfrak S}}
\renewcommand{\AA}{\mathbb{A}}
\newcommand{\BB}{\mathbb{B}}
\newcommand{\CC}{\mathbb{C}}
\newcommand{\FF}{\mathbb{F}}
\newcommand{\GG}{\mathbb{G}}
\newcommand{\NN}{\mathbb{N}}
\newcommand{\QQ}{\mathbb{Q}}
\newcommand{\RR}{\mathbb{R}}
\newcommand{\TT}{\mathbb{T}}
\newcommand{\ZZ}{\mathbb{Z}}
\newcommand{\Ascr}{{\mathscr A}}
\newcommand{\cent}{{\rm Cent}}
\DeclareMathOperator{\Gal}{Gal}
\DeclareMathOperator{\Span}{Span}
\DeclareMathOperator{\Lie}{Lie}
\DeclareMathOperator{\ord}{ord}
\DeclareMathOperator{\Stab}{Stab}
\DeclareMathOperator{\pr}{pr}
\DeclareMathOperator{\Sh}{Sh}
\DeclareMathOperator{\spec}{Spec}
\DeclareMathOperator{\Sym}{Sym}
\DeclareMathOperator{\GL}{GL}
\DeclareMathOperator{\GSp}{GSp}
\DeclareMathOperator{\Sp}{Sp}
\DeclareMathOperator{\U}{U}
\DeclareMathOperator{\GU}{GU}
\newcommand{\gx}{(\mathbf G, \mathbf X)}
\newcommand{\End}{{\rm End}}
\newcommand{\loccit}{{\em loc.\ cit. }}
\newcommand{\loccitn}{{\em loc.\ cit.}}
\newcommand{\diag}{{\rm diag}}
\DeclareMathOperator{\Tr}{Tr}
\DeclareMathOperator{\val}{val}
\DeclareMathOperator{\appro}{approx}
\DeclareMathOperator{\ind}{ind}
\DeclareMathOperator{\iden}{id}
\DeclareMathOperator{\Norm}{Norm}
\DeclareMathOperator{\Res}{Res}
\DeclareMathOperator{\Flag}{Flag}
\DeclareMathOperator{\Frac}{Frac}
\DeclareMathOperator{\Cox}{Cox}
\DeclareMathOperator{\triv}{triv}
\DeclareMathOperator{\van}{van}
\DeclareMathOperator{\aut}{aut}
\newcommand{\relmiddle}[1]{\mathrel{}\middle#1\mathrel{}}
\newcommand{\xdasharrow}[2][->]{
\tikz[baseline=-\the\dimexpr\fontdimen22\textfont2\relax]{
\node[anchor=south,font=\scriptsize, inner ysep=1.5pt,outer xsep=2.2pt](x){#2};
\draw[shorten <=3.4pt,shorten >=3.4pt,dashed,#1](x.south west)--(x.south east);
}
}
\newcommand{\Ha}{\mathsf{Ha}}
\newcommand{\pha}{\mathsf{pHa}}
\newcommand{\zip}{\mathsf{zip}}
\newcommand{\GS}{\mathsf{GS}}
\newcommand{\flag}{\mathsf{flag}}
\begin{document}

\title{The cone conjecture for vector-valued Siegel automorphic forms} 

\author{Jean-Stefan Koskivirta}

\date{}

\maketitle

\begin{abstract}
We prove that the effective cone of automorphic vector bundles on the Siegel modular variety $\Acal_n$ in characteristic $p$ at a place of good reduction is encoded by the stack of $G$-zips of Pink--Wedhorn--Ziegler. Specifically, we show that the degree zero cohomology groups of automorphic vector bundles always vanish outside of the zip cone. This result is a special case of a general conjecture formulated by the Goldring and the author for all Hodge-type Shimura varieties of good reduction. In the case $n=3$, we give explicit conditions for the vanishing of the $0$-th cohomology group. Finally, in the course of the proof we define the notion of automorphic forms of trivial-type and study their properties.
\end{abstract}

\section{Introduction}

In this paper, we study vector-valued Siegel automorphic forms in characteristic $p$. We verify a conjecture about the weights of such automorphic forms, that was first proposed by Goldring and the author in \cite{Goldring-Koskivirta-global-sections-compositio} for general Shimura varieties of Hodge-type. Roughly speaking, it asserts that the vanishing of the cohomology in degree zero of automorphic vector bundles is controlled by the stack of Pink--Wedhorn--Ziegler (\cite{Pink-Wedhorn-Ziegler-zip-data}). The first instance of this type of vanishing result was the case of Hilbert--Blumenthal Shimura varieties, carried out in \cite{Goldring-Koskivirta-global-sections-compositio}, and independently by Diamond--Kassaei in \cite{Diamond-Kassaei}. We briefly review the results in this case. Let $X_{\mathbf{F}}$ be the special fiber at a prime $p$ of good reduction of a Hilbert--Blumenthal Shimura variety, attached to a totally real field extension $\mathbf{F}/\QQ$. Let $\Sigma$ denote the set of real embeddings $\tau\colon \mathbf{F}\to \RR$. Write $\Omega$ for the Hodge vector bundle of $X_{\mathbf{F}}$. The action of $\Ocal_{\mathbf{F}}$ yields a decomposition $\Omega=\bigoplus_{\tau\in \Sigma} \omega_\tau$ into line bundles. To any tuple $\underline{k}=(k_\tau)_{\tau\in \Sigma}\in \ZZ^\Sigma$, attach the line bundle $\omega^{\underline{k}}\colonequals \bigotimes_{\tau \in \Sigma}\omega_{\tau}^{k_\tau}$ on $X_{\mathbf{F}}$. We define a cone $\Ccal_{\pha}\subset \ZZ^\Sigma$ as the cone spanned by the weights of partial Hasse invariants constructed by Andreatta--Goren. The vanishing theorem of \cite{Goldring-Koskivirta-global-sections-compositio} states that for any $\underline{k}$ which lies in the complement of $\Ccal_{\pha}$, the space $H^0(X_{\mathbf{F}},\omega^{\underline{k}})$ vanishes. In particular, when $p$ is split in $\mathbf{F}$, this vanishing result takes the following form:
\begin{equation}\label{neg-intro}\tag{1}
    H^0(X_{\mathbf{F}}, \omega^{\underline{k}})\neq 0 \ \Longleftrightarrow \ \forall \tau\in \Sigma, \ k_\tau\leq 0.
\end{equation}

In the present article, we prove an analogue of these statements for the family of Siegel modular varieties $\Acal_n$ (for any $n\geq 1$). In this case, the cone spanned by partial Hasse invariants $\Ccal_{\pha}$ is no longer the relevant cone that controls cohomology vanishing, and needs to be replaced with the zip cone $\Ccal_{\zip}$. This cone is a group-theoretical object attached to the stack of $G$-zips of Pink--Wedhorn--Ziegler. We now explain our results in more detail. Let $n\geq 1$ be an integer and let $\Acal_{n}$ denote the moduli stack over $\ZZ$ of principally polarized abelian varieties of rank $n$. We may also consider the moduli space $\Acal_{n,K}$ with $K$-level structure, where $K$ is a compact open subgroup of the $\AA_f$-points of a general symplectic group $\mathbf{G}=\GSp_{2n,\QQ}$. Let $p$ be a prime number and assume that $K=K_p K^p$ where $K^p\subset \mathbf{G}(\AA_f^p)$ is compact open and $K_p\subset \mathbf{G}(\QQ_p)$ is hyperspecial (we say that $p$ is a prime of good reduction). In this case, the scheme $\Acal_{n,K}$ extends naturally to a smooth, quasi-projective $\ZZ_{(p)}$-scheme. Denote by $\overline{\Acal}_{n,K}\colonequals \Acal_{n,K} \otimes_{\ZZ_{(p)}} \overline{\FF}_p$ the special fiber over $\overline{\FF}_p$ of $\Acal_{n,K}$. Let $\pi\colon \Ascr\to \Acal_{n,K}$ denote the universal abelian scheme over $\Acal_{n,K}$. The sheaf $\Omega=\pi_*\Omega^1_{\Ascr/\Acal_{n,K}}$ is called the Hodge vector bundle of $\Acal_{n,K}$. For any $n$-tuple $\lambda=(k_1,\dots,k_n)$ of integers we can naturally attach a vector bundle $\Vcal_I(\lambda)$ on $\Acal_{n,K}$, by applying a Schur functor to $\Omega$. In our sign convention, one has $\Omega=\Vcal_I(0,\dots,0,-1)$, and its determinant $\omega=\bigwedge^n\Omega$ (the Hodge line bundle) corresponds to $\lambda=(-1,\dots,-1)$. If we denote by $B_0$ the lower-triangular subgroup of $\GL_{n,\QQ}$ and view $\lambda=(k_1,\dots,k_n)$ as a character of the diagonal torus $T_0\subset B_0$, then $\Vcal_I(\lambda)$ is modeled on the induced $\GL_{n,\QQ}$-representation $V_I(\lambda)=\ind_{B_0}^{\GL_n}(\lambda)$.

For any field $F$ which is a $\ZZ_{(p)}$-algebra, the elements of $H^0(\Acal_{n,K}\otimes_{\ZZ_{(p)}} F,\Vcal_I(\lambda))$ are called Siegel automorphic forms of weight $\lambda$, level $K$, with coefficients in $F$. We are mainly interested in the cases $F=\overline{\FF}_p$. In several previous papers (e.g. \cite{Goldring-Koskivirta-global-sections-compositio, Goldring-Koskivirta-divisibility}), we studied (for general Shimura varieties of Hodge-type) the question of determining which weights $\lambda$ admit nonzero automorphic forms over $F$. We define the following set:
\begin{equation}\label{CKF-intro} \tag{2}
    C_K(F) \colonequals \{\lambda\in \ZZ^n \ | \ H^0(\Acal_{n,K}\otimes_{\ZZ_{(p)}} F, \Vcal_I(\lambda)) \neq 0 \}.
\end{equation}
This set depends on the choice of level $K$. However, the author showed in \cite[Corollary 1.5.3]{Koskivirta-automforms-GZip} that the saturation of this set $\Ccal(F)$ is independent of $K$. Here, the saturation is the set of $\lambda\in X^*(T)$ such that some positive multiple lies in $C_K(F)$. For $F=\CC$, the answer is given by the Griffiths--Schmid conditions. In the case of Siegel modular varieties, in our sign convention, the Griffiths--Schmid cone is defined by
\begin{equation}
\Ccal_{\GS}=\left\{ \lambda=(a_1,\dots, a_n)\in \ZZ^n \ \relmiddle| \ 0\geq a_1\geq \dots \geq a_n \right\}.
\end{equation}
It is known to experts (but references in the literature are scarce) that $\Ccal(\CC)=\Ccal_{\GS}$. We showed that $\Ccal(\CC)\subset \Ccal_{\GS}$ for general Hodge-type Shimura varieties via characteristic $p$ methods in \cite{Goldring-Koskivirta-GS-cone}.

On the other hand, very little is known on the cone $\Ccal(\overline{\FF}_p)$. We conjectured in \cite{Goldring-Koskivirta-global-sections-compositio} that this set is encoded by the stack of $G$-zips. Specifically, write $G$ for the general symplectic group $G=\GSp_{2n,\FF_p}$ over $\FF_p$. Let $\mu\colon \GG_{\mathrm{m}}\to G$ be a minuscule cocharacter. By results of Wedhorn--Viehmann and Zhang, there exists a smooth surjective morphism of stacks
\begin{equation}
  \zeta \colon  \overline{\Acal}_{n,K} \to \GZip^\mu
\end{equation}
where $\GZip^\mu$ is the stack of $G$-zips of type $\mu$. The fibers of $\zeta$ are called the Ekedahl--Oort strata and are determined by the isomorphism classe of the $p$-torsion of the underlying abelian variety. 
We can define the zip cone similarly to \eqref{CKF-intro} as the set of $\lambda\in \ZZ^n$ such that $\Vcal_I(\lambda)$ admits nonzero section on $\GZip^\mu$. This set can be described in terms of representation theory of reductive groups. Specifically, for $\lambda\in \ZZ^n$, write again $V_I(\lambda)=\ind_{B_0}^{\GL_n}(\lambda)$ (viewed now as an algebraic representation of $\GL_{n,\FF_p}$). Let $V_I(\lambda)=\bigoplus_{\chi\in \ZZ^n} V_I(\lambda)_\chi$ be the $T_0$-weight decomposition and define $V_I(\lambda)_{\leq 0}$ as the direct sum of the weight spaces $V_I(\lambda)_\chi$ satisfying $\chi=(a_1,\dots,a_n)$ with $a_n\leq 0$. The author showed (\cite{Koskivirta-automforms-GZip}) that the space $H^0(\GZip^\mu,\Vcal_I(\lambda))$ identifies with the intersection $V_I(\lambda)^{L(\FF_p)} \cap V_I(\lambda)_{\leq 0}$. Therefore, we have:
\begin{equation}\label{Czip-intro}\tag{3}
C_{\zip} = \{\lambda\in \ZZ^n \ | \ V_I(\lambda)^{\GL_n(\FF_p)} \cap V_I(\lambda)_{\leq 0} \neq 0 \}.
\end{equation}
Denote by $\Ccal_{\zip}$ the saturation of $C_{\zip}$. By pullback via the map $\zeta$, we always have $\Ccal_{\zip}\subset \Ccal(\overline{\FF}_p)$. The main result of this paper is the following:
\begin{thm1}
For the Siegel-type Shimura variety $\overline{\Acal}_{n,K}$, we have $\Ccal(\overline{\FF}_p)=\Ccal_{\zip}$.
\end{thm1}
This theorem is a vanishing result for the coherent cohomology in degree $0$ of automorphic vector bundles. Indeed, it states that for all $\lambda\notin \Ccal_{\zip}$, the cohomology group $H^0(\overline{\Acal}_{n,K}, \Vcal_I(m\lambda))$ is zero for any $m\geq 1$. More generally, we conjectured (\cite{Goldring-Koskivirta-global-sections-compositio}) that the equality $\Ccal(\overline{\FF}_p)=\Ccal_{\zip}$ hold for any Hodge-type Shimura variety at a place of good reduction. A few instances of this conjecture were proved in previous papers: The case of Hilbert--Blumenthal Shimura varieties was carried out in \cite{Goldring-Koskivirta-global-sections-compositio} (and independently by Diamond--Kassaei in \cite{Diamond-Kassaei}). We also showed this conjecture for unitary Shimura varieties attached to $\GU(r,s)$ for a totally imaginary quadratic extension $\mathbf{E} / \QQ$ when $r+s\leq 4$ (except when $r=s=2$ and $p$ is inert) in \cite{Goldring-Koskivirta-divisibility}. In the same paper, we treated the case of Siegel modular varieties of rank $2$ and $3$. The author proved the conjecture for unitary Shimura varieties $\U(2)$ (for an arbitrary CM extension $\mathbf{E} / \mathbf{F}$). The verification of the conjecture for the whole Siegel family $(\overline{\Acal}_{n,K})$ is a major achievement in our program.

All these results were proved using a tedious, combinatorial approach based on the notion of intersection-sum cone. Although this approach provides useful information (for example, regarding divisibility of automorphic forms), it is computationally too difficult to be implemented when the Weyl group of the reductive group $G$ is large. Therefore, in this paper we use a different approach and prove the equality $\Ccal(\overline{\FF}_p)=\Ccal_{\zip}$ directly, but without describing this set explicitly. Using \eqref{Czip-intro}, it becomes now a representation-theoretical question to determine the cone $\Ccal(\overline{\FF}_p)$. The explicit form of this cone is investigated in work in progress. For $n=3$, we are able to give explicit equations for $\Ccal(\overline{\FF}_p)$. In particular, we show the following optimal vanishing result:
\begin{thm2}
Let $\lambda=(k_1,k_2,k_3)\in \ZZ^3$ and assume that $p^2 k_1+k_2+pk_3 > 0$ or that $pk_1+p^2 k_2+k_3 > 0$, Then one has
    \begin{equation}
H^0(\overline{\Acal}_{3,K},\Vcal_I(\lambda)) = 0.
    \end{equation}
\end{thm2}
For $p\geq 5$, this result was already shown in \cite{Goldring-Koskivirta-divisibility}. The present article shows that the assumption $p\geq 5$ (which was essential in \loccit) is superfluous, and provides a much more enlightening proof.

We briefly explain the proof of Theorem 1. The theory of $G$-zips shows that there is a natural $\GL_n(\FF_p)$-principal bundle on the ordinary locus $\overline{\Acal}_{n,K}^{\ord}$. This allows us to define a subbundle $\Vcal_I(\lambda)_{\triv}\subset \Vcal_I(\lambda)$ (defined only over $\overline{\Acal}_{n,K}^{\ord}$), corresponding to the sub $\GL_n(\FF_p)$-representation $V_I(\lambda)^{\GL_n(\FF_p)}\subset V_I(\lambda)$ (see section \ref{sec-triv-vb}). We say that an automorphic form $\lambda\in H^0(\overline{\Acal}_{n,K},\Vcal_I(\lambda))$ is of trivial-type if the restriction of $f$ to $\overline{\Acal}_{n,K}^{\ord}$ lies in the subspace
\begin{equation}
H^0(\overline{\Acal}_{n,K}^{\ord},\Vcal_I(\lambda)_{\triv}) \ \subset \ H^0(\overline{\Acal}_{n,K}^{\ord},\Vcal_I(\lambda)).
\end{equation}
Denote by $H^0(\overline{\Acal}_{n,K},\Vcal_I(\lambda))_{\triv}$ the subspace of automorphic forms of weight $\lambda$ of trivial-type. One of our main ingredients is the following:
\begin{thm3}
Assume that $H^0(\overline{\Acal}_{n,K},\Vcal_I(\lambda))\neq 0$. Then there exists $d\geq 1$ such that $H^0(\overline{\Acal}_{n,K},\Vcal_I(d\lambda))_{\triv}$ is nonzero.
\end{thm3}

More precisely, denote by $R$ and $R_{\triv}$ the direct sum over $\lambda\in \ZZ^n$ of $H^0(\overline{\Acal}_{n,K},\Vcal_I(\lambda))$ and $H^0(\overline{\Acal}_{n,K},\Vcal_I(\lambda))_{\triv}$ respectively. Then $R$, $R_{\triv}$ are naturally endowed with a structure of $\overline{\FF}_p$-algebra, and $R_{\triv}\subset R$ is a subalgebra. We prove that this extension is integral (Theorem \ref{thm-Rtriv-int}). Theorem 3 above is an easy consequence of this result. To prove Theorem 1, we show that the space $H^0(\overline{\Acal}_{n,K},\Vcal_I(\lambda))$ exhibits the same pattern as $H^0(\GZip^\mu,\Vcal_I(\lambda))$, i.e. is determined by the interaction of an "ordinary" part $V_I(\lambda)^{\GL_n(\FF_p)}$ and a "negative" part $V_I(\lambda)_{\leq 0}$ as in \eqref{Czip-intro} above. In the case of $\Acal_{n,K}$, the trivial-type automorphic forms correspond to the ordinary part. The negative part is "detected" via an embedding of a Hilbert modular variety $X_{\mathbf{F}}$ for which $p$ splits in $\mathbf{F}$ and corresponds to the condition \eqref{neg-intro}.

\subsection*{Outline of the paper}
In section 1, we recall some general facts about quotient stacks, and introduce the symmetric transforms attached to an $H$-principal bundle, when $H$ is a finite \'{e}tale group scheme. In section 2, we review the stack of $G$-zips, and vector bundles thereon. The following section is devoted to the theory of Shimura varieties, especially Siegel and Hilbert modular varieties, and their connection with the stack of $G$-zips. In section 5, we define the trivial part (over the ordinary locus) of an automorphic vector bundle and prove Theorem 3. Finally, in the last section we explain the proof of our main result Theorem 1.

\subsection*{Acknowledgements}
We are grateful to Wushi Goldring for many helpful conversations on related topics. We also would like to thank Fred Diamond and Payman Kassaei for organizing a workshop at King's College London in March 2024, where the Cone Conjecture was among the topics of discussion. This work was supported by JSPS KAKENHI Grant Number 21K13765.

\section{Vector bundles and quotient stacks}

\subsection{Associated sheaf of a representation} \label{sec-assoc-sheaf}

Fix an algebraically closed field $k$. Let $H$ be an algebraic $k$-group acting on the left on a $k$-scheme $X$. Denote by $[H\backslash X]$ the associate quotient stack. Let $\rho\colon H\to \GL(V)$ be an algebraic representation of $H$ on a finite-dimensional $k$-vector space $V$. By the associated sheaf construction (\cite[I.5.8]{jantzen-representations}), one can attach to $\rho$ a vector bundle $\Vcal(\rho)$ on the stack $[H\backslash X]$. The space of global sections $H^0([H\backslash X],\Vcal(\rho))$ can be identified with the $k$-vector space of regular maps $f\colon X\to V$ satisfying the identity:
\begin{equation}\label{glob-sec}
f(g\cdot x)=\rho(g) f(x), \quad g\in G, \ x\in X.
\end{equation}

\subsection{Torsors and global sections}\label{sec-torsors}
The classifying stack of principal $H$-bundles is the quotient stack $B(H)\colonequals [H\backslash \spec(k)]$. If $X$ is a $k$-scheme endowed with a morphism of stacks $\zeta\colon X\to B(H)$, then the fiber product
\begin{equation}
    \xymatrix@1@M=5pt{
Y \ar[r] \ar[d]_-{\beta} & \spec(k) \ar[d] \\
X \ar[r]_-{\zeta} & B(H)    }
\end{equation}
gives a principal $H$-bundle $\beta\colon Y\to X$. In particular $Y$ is endowed with a left action of $H$ and the quotient stack $[H\backslash Y]$ identifies with $X$. In particular, for any algebraic $H$-representation $\rho\colon H\to \GL(V)$, global sections of the vector bundle $\zeta^*(\Vcal(\rho))$ identify with $H$-equivariant regular maps $f\colon Y\to V$, as in \eqref{glob-sec} above.

\subsection{Finite \'{e}tale group schemes} \label{sec-fin-et}
Assume now that $H$ is a finite \'{e}tale group scheme over $k$ and let $\zeta\colon X\to B(H)$ be a morphism. Let $\rho\colon H\to \GL(V)$ be an algebraic representation. We denote by $V^{H}\subset V$ the $H$-invariant subspace. Clearly, $V^H$ is a sub-$H$-representation of $V$. Write $\Vcal(\rho)_{\triv}\subset \Vcal(\rho)$ for the vector bundle on $B(H)$ associated with $V^H$. Similarly, if $\Vcal\colonequals \zeta^*(\Vcal(\rho))$, we write simply $\Vcal_{\triv}$ for the subvector bundle $\zeta^*(\Vcal(\rho)_{\triv})$. By section \ref{sec-torsors} above, the space $H^0(X,\Vcal_{\triv})$ can  be expressed as follows:
\begin{equation}
    H^0(X,\Vcal_{\triv}) = \left\{ f\colon Y\to V^H \ \textrm{regular} \ \relmiddle| \ f(h\cdot y)=f(y), \  h\in H, y\in Y \right\}.
\end{equation}
Note that in the trivial case when $X=B(H)$, $\zeta=\iden_{B(H)}$, we have (by definition) an identification
\begin{equation}\label{BH-triv}
    H^0(B(H),\Vcal) = H^0(B(H),\Vcal_{\triv})= V^H,
\end{equation}
where we used the obvious relation $(V^H)^H = V^H$.

\subsection{Symmetric transforms}\label{sec-norm-map}
We continue to assume that $H$ is a finite \'{e}tale group scheme over $k$. Let $X$ be a $k$-scheme endowed with a morphism $\zeta\colon X\to B(H)$ and write $\beta\colon Y\to X$ for the attached $H$-principal bundle on $X$. Write $N$ for the cardinality of the finite group $H$. Define the norm and trace maps of $V$ as follows:
\begin{align*}
    &\Norm_H\colon V\to \Sym^N(V), \quad x\mapsto \bigotimes_{h\in H} h\cdot x \\
    &\Tr_H\colon V\to V, \quad x\mapsto \sum_{h\in H} h\cdot x.
\end{align*}
More generally, for any integer $1\leq d\leq N$, we may define the $d$-th symmetric function
\begin{equation}
    \sfr^{(d)}_{H}\colon V\to \Sym^d(V), \quad x\mapsto S_d(\{h\cdot x\}_{h\in H})
\end{equation}
where $S_d(\{X_{h}\}_{h\in H})$ is the $d$-th symmetric polynomial, i.e. the coefficient of degree $N-d$ in the expression $\prod_{h\in H}(t-X_h)$. It is clear that the maps $\sfr^{(d)}_{H}$ are regular morphisms of affine varieties over $k$. Furthermore, by construction the element $\sfr^{(d)}_{H}(x)$ lies in the $H$-invariant part $\left(\Sym^d(V)\right)^H$. Let $f\in H^0(X,\Vcal)$ be a global section of $\Vcal\colonequals \zeta^*(\Vcal(\rho))$. View $f$ as a  regular $H$-equivariant map $f\colon Y\to V$, where $\beta\colon Y\to X$ is the $H$-principal bundle attached to $\zeta$. The composition $\sfr^{(d)}_{H} \circ f \colon Y\to \left(\Sym^d(V)\right)^H$ is regular and $H$-equivariant, hence yields a section
\begin{equation}
    \sfr^{(d)}_{H}(f)\in H^0(X, \Sym^d(\Vcal)_{\triv}).
\end{equation}
This construction gives for each $1\leq d\leq N$ a (non-linear) operator 
\begin{equation}
    \sfr^{(d)}_{H}\colon H^0(X,\Vcal) \to H^0(X, \Sym^d(\Vcal)_{\triv}).
\end{equation}

\section{\texorpdfstring{The stack of $G$-zips}{}}

\subsection{Preliminaries}\label{sec-prelim}

We fix an algebraic closure $k$ of $\FF_p$. For a $k$-scheme $X$, we denote by $X^{(p)}$ the $p$-power Frobenius twist of $X$. The relative Frobenius morphism is denoted by $\varphi_X\colon X \to X^{(p)}$ or simply $\varphi$. Let $G$ be a connected, reductive group over $\FF_p$ and let $\mu\colon \GG_{\mathrm{m},k}\to G_k$ be a cocharacter. We call the pair $(G,\mu)$ a cocharacter datum. The action of $\mu$ induces a decomposition $\Lie(G)=\bigoplus_{n\in \ZZ}\Lie(G)_n$, where $\Lie(G)_n$ is the subspace where $\GG_{\mathrm{m},k}$ acts via $x\mapsto x^n$ through $\mu$. This gives a pair of opposite parabolic subgroups $P_{\pm}\subset G_k$, uniquely characteriszed by the conditions
\begin{equation}\label{Pminplus}
    \Lie(P_-)=\bigoplus_{n\leq 0} \Lie(G)_n \quad \textrm{and} \quad \Lie(P_+)=\bigoplus_{n\geq 0} \Lie(G)_n.
\end{equation}
We set $P\colonequals P_-$ and $Q\colonequals P_+^{(p)}$. Let $L\colonequals \cent(\mu)$ be the centralizer of $\mu$, which is a Levi subgroup of $P$. Put $M\colonequals L^{(p)}$, a Levi subgroup of $Q$. Since $M=L^{(p)}$, we have the Frobenius homomorphism $\varphi\colon L\to M$. We call the tuple 
\[\Zcal_\mu \colonequals (G,P,Q,L,M)\]
the zip datum attached to $(G,\mu)$ (this terminology slightly differs from \cite[Definition 3.6]{Pink-Wedhorn-Ziegler-F-Zips-additional-structure}). If $L$ is defined over $\FF_p$, then $M=L^{(p)}=L$. This will be the case for all groups considered in the main part of this paper. Let $\theta_L^P\colon P\to L$ denote the projection onto the Levi subgroup $L$ modulo the unipotent radical $R_{\mathrm{u}}(P)$. Define $\theta_M^Q\colon Q\to M$ similarly. Put
\begin{equation}\label{eq-Edef}
    E\colonequals\{ (x,y)\in P\times Q \mid \varphi(\theta^P_L(x))=\theta_M^Q(y) \}.
\end{equation}
Let $E$ act on $G_k$ by the rule $(x,y)\cdot g\colonequals xgy^{-1}$. The stack of $G$-zips of type $\mu$ is the quotient stack
\begin{equation}\label{GZip-def-eq}
    \GZip^\mu \colonequals \left[E\backslash G_k\right].
\end{equation}
This stack has a modular interpretation in terms of certain torsors (\cite[Definition 1.4]{Pink-Wedhorn-Ziegler-F-Zips-additional-structure}). It is a smooth stack over $k$ whose underlying topological space is finite. The association $(G,\mu) \mapsto \GZip^\mu$ is functorial in the following sense. Let $(H,\mu_H)$ and $(G,\mu_G)$ be two cocharacter data and let $f\colon H\to G$ be a homomorphism defined over $\FF_p$ satisfying $\mu_G=f_k\circ \mu_H$. Then by \cite[\S2.1, \S2.2]{Goldring-Koskivirta-zip-flags}, $f$ induces a natural morphism of stacks $f_{\zip} \colon \HZip^{\mu_H}\to \GZip^{\mu_G}$ which makes the diagram below commute (where the vertical maps are the natural projections).
\begin{equation}\label{GZip-functor}
\xymatrix@M=7pt{
H \ar[r]^{f} \ar[d] & G \ar[d] \\
\HZip^{\mu_H}\ar[r]_{f_{\zip}} & \GZip^{\mu_G}.
}
\end{equation}

\subsection{Notation} \label{sec-notation}

Let $(G,\mu)$ be a cocharacter datum, and write $\Zcal_\mu=(G,P,Q,L,M)$ for the attached zip datum. We fix some group-theoretical data. 
\begin{bulletlist}
\item For simplicity, we always assume that there exists a Borel pair $(B,T)$ defined over $\FF_p$ satisfying $B\subset P$, $T\subset L$ and such that $\mu$ factors through $T$ (this condition can always be achieved after possibly changing $\mu$ to a conjugate cocharacter). In particular, we have an action of $\Gal(k/\FF_p)$ on $X^*(T)$ and $X_*(T)$. Let $B^+$ denote the opposite Borel to $B$ with respect to $T$ (i.e. the only Borel subgroup such that $B\cap B^+=T$). We write $\sigma\in \Gal(k/\FF_p)$ for the $p$-power Frobenius automorphism $x\mapsto x^p$.
\item Let $W=W(G_k,T)$ be the Weyl group of $G_k$. The group $\Gal(k/\FF_p)$ acts on $W$ and the actions of $\Gal(k/\FF_p)$ and $W$ on $X^*(T)$ and $X_*(T)$ are compatible in a natural sense.
\item Write $\Phi$ for the set of $T$-roots and $\Phi^+$ for the positive roots with respect to $B$ (in our convention, a root $\alpha$ is positive if the corresponding $\alpha$-root group $U_\alpha$ is contained in the opposite Borel $B^+$). Let $\Delta$ denote the set of simple roots.
\item For $\alpha \in \Phi$, let $s_\alpha \in W$ be the corresponding reflection. The system $(W,\{s_\alpha \mid \alpha \in \Delta\})$ is a Coxeter system.
\item Write $\ell  \colon W\to \NN$ for the length function, and $\leq$ for the Bruhat order on $W$. Let $w_0$ denote the longest element of $W$. 
\item Write $I\subset \Delta$ for the set of simple roots of $L$.
\item Let $X^*_{+,I}(T)$ denote the set of $I$-dominant characters, i.e. characters $\lambda\in X^*(T)$ such that $\langle \lambda,\alpha^\vee\rangle \geq 0$ for all $\alpha\in I$.
\item For a subset $K\subset \Delta$, let $W_K$ denote the subgroup of $W$ generated by $\{s_\alpha \mid \alpha \in K\}$. Write $w_{0,K}$ for the longest element in $W_K$.
\item Let ${}^KW$ denote the subset of elements $w\in W$ which have minimal length in the coset $W_K w$. Then ${}^K W$ is a set of representatives of $W_K\backslash W$. The longest element in the set ${}^K W$ is $w_{0,K} w_0$.
\item We set $z=\sigma(w_{0,I})w_0$. The triple $(B,T,z)$ is a $W$-frame, in the terminology of \cite[Definition 2.3.1]{Goldring-Koskivirta-zip-flags}.
\item For $w\in W$, fix a representative $\dot{w}\in N_G(T)$, such that $(w_1w_2)^\cdot = \dot{w}_1\dot{w}_2$ whenever $\ell(w_1 w_2)=\ell(w_1)+\ell(w_2)$ (this is possible by choosing a Chevalley system, \cite[ XXIII, \S6]{SGA3}). If no confusion occurs, we simply write $w$ instead of $\dot{w}$.
\item For $w,w'\in {}^I W$, write $w'\preccurlyeq w$ if there exists $w_1\in W_I$ such that $w'\leq w_1 w \sigma(w_1)^{-1}$. This defines a partial order on ${}^I W$ (\cite[Corollary 6.3]{Pink-Wedhorn-Ziegler-zip-data}).
\end{bulletlist}

\subsection{Parametrization of strata}\label{sec-param}

Using the notation explained above, we can now give the parametrization of $E$-orbits in $G_k$. For $w\in {}^I W$, define $G_w$ as the $E$-orbit of $\dot{w}\dot{z}^{-1}$.

\begin{theorem}[{\cite[Theorem 7.5]{Pink-Wedhorn-Ziegler-zip-data}}] \label{thm-E-orb-param} \ 
\begin{assertionlist}
\item Each $E$-orbit is smooth and locally closed in $G_k$.
\item The map $w\mapsto G_w$ is a bijection from ${}^I W$ onto the set of $E$-orbits in $G_k$.
\item For $w\in {}^I W$, one has $\dim(G_w)= \ell(w)+\dim(P)$.
\item The Zariski closure of $G_w$ is 
\begin{equation}\label{equ-closure-rel}
\overline{G}_w=\bigsqcup_{w'\in {}^IW,\  w'\preccurlyeq w} G_{w'}.
\end{equation}
\end{assertionlist}
\end{theorem}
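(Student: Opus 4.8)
This is \cite[Theorem 7.5]{Pink-Wedhorn-Ziegler-zip-data}. The plan would be to reduce the classification of $E$-orbits on $G_k$ to a statement about the Weyl group $W$, and then read off the dimension formula and the closure relation from the same analysis. I would begin with the observation that $E$ surjects onto $M$ with fibre $R_{\mathrm{u}}(P)\times R_{\mathrm{u}}(Q)$, whence $\dim E=\dim L+\dim R_{\mathrm{u}}(P)+\dim R_{\mathrm{u}}(Q)=\dim G_k$, using $\dim M=\dim L$ and $\dim R_{\mathrm{u}}(Q)=\dim R_{\mathrm{u}}(P)$. Thus $G_k$ is swept out by a dense $E$-orbit together with lower-dimensional ones; since orbit maps for algebraic group actions are smooth onto locally closed subschemes, this already gives part (1).

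\textbf{Normal form.} The core step is to prove that every $E$-orbit contains exactly one element of the form $\dot{w}\dot{z}^{-1}$ with $w\in{}^I W$. For existence I would use the Bruhat decomposition of $G_k$ relative to $(B,T)$ together with the left action of $P\supseteq B$ and the right action of $R_{\mathrm{u}}(Q)$ to bring an arbitrary $g\in G_k$ into a normal form; the defining constraint $\varphi(\theta_L^P(x))=\theta_M^Q(y)$ of $E$ couples the $L$-component of the left factor to the $M$-component of the right factor, and it is exactly this coupling that forces the resulting Weyl element into the minimal-length representatives ${}^I W$ and introduces the twist by $z=\sigma(w_{0,I})w_0$. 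For uniqueness I would compare, for two candidate representatives $\dot{w}\dot{z}^{-1}$ and $\dot{w}'\dot{z}^{-1}$ in the same orbit, the induced actions on the $T$-root groups to conclude $w=w'$. This is where the genuine work lies, and the main obstacle; everything else is essentially formal once the normal form is in hand. This gives part (2).

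\textbf{Dimension.} Setting $g_w\colonequals\dot{w}\dot{z}^{-1}$, the stabilizer $\Stab_E(g_w)$ is isomorphic to $\{x\in P\mid g_w^{-1}xg_w\in Q,\ \varphi(\theta_L^P(x))=\theta_M^Q(g_w^{-1}xg_w)\}$, and a direct root-group computation should show it has dimension $\dim R_{\mathrm{u}}(P)-\ell(w)$ (note $\ell(w)\leq\dim R_{\mathrm{u}}(P)$ for $w\in{}^I W$, with equality for $w=w_{0,I}w_0$). Together with $\dim E=\dim L+2\dim R_{\mathrm{u}}(P)$ this yields $\dim G_w=\dim E-\dim\Stab_E(g_w)=\ell(w)+\dim P$, i.e. part (3).

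\textbf{Closure relation.} Since $G_k$ is the disjoint union of the finitely many locally closed strata $G_w$, the closure of each is a union of strata. For $w'\preccurlyeq w$ I would produce the inclusion $G_{w'}\subseteq\overline{G}_w$ by explicit one-parameter degenerations inside $E\cdot g_w$: first conjugating $g_w$ by elements of $W_I$ to reach representatives indexed by $w_1w\sigma(w_1)^{-1}$, then degenerating further along the Bruhat order. A dimension count, together with the length-theoretic description of $\preccurlyeq$, should then show that no other strata occur, giving $\overline{G}_w=\bigsqcup_{w'\in{}^I W,\ w'\preccurlyeq w}G_{w'}$, which is part (4).
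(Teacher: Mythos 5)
The paper gives no proof of this statement at all: it is quoted verbatim from Pink--Wedhorn--Ziegler, \cite[Theorem 7.5]{Pink-Wedhorn-Ziegler-zip-data}, so there is nothing in the paper for your argument to be compared with except that citation. Judged on its own, your outline gets the soft parts right (part (1) is indeed the standard fact that orbits of a smooth algebraic group action are smooth and locally closed, and the count $\dim E=\dim L+\dim R_{\mathrm{u}}(P)+\dim R_{\mathrm{u}}(Q)=\dim G_k$ is correct), but the two places where the theorem actually lives are left as declarations of intent rather than arguments. For part (2), ``bring $g$ to a normal form via Bruhat decomposition and compare actions on root groups'' does not work as stated: the $E$-action is not compatible with $P\times Q$-double cosets in any naive way (those are indexed by $W_I\backslash W/W_J$, which is much coarser than ${}^IW$), and the genuinely hard points --- that $t\dot w\dot z^{-1}$ lies in the orbit of $\dot w\dot z^{-1}$ for $t\in T$, and more generally transitivity on each candidate piece --- require Lang's theorem applied to a twisted action of a Levi, i.e. they use in an essential way that $\varphi$ is a Frobenius isogeny (``orbital finiteness'' in PWZ's terminology). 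The actual proof is an induction over nested algebraic zip data attached to Levi subgroups, reducing the orbit classification for $(G,P,Q,\varphi)$ to a smaller zip datum on a Levi; your sketch never invokes either ingredient, and without them the uniqueness and transitivity claims are not established. This is a missing idea, not a routine verification.

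For part (4) there is a second gap: a dimension count together with the description of $\preccurlyeq$ cannot show that ``no other strata occur'' in $\overline{G}_w$. Dimension estimates never control which lower-dimensional strata lie in a closure, and the subtle point of the closure relation is precisely the twisted order $w'\leq w_1 w\sigma(w_1)^{-1}$ with $w_1\in W_I$; proving the inclusion $\overline{G}_w\subseteq\bigsqcup_{w'\preccurlyeq w}G_{w'}$ is a separate substantial argument in PWZ (again by induction over zip data, using properties of the Bruhat order), not a formal consequence of producing degenerations for the other inclusion. Also, as a minor point, the remark that $\dim E=\dim G_k$ ``already'' gives a dense orbit is unjustified --- openness of the top orbit needs finiteness of a stabilizer, which is again a Lang-type statement --- though nothing in your part (1) depends on it. In short: the skeleton is reasonable, but the core of (2) and the hard half of (4) are exactly the content of PWZ's theorem and are not supplied by the proposal.
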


For each $w\in {}^I W$, we define the quotient stack
\begin{equation}
    \Vcal_w \colonequals [E\backslash G_w].
\end{equation}
It is a locally closed substack of $\GZip^\mu=[E\backslash G_k]$. We call the substacks $\Vcal_w$ the zip strata of $\GZip^\mu$. We obtain a stratification $\GZip^\mu = \bigsqcup_{w\in {}^I W} \Vcal_w$ and the closure relations between strata are given by \eqref{equ-closure-rel}. 

\subsection{The ordinary locus}

In particular, the above description of $E$-orbits of $G_k$ shows that there is a unique open $E$-orbit
$U_\mu\subset G$ corresponding to the longest element $w_{0,I}w_0$ in ${}^I W$. We always have $1\in U_\mu$. The corresponding open substack 
\begin{equation}
\Ucal_\mu\colonequals \Vcal_{w_{0,I}w_0} = \left[E \backslash U_\mu \right]    
\end{equation}
is called the \emph{$\mu$-ordinary locus} of $\GZip^\mu$. This terminology come from the theory of Shimura varieties. Furthermore, when $P$ is defined over $\FF_p$, one simply calls $\Ucal_\mu$ the "ordinary locus" and denote it by $\Ucal^{\ord}$ (we also write $U^{\ord}$ for the open $E$-orbit $U_\mu$). Again, this terminology stems from Shimura varieties, see section \ref{sec-HT-SV} below for details. The stacks of $G$-zips that will appear in this paper satisfy that $P$ is defined over $\FF_p$. In this case, one can show (\cite{Koskivirta-Wedhorn-Hasse}) that the stabilizer in $E$ of the element $1\in G_k$ is of the form below:
\begin{equation}
    \Stab_E(1) = \{(x,x) \ | \ x\in L(\FF_p)\}.
\end{equation}
In particular, this group identifies with the \'{e}tale group scheme $L(\FF_p)$. We deduce that there is an isomorphism $E/L(\FF_p)\simeq U^{\ord}$. Hence, we deduce
\begin{equation}\label{Ucal-ord}
    \Ucal^{\ord} \simeq \left[ E\backslash E / L(\FF_p) \right] \simeq \left[ 1/L(\FF_p) \right] = B(L(\FF_p)).
\end{equation}
In particular, this shows that there exists a universal $L(\FF_p)$-principal bundle over $\Ucal^{\ord}$.

\subsection{\texorpdfstring{Vector bundles on $\GZip^\mu$}{}} \label{sec-vb-Gzip}

Let $\rho\colon P\to \GL(V)$ be an algebraic $P$-representation on a finite-dimensional $k$-vector space. We view $\rho$ as a representation of $E$ via the first projection $\pr_1\colon E\to P$. Since $\GZip^\mu =[E\backslash G_k]$, we can attach to $\rho$ a vector bundle $\Vcal(\rho)$, as explained in section \ref{sec-assoc-sheaf}. In particular, any algebraic $L$-representation can be viewed as a $P$-representation trivial on the unipotent radical of $P$, and hence gives rise to a vector bundle on $\GZip^\mu$. In this paper, we are mostly interested in a family of vector bundles $\Vcal_I(\lambda)$ parametrized by characters $\lambda\in X^*(T)$, that are related to the theory of vector-valued automorphic forms. For $\lambda\in X^*(T)$, view $\lambda$ as a one-dimensional representation of $B$, and define $V_I(\lambda)$ as the induced representation
\begin{equation}
    V_I(\lambda)\colonequals \ind_B^P(\lambda).
\end{equation}
Explicitly, $V_I(\lambda)$ is the $L$-representation whose elements consists of regular maps $f\colon P\to \AA^1$ satisfying the relation
\begin{equation}
    f(xb)=\lambda^{-1}(b)f(x), \quad x\in P, \ b\in B.
\end{equation}
The action of $L$ on $V_I(\lambda)$ is defined as follows: If $f\colon P\to \AA^1$ is an element of $V_I(\lambda)$ and $a\in L$, then $a\cdot f$ is the function $P\to \AA^1$, $x\mapsto f(a^{-1}x)$. The representation $V_I(\lambda)$ has highest weight $\lambda$ and the socle of $V_I(\lambda)$ is the unique irreducible representation of highest weight $\lambda$ over $k$. We are interested in the following set, that will prove to be closely related to automorphic forms in characteristic $p$:
\begin{equation} 
    C_{\zip} \colonequals \{\lambda\in X^*(T) \ | \ H^0(\GZip^\mu, \Vcal_I(\lambda)) \neq 0 \}.
\end{equation}
This is a subcone of $X^*(T)$ (i.e. an additive submonoid containing zero). For a subcone $C\subset X^*(T)$, define the saturation of $C$ as the set
\begin{equation}
    \Ccal\colonequals \{ \lambda\in X^*(T) \ | \ m\lambda\in C \ \textrm{for some } m\geq 1 \}.
\end{equation}
We always denote the saturation using the calligraphic letter $\Ccal$. For example, we denote by $\Ccal_{\zip}$ the saturation of $C_{\zip}$. Since $\Vcal_I(\lambda)=0$ when $\lambda$ is not $I$-dominant, we obviously have an inclusion
\begin{equation}
    \Ccal_\zip\subset X^*_{+,I}(T).
\end{equation}
We will see that the set $\Ccal_{\zip}$ controls the vanishing of cohomology of automorphic vector bundles in characteristic $p$. When $(G,\mu)$ is a cocharacter datum of Hasse-type (\cite[Definition 4.1.6]{Imai-Koskivirta-zip-schubert}), Imai and the author showed that the cone $\Ccal_{\zip}$ coincides with the cone $\Ccal_{\pha}$ spanned by the weights of partial Hasse invariants (\loccitn, Theorem 4.3.1). In general, the exact determination of $\Ccal_{\zip}$ is a difficult problem.

\subsection{\texorpdfstring{Global sections of vector bundles on $\GZip^\mu$}{}} \label{sec-glab-sec-Gzip}

For any algebraic $P$-representation $\rho\colon P\to \GL(V)$, the space of global sections $H^0(\GZip^\mu,\Vcal(\rho))$ was described in \cite{Imai-Koskivirta-vector-bundles} as the invariant part of the Brylinski--Kostant filtration under a certain finite algebraic subgroup of $L$. Since we are mainly interested in the case of automorphic vector bundle on Siegel-type Shimura varieties, we will assume for simplicity that $G$ is split over $\FF_p$ and that $\rho$ is an $L$-representation $\rho\colon L\to \GL(V)$. Under these assumptions, we can easily describe the space $H^0(\GZip^\mu,\Vcal(\rho))$ as follows. First, denote by $V^{L(\FF_p)}$ the subspace of $L(\FF_p)$-invariant vectors of $V$. Since $\Ucal^{\ord}$ identifies with $B(L(\FF_p))$ by \eqref{Ucal-ord}, we deduce that there is an identification
\begin{equation}
    H^0(\Ucal^{\ord},\Vcal(\rho)) = V^{L(\FF_p)}.
\end{equation}
In particular, $H^0(\GZip^\mu,\Vcal(\rho))$ identifies with a subspace of $V^{L(\FF_p)}$. On the other hand, consider the $T$-weight decomposition of $V$:
\begin{equation}
    V=\bigoplus_{\chi\in X^*(T)} V_\chi.
\end{equation}
Define the "non-positive part" of $V$ as the subspace
\begin{equation}\label{nonposV}
    V_{\leq 0}\colonequals \bigoplus_{\substack{\langle \chi,\alpha^\vee\rangle\leq 0 \\ \forall \alpha \in \Delta^P}} V_\chi.
\end{equation}
Then, we have the following result:
\begin{theorem}[{\cite[Theorem 3.7.2]{Koskivirta-automforms-GZip}, \cite[Theorem 3.4.1]{Imai-Koskivirta-vector-bundles}}]\label{thm-H0-zip}
One has an identification
\begin{equation}
    H^0(\GZip^\mu,\Vcal(\rho)) = V^{L(\FF_p)} \cap V_{\leq 0}.
\end{equation}
\end{theorem}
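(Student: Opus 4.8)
The plan is to work with the presentation $\GZip^\mu=[E\backslash G_k]$ directly. By section~\ref{sec-assoc-sheaf}, a global section of $\Vcal(\rho)$ is the same thing as a regular map $f\colon G_k\to V$ with $f(xgy^{-1})=\rho(\theta_L^P(x))f(g)$ for all $(x,y)\in E$ and $g\in G_k$. The first step is to restrict to the ordinary locus. Since $G$ is connected reductive, $G_k$ is irreducible and $U^{\ord}$ is a dense open subset of it; since $\GZip^\mu$ is smooth, $G_k$ is normal. Restriction to $U^{\ord}$ is therefore injective on global sections, and, as noted above, $H^0(\Ucal^{\ord},\Vcal(\rho))=V^{L(\FF_p)}$ via $f\mapsto f(1)$, the inverse sending $v\in V^{L(\FF_p)}$ to the map $f_v\colon U^{\ord}=E\cdot 1\to V$, $xy^{-1}\mapsto\rho(\theta_L^P(x))v$ (well defined precisely because $v$ is fixed by the diagonal copy of $L(\FF_p)$, the stabilizer of $1$; see \eqref{Ucal-ord}). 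Hence $H^0(\GZip^\mu,\Vcal(\rho))$ is identified with the set of $v\in V^{L(\FF_p)}$ for which $f_v$ extends to a regular map $G_k\to V$ (such an extension being automatically $E$-equivariant, since $U^{\ord}$ is dense in the irreducible $G_k$), and it remains to identify this set with $V^{L(\FF_p)}\cap V_{\leq 0}$.

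For this I would use normality of $G_k$: since $\Vcal(\rho)$ is locally free, $f_v$ extends to $G_k$ if and only if it is regular at the generic point of every prime divisor contained in $G_k\setminus U^{\ord}$. By Theorem \ref{thm-E-orb-param}(3)--(4) these prime divisors are the orbit closures $\overline{G}_w$ with $w\in{}^IW$ and $\ell(w)=\ell(w_{0,I}w_0)-1$, and the combinatorics of ${}^IW$ puts them in natural bijection with $\Delta^P=\Delta\setminus I$; write $\overline{G}_{w_\alpha}$ for the one attached to $\alpha\in\Delta^P$. The crux is a local computation at each $\overline{G}_{w_\alpha}$: one constructs a curve $\gamma_\alpha$ in $G_k$ of the form $t\mapsto u_{\beta}(t)\,\dot w_\alpha\dot z^{-1}$ for a suitable root group $u_\beta$, meeting $\overline{G}_{w_\alpha}$ transversally at $t=0$ and lying inside $U^{\ord}$ for $t\neq 0$. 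Writing $\gamma_\alpha(t)=x(t)\cdot 1\cdot y(t)^{-1}$ with $(x(t),y(t))\in E$ — which forces one to extract roots and hence to pass to a finite cover of the parameter line, harmless for a regularity check — and using the defining relation $\varphi(\theta_L^P(x))=\theta_M^Q(y)$ of $E$, one computes that $\theta_L^P(x(t))$ is, up to a $T$-valued factor invertible at $t=0$, of the form $\alpha^\vee(u(t))$ with $u(t)$ having a pole of positive order at $t=0$. Decomposing $v=\sum_\chi v_\chi$ into $T$-weight spaces then gives $f_v(\gamma_\alpha(t))=\sum_\chi (\text{unit})\cdot u(t)^{\langle\chi,\alpha^\vee\rangle}\,v_\chi$, which is regular at $t=0$ if and only if $\langle\chi,\alpha^\vee\rangle\leq 0$ for every $\chi$ with $v_\chi\neq 0$; by transversality this is equivalent to $f_v$ being regular at the generic point of $\overline{G}_{w_\alpha}$. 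Letting $\alpha$ range over $\Delta^P$, we conclude that $f_v$ extends to $G_k$ if and only if $v\in V_{\leq 0}$, which finishes the proof.

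I expect the local computation at the codimension-one strata to be the main obstacle: identifying the right $w_\alpha$ and root $\beta$ (via the closure relation \eqref{equ-closure-rel} and the explicit element $z=\sigma(w_{0,I})w_0$), checking transversality, and carrying out the $E$-factorization $\gamma_\alpha=x\cdot y^{-1}$ in $P\times Q$ with $Q=P_+^{(p)}$ precisely enough to pin down both the coroot $\alpha^\vee$ appearing in the exponent and its sign — so that the outcome is $V_{\leq 0}$ and not the analogous cone defined by the reversed inequalities. As a sanity check in rank one ($G=\GL_2$, $L=T$, $\Delta^P=\{\alpha\}$, $w_\alpha=1$): solving $u_\alpha(t)\dot z^{-1}=x(t)\,y(t)^{-1}$ with $x$ lower triangular, $y$ upper triangular and $\varphi(\theta_L^P(x))=\theta_M^Q(y)$ forces $\theta_L^P(x(t))=\alpha^\vee(u(t))$ with $u(t)^{p-1}=-t^{-1}$, and regularity at $t=0$ of $\sum_\chi u(t)^{\langle\chi,\alpha^\vee\rangle}v_\chi$ is exactly the condition defining $V_{\leq 0}$. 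Everything else — density, normality, equivariance of the extension, and the reduction to codimension one — is formal.
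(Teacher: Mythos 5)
The paper does not actually prove Theorem \ref{thm-H0-zip}: it imports it from \cite{Koskivirta-automforms-GZip} and \cite{Imai-Koskivirta-vector-bundles}. Your overall strategy --- identify sections with $E$-equivariant maps $G_k\to V$, restrict to the dense open orbit $U^{\ord}$ to get $V^{L(\FF_p)}$ via \eqref{Ucal-ord}, and test extendability in codimension one by explicit one-parameter degenerations toward the codimension-one strata --- is in substance the strategy of those cited proofs, and the formal ingredients you list (injectivity of restriction, $H^0(\Ucal^{\ord},\Vcal(\rho))=V^{L(\FF_p)}$, automatic $E$-equivariance of an extension, reduction to codimension one via normality and local freeness of $\Vcal(\rho)$) are all correct.

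The genuine gap is that the decisive step is asserted rather than proved in rank $>1$. For each $w\in{}^IW$ with $\ell(w)=\ell(w_{0,I}w_0)-1$ you must actually (i) produce the root $\beta$ and the factorization $u_\beta(t)\dot w\dot z^{-1}=x(t)y(t)^{-1}$ with $(x(t),y(t))\in E$ for $t\neq 0$, and (ii) show that $\theta_L^P(x(t))$ is, modulo a factor regular and invertible at $t=0$, of the form $\alpha^\vee(u(t))$ with $u$ having a pole and $\alpha^\vee$ a \emph{simple} coroot, $\alpha\in\Delta\setminus I$, and that as the strata vary one obtains exactly the conditions cutting out $V_{\leq 0}$. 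The cardinalities do match (Poincar\'e duality for the length generating function of $W_I\backslash W$ gives $|\Delta\setminus I|$ strata of codimension one), but nothing in the proposal pins down the cocharacter: a priori the transversal cocharacter at a given stratum could be a non-simple or $W_I$-translated coroot, and recovering precisely $V_{\leq 0}$ may require combining the raw condition with the fact that the weight support of an $L(\FF_p)$-invariant vector is $W_I$-stable. Your $\GL_2$ computation is a correct sanity check but does not justify the general claim; this is exactly where the content of the theorem lies.

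A second, smaller issue: ``by transversality this is equivalent to $f_v$ being regular at the generic point of $\overline G_{w}$'' only gives one implication as stated. If $\val_{\overline G_w}(f_v)<0$, the pullback along your particular curve could still be regular at $t=0$ if the leading coefficient of $f_v$ along the divisor happens to vanish at the base point $\dot w\dot z^{-1}$. This is repaired by equivariance: $G_w$ is a single $E$-orbit through $\dot w\dot z^{-1}$, $f_v$ is $E$-equivariant, and replacing your curve $\gamma$ by $e\cdot\gamma$ for $e\in E$ changes its value only by the fixed invertible operator $\rho(\theta_L^P(\pr_1(e)))$, hence does not change the order at $t=0$; choosing $e$ so that $e\cdot\dot w\dot z^{-1}$ is a general point of the stratum shows the order along your curve computes the divisorial valuation exactly. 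With (i)--(ii) carried out and this equivariance argument added, your plan becomes a complete proof, essentially the one in the references.
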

In particular, we can interpret the set $C_{\zip}$ as the set of $\lambda\in X^*(T)$ such that the intersection $V_I(\lambda)^{L(\FF_p)} \cap V_I(\lambda)_{\leq 0}$ is nonzero. This interpretation makes it possible to use tools from representation theory to study the set $\Ccal_{\zip}$. We hope to investigate this approach in future work.

\section{Siegel and Hilbert--Blumenthal Shimura varieties}

\subsection{Siegel modular varieties} \label{Siegel-type-sec}

Let $n\geq 1$ be an integer and let $\Lambda=\ZZ^{2n}$, endowed with the symplectic pairing $\Psi\colon \Lambda\times \Lambda \to \ZZ$ defined by the $2n\times 2n$ symplectic matrix
\begin{equation}
   \left( \begin{matrix}
        & J_0 \\ -J_0 &
    \end{matrix} \right), \quad  \textrm{where} \quad \quad J_0=
    \left( \begin{matrix}
        & &1 \\  & \iddots & \\ 1& &
    \end{matrix} \right) \in \GL_n(\ZZ).
\end{equation}
Let $\GSp_{2n}$ be the reductive group over $\ZZ$ such that for any $\ZZ$-algebra $R$, we have
\begin{equation}
    \GSp_{2n}(R)=\left\{ g\in \GL_R(\Lambda \otimes_\ZZ R) \ \relmiddle| \ \exists c(g)\in R^\times, \ \Psi_R(gx,gy) = c(g) \Psi_R(x,y) \right\}.
\end{equation}
The map $\GSp_{2n}\to \GG_{\mathrm{m}}$, $g\mapsto c(g)$ is the multiplier character. The subgroup defined by the condition $c(g)=1$ is denoted by $\Sp_{2n}$. Let $\Hcal_n^{\pm}$ be the set of symmetric complex matrices of size $n\times n$ whose imaginary part is positive definite or negative definite. The pair $(\GSp_{2n,\QQ},\Hcal_{n}^{\pm})$ is called a Shimura datum of Siegel-type.

We fix a prime number $p$ and set $K_p\colonequals \GSp_{2n}(\ZZ_p)$. It is a hyperspecial subgroup of $\GSp_{2n}(\QQ_p)$. For any open compact subgroup $K^p\subset \GSp_{2n}(\AA^p_f)$, write $K=K_p K^p \subset \GSp_{2n}(\AA_f)$ and let $\Acal_{n,K}$ be the stack over $\ZZ_{(p)}$ such that for any $\ZZ_{(p)}$-scheme $S$, the $S$-valued points of $\Acal_{n,K}$ parametrize the triples $(A,\chi, \overline{\eta})$ satisfying the following conditions:
\begin{bulletlist}
    \item $A$ is an abelian scheme of relative dimension $n$ over $S$.
    \item $\chi\colon A\to A^\vee$ is a $\ZZ_{(p)}$-multiple of a polarization whose degree is prime to $p$.
    \item $\eta\colon \Lambda \otimes_{\ZZ} \AA^p_f \to H^1(A,\AA^p_f)$ is an isomorphism of sheaves of $\AA^p_f$-modules on $S$. We impose that $\eta$ is compatible with the symplectic pairings induced by $\Psi$ and $\chi$. We write $\overline{\eta}=\eta K^p$ for the $K^p$-orbit of $\eta$.
\end{bulletlist}
For $K^p$ small enough, $\Acal_{n,K}$ is a smooth, quasi-projective $\ZZ_{(p)}$-scheme of relative dimension $\frac{n(n+1)}{2}$. We will always make this assumption on $K^p$. We fix an algebraic closure $k=\overline{\FF}_p$ and define:
\begin{equation}
    \overline{\Acal}_{n,K} \colonequals \Acal_{n,K}\otimes_{\ZZ_{(p)}} k.
\end{equation}
We write $\Acal_n$ and $\overline{\Acal}_n$ for the moduli stacks without level structure, parametrizing pairs $(A,\chi)$ as above. We let $\Ascr\to \Acal_{n,K}$ denote the universal abelian scheme over $\Acal_{n,K}$ and $e\colon \Acal_{n,K}\to \Ascr$ the unit section. Define the Hodge bundle of $\Acal_{n,K}$ by
\begin{equation}\label{Omega}
    \Omega\colonequals e^*(\Omega^1_{\Ascr/\Acal_{n,K}}).
\end{equation}
We let $\omega\colonequals \bigwedge^n \Omega$ denote the determinant of $\Omega$ and call it the Hodge line bundle.


\subsection{Hodge-type Shimura varieties}\label{sec-HT-SV}

More generally, let $\gx$ be a Shimura datum of Hodge-type \cite[2.1.1]{Deligne-Shimura-varieties}, where $\mathbf{G}$ is a connected, reductive group over $\QQ$. By definition, there is an embedding of Shimura data $\gx\to (\GSp_{2n,\QQ},\Hcal_n^{\pm})$ for some $n\geq 1$. The symmetric domain $\mathbf{X}$ gives rise to a well-defined $\mathbf{G}(\overline{\QQ})$-conjugacy class of cocharacters $[\mu]$ of $\mathbf{G}_{\overline{\QQ}}$. Write $\mathbf{E}=\mathbf{E}(\mathbf{G},\mathbf{X})$ for the reflex field of $\gx$ (i.e. the field of definition of $[\mu]$). For any open compact subgroup $K \subset \mathbf{G}(\AA_f)$, let $\Sh_{K}(\mathbf{G},\mathbf{X})$ be Deligne's canonical model (\cite{Deligne-Shimura-varieties}) at level $K$ defined over $\mathbf{E}$. When $K$ is small enough, $\Sh_{K}(\mathbf{G},\mathbf{X})$ is a smooth, quasi-projective scheme over $\mathbf{E}$, and its $\CC$-valued points are given by
\begin{equation}
    \Sh_{K}(\mathbf{G},\mathbf{X})(\CC) = \mathbf{G}(\QQ) \backslash \left(\mathbf{G}(\AA_f)/K \times \mathbf{X} \right).
\end{equation}
Let $p$ be a prime of good reduction. By this, we mean that $K$ can be written in the form $K=K_p K^p$ where $K_p\subset \mathbf{G}(\QQ_p)$ is hyperspecial and $K^p\subset \mathbf{G}(\AA_f^p)$ is open compact. We will only consider open compact subgroups of this form. In particular, the group $\mathbf{G}_{\QQ_p}$ is unramified and there exists a reductive $\ZZ_p$-model $\mathbf{G}_{\ZZ_p}$ such that $K_p=\mathbf{G}_{\ZZ_p}(\ZZ_p)$. For any place $\pfr|p$ in $\mathbf{E}$, Kisin (\cite{Kisin-Hodge-Type-Shimura}) and Vasiu (\cite{Vasiu-Preabelian-integral-canonical-models}) constructed a smooth canonical model $\Scal_K$ of $\Sh_{K}(\mathbf{G},\mathbf{X})$ over $\Ocal_{\mathbf{E}_\pfr}$. The embedding $\gx\to (\GSp_{2n,\QQ},\Hcal_n^{\pm})$ induces an embedding $\Sh_K(\mathbf{G},\mathbf{X})\to \Acal_{n,K_0}\otimes_{\ZZ_{(p)}}\mathbf{E}_{\pfr}$ for a compatible choice of open compact subgroups $K^p_0\subset \GSp_{2n}(\AA^p_f)$, $K^p\subset \mathbf{G}(\AA_f^p)$. By \cite[Theorem 2.4.8]{Kisin-Hodge-Type-Shimura}, $\Scal_K$ can be constructed as the normalization of the scheme-theoretical image of $\Sh_K(\mathbf{G},\mathbf{X})$ inside $\Acal_{n,K_0}$. Put
\begin{equation}
    S_{K}\colonequals \Scal_K\otimes_{\Ocal_{\mathbf{E}_\pfr}} k.
\end{equation}
Let $K'^p\subset K^p$ be two compact open subgroups of $\mathbf{G}(\AA_f^p)$. There is a natural projection morphism
\begin{equation}\label{change-level-pi}
    \pi_{K',K}\colon \Scal_{K'}\to \Scal_K
\end{equation}
which is finite \'{e}tale. We call $\pi_{K',K}$ the change-of-level map.

We can choose a representative $\mu\in [\mu]$ defined over $\mathbf{E}_\pfr$ by \cite[(1.1.3) Lemma (a)]{Kottwitz-Shimura-twisted-orbital}. 
We can also assume that $\mu$ extends to a cocharacter $\mu \colon \GG_{\mathrm{m},\Ocal_{\mathbf{E}_\pfr}}\to \mathbf{G}_{\ZZ_p}\otimes_{\ZZ_p} \Ocal_{\mathbf{E}_\pfr}$ (\cite[Corollary 3.3.11]{Kim-Rapoport-Zink-uniformization}). Denote by $\mathbf{L} \subset \mathbf{G}_{\mathbf{E}_\pfr}$ the centralizer of the cocharacter $\mu$. Denote by $\mathbf{P}$ the parabolic subgroup of $\mathbf{G}_{\mathbf{E}_\pfr}$ attached to $\mu$, as explained in section \ref{sec-prelim}. Then $\mathbf{L}$ is a Levi subgroup of $\mathbf{P}$. Since $\mathbf{G}_{\QQ_p}$ is unramified, it is quasi-split, so admits a Borel subgroup. We assume that there is a Borel subgroup $\mathbf{B}\subset \mathbf{G}_{\QQ_p}$ contained in $\mathbf{P}$ and a maximal torus $\mathbf{T}\subset \mathbf{B}\cap \mathbf{L}$ such that $\mu$ factors througs $T$. This can always be achieved after changing $\mu$ to a conjugate cocharacter (see \cite[\S 2.5]{Imai-Koskivirta-vector-bundles}).

By properness of the scheme of parabolic subgroups of $\mathbf{G}_{\ZZ_p}$ (\cite[Expos\'{e} XXVI, Corollaire 3.5]{SGA3}), the subgroups $\mathbf{B}$ and $\mathbf{P}$ extend uniquely to subgroups $\mathbf{B}_{\ZZ_p} \subset \mathbf{G}_{\ZZ_p}$ and $\mathbf{P}_{\Ocal_{\mathbf{E}_\pfr}} \subset \mathbf{G}_{\ZZ_p}\otimes_{\ZZ_p} \Ocal_{\mathbf{E}_\pfr}$ respectively. Similarly, let 
$\mathbf{T}_{\ZZ_p}$ be the unique extension of $\mathbf{T}$. Set $G = \mathbf{G}_{\ZZ_p} \otimes_{\ZZ_p} \FF_p$ and denote by $B, T, P$ the special fiber of $\mathbf{B}_{\ZZ_p}, \mathbf{T}_{\ZZ_p}, \mathbf{P}_{\Ocal_{\mathbf{E}_\pfr}}$ respectively. By slight abuse of notation, we denote again by $\mu$ the mod $p$ reduction $\mu \colon \GG_{\mathrm{m},k}\to G_k$. Write $L\subset G_{k}$ for its centralizer. By construction $\Sh_K(\mathbf{G},\mathbf{P})$ is endowed with a natural $\mathbf{P}$-torsor $\Ical_{\mathbf{P}}$ that extends to a $\mathbf{P}_{\Ocal_{\mathbf{E}_\pfr}}$-torsor over $\Scal_K$. Write $I_P$ for the $P$-torsor induced on the special fiber $S_K$.

\subsection{Hilbert--Blumenthal Shimura varieties} \label{hb-var-sec}
Let $\mathbf{F}/\QQ$ be a totally real extension of degree $n$. Write $\mathbf{H}$ for the reductive $\QQ$-group whose $R$-points (for any $\QQ$-algebra $R$) are given by
\begin{equation}
    \mathbf{H}(R)=\left\{ g\in \GL_2(\mathbf{F}\otimes_\QQ R) \ \relmiddle| \ \det(g)\in R^\times \right\}.
\end{equation}
It is a subgroup of the Weil restriction $\Res_{\mathbf{F}/\QQ}(\GL_{2,\mathbf{F}})$. We assume that $p$ is unramified in $\mathbf{F}$. In this case, the group $\mathbf{H}$ is unramified at $p$ and the lattice $\Ocal_{\mathbf{F}}\otimes_{\ZZ}\ZZ_p \subset \mathbf{F}\otimes_{\QQ}\QQ_p$ gives rise to a reductive $\ZZ_p$-model $\mathbf{H}_{\ZZ_p}$ of $\mathbf{H}_{\QQ_p}$. We set $K'_p\colonequals \mathbf{H}_{\ZZ_p}(\ZZ_p)$, which is a hyperspecial subgroup of $\mathbf{H}(\QQ_p)$. For any open compact subgroup $K'^p \subset \mathbf{H}(\AA_f^p)$, write $K'\colonequals K'_p K'^{p}$. The Hilbert--Blumenthal Shimura variety $\Xcal_{\mathbf{F},K'}$ of level $K'$ is the $\ZZ_{(p)}$-stack whose $S$-valued points (for any $\ZZ_{(p)}$-scheme $S$) parametrize quadruples $(A,\chi, \iota, \overline{\eta})$ as follows:
\begin{bulletlist}
    \item $A$ is an abelian scheme of relative dimension $n$ over $S$.
    \item $\chi\colon A\to A^\vee$ is a $\ZZ_{(p)}$-multiple of a polarization of degree prime to $p$.
    \item $\iota\colon \Ocal_\mathbf{F} \otimes_\ZZ \ZZ_{(p)} \to \End(A)\otimes_\ZZ \ZZ_{(p)}$ is a ring homomorphism.
    \item $\eta\colon (\mathbf{F} \otimes_{\QQ} \AA^p_f)^2 \to H^1(A,\AA^p_f)$ is an $\mathbf{F}$-linear isomorphism of sheaves of $\AA^p_f$-modules on $S$, and $\overline{\eta}=\eta K'^p$ is the $K'^p$-orbit of $\eta$.
\end{bulletlist}
The homomorphism $\iota$ yields an action of $\Ocal_{\mathbf{F}}$ on the dual abelian variety $A^\vee$. We impose that the polarization $\chi$ be $\Ocal_\mathbf{F}$-linear for this action.

Next, we construct a morphism $\Xcal_{\mathbf{F},K'}\to \Acal_{n,K}$ for an appropriate choice of level structures $K'^p\subset \mathbf{H}(\AA_f^p)$ and $K^p\subset \GSp(\AA_f^p)$. Write $\mathbf{G}\colonequals \GSp_{2n,\QQ}$ and $\mathbf{G}_{\ZZ_p}=\GSp_{2n,\ZZ_p}$. First, we construct an embedding $\mathbf{H} \to \mathbf{G}$ of reductive $\QQ$-groups. Consider the symplectic form
\begin{equation}
    \Psi_{0}\colon \mathbf{F}^2\times \mathbf{F}^2\to \QQ, \quad ((x_1,y_1),(x_2,y_2)) \ \mapsto \ \Tr_{\mathbf{F}/\QQ}(x_1y_2-x_2y_1).
\end{equation}
For any matrix $A\in \GL_2(\mathbf{F})$ such that $\det(A)\in \QQ^\times$ and $x,y\in \mathbf{F}^2$, one has
\begin{equation}
    \Psi_0(Ax,Ay)=\det(A)\Psi_0(x,y),
\end{equation}
which shows that $\mathbf{H}\subset \GSp(\Psi_0)$. Fix an isomorphism $\gamma\colon (\mathbf{F}^2,\Psi_0)\to (\Lambda\otimes_{\ZZ}\QQ, \Psi)$ of symplectic spaces over $\QQ$. We obtain an embedding of reductive $\QQ$-groups
\begin{equation}\label{u-embedQ}
u\colon \mathbf{H} \to \mathbf{G}, \quad f\mapsto \gamma \circ f \circ \gamma^{-1}.
\end{equation}
Since $p$ is unramified in $\mathbf{F}$, we may further assume that $\gamma$ induces an isomorphism $\Ocal_{\mathbf{F}}\otimes_{\ZZ}\ZZ_p\to \Lambda\otimes_{\ZZ} \ZZ_p$. Then, $u_{\QQ_p}$ extends to an embedding $u_{\ZZ_p}\colon \mathbf{H}_{\ZZ_p} \to \mathbf{G}_{\ZZ_p}$ of reductive $\ZZ_{p}$-groups. For any compact open subgroups $K'^p\subset \mathbf{H}(\AA_f^p)$ and $K^p\subset \mathbf{G}(\AA_f^p)$ such that $u(K'^p)\subset K^p$, there is a natural morphism of $\ZZ_{(p)}$-schemes
\begin{equation}\label{embed-HB-S}
\widetilde{u}_{K',K} \colon \Xcal_{\mathbf{F},K'} \to \Acal_{n,K}
\end{equation}
defined as follows. Let $x=(A,\chi,\iota,\overline{\eta})$ be a point of $\Xcal_{\mathbf{F},K'}$, where $\overline{\eta}=\eta K'^p$ and $\eta\colon (\mathbf{F} \otimes_{\QQ} \AA^p_f)^2 \to H^1(A,\AA^p_f)$ is an $\mathbf{F}$-linear isomorphism. Then $\widetilde{u}_{K',K}$ sends $x$ to the point $(A,\chi, (\eta\circ \gamma^{-1})K^p)$ of $\Acal_{n,K}$ (the isomorphism $\eta\circ \gamma^{-1}\colon \Lambda\otimes_{\ZZ} \AA^p_f \to H^1(A,\AA^p_f)$ is compatible with the symplectic forms, and thus gives rise to a level structure). Furthermore, we may choose $K', K$ so that $\widetilde{u}_{K',K}$ is a closed embedding, but this will not be necessary for our purpose.

\subsection{\texorpdfstring{$G$-zips and Shimura varieties of Hodge-type}{}}\label{sec-GZip-HT}

We return to the setting of section \ref{sec-HT-SV}, where $S_K=\Scal_K\otimes_{\Ocal_{\mathbf{E}_\pfr}} k$ is the special fiber of a Hodge-type Shimura variety at a place of good reduction. The cocharacter datum $(G,\mu)$ gives rise to a zip datum $\Zcal_\mu=(G,P,Q,L,M)$. Zhang (\cite[4.1]{Zhang-EO-Hodge}) constructed a smooth morphism
\begin{equation}\label{zeta-Shimura}
\zeta_K \colon S_{K}\to \GZip^\mu,
\end{equation}
whose fibers are called the Ekedahl--Oort strata of $S_K$. This map is also surjective by \cite[Corollary 3.5.3(1)]{Shen-Yu-Zhang-EKOR}. The map $\zeta_K$ commutes with change-of-level maps, in the sense that for any compact open subgroups $K'^p\subset K^p \subset \mathbf{G}(\AA_f^p)$, there is a commutative diagram
\begin{equation}
\xymatrix@M=5pt{
     S_{K'} \ar[rd]^{\zeta_{K'}} \ar[dd]_{\pi_{K',K}} & \\
     & \GZip^\mu \\
     S_{K} \ar[ru]_{\zeta_K}
     }
\end{equation}
We often omit the subscript $K$ and simply denote these maps by $\zeta$. For $w\in {}^I W$, write
\begin{equation}
    S_{K,w}\colonequals \zeta^{-1}(\Vcal_{w})
\end{equation}
for the corresponding Ekedahl--Oort stratum. The Ekedahl--Oort stratum $S_{K,w_{0,I}w_0}=\zeta^{-1}(\Ucal_\mu)$ corresponding to the maximal element $w_{0,I}w_0\in {}^I W$ is called the $\mu$-ordinary locus. When $\mathbf{E}_\pfr=\QQ_p$, this is simply the classical ordinary locus of $S_K$, we denote it by $S_K^{\ord}$. It is the set of points $x\in S_K$ whose image in $\Ascr_{n,K_0}\otimes_{\ZZ_{(p)}} k$ corresponds to an ordinary abelian variety. $S^{\ord}_{K}$ is nonempty if and only if $\mathbf{E}_{\pfr}=\QQ_p$.

\subsubsection*{The Siegel case}\label{siegel-zip-sec}
We consider the case $G=\GSp_{2n,\FF_p}$ (explained in section \ref{Siegel-type-sec}). We endow $G$ with the cocharacter
\begin{equation}
    \mu_G\colon \GG_{\mathrm{m},k} \to G_k, \quad t\mapsto \left(
\begin{matrix}
    t I_n & \\ & I_n
\end{matrix}
    \right).
\end{equation}
Let $(u_1,\dots, u_{2n})$ denote the canonical basis of $V = k^{2n}$, and set \[V_P\colonequals \Span_k(u_{n+1},\dots, u_{2n}).\]
Let $P$ be the parabolic subgroup which stabilizes the filtration $0\subset V_P \subset V$. Similarly, let $Q$ be the parabolic subgroup opposite to $P$ stabilizing the subspace $V_Q\colonequals \Span_k(u_1,\dots, u_n)$. The intersection $L\colonequals P\cap Q$ is a common Levi subgroup to $P$ and $Q$. The zip datum attached to $(G,\mu)$ is $\Zcal_\mu = (G,P,Q,L,L)$. Let $E\subset P\times Q$ be the zip group defined in \eqref{eq-Edef} and let $\GZip^{\mu_G}\colonequals [E\backslash G_k]$ be the attached stack of $G$-zips. Let $B\subset G$ be the Borel subgroup of lower-triangular matrices in $G$, and $T\subset B$ the maximal torus consisting of diagonal matrices in $G$. The Weyl group $W=W(G,T)$ is isomorphic to $(\ZZ/2\ZZ)^n \rtimes \Sfr_n$ and can be identified with the group of permutations $w\in \Sfr_{2n}$ satisfying 
\begin{equation}\label{SpW-eq}
    w(i)+w(2n+1-i)=2n+1.
\end{equation}
The Weyl group $W_L=W(L,T)$ is isomorphic to $\Sfr_n$ and identifies with the subgroup of permutations in $\Sfr_{2n}$ satisfying \eqref{SpW-eq} and stabilizing the subset $\{1,\dots, n\}$. 
The torus $T$ is generated by the subgroup $Z\subset T$ of nonzero scalar matrices and the subtorus $T_0\colonequals T\cap \Sp_{2n}$. Explicitly, $T_0$ is given by
\begin{equation}
    T_0 = \{ \diag(t_1,\dots, t_n,t_n^{-1},\dots,t_1^{-1}) \ | \ t_i\in \GG_{\mathrm{m}} \}.
\end{equation}
Identify $X^*(T_0)=\ZZ^n$ such that $(a_1,\dots,a_n)$ corresponds to the character
\begin{equation}\label{charT-Sp}
    \diag(t_1,\dots, t_n,t_n^{-1},\dots,t_1^{-1})\mapsto \prod_{i=1}^n t_i^{a_i}.
\end{equation}
We also identify $X^*(Z)=\ZZ$, such that $b\in \ZZ$ corresponds to the character $\diag(t,\dots,t)\mapsto t^b$. The natural map $X^*(T)\to X^*(T_0)\times X^*(Z)=\ZZ^n\times \ZZ$, $\chi\mapsto (\chi|_{T_0},\chi|_Z)$ is injective, and identifies $X^*(T)$ with the set
\begin{equation}\label{XT-GSp}
    X^*(T) = \left\{(a_1,\dots,a_n , b)\in \ZZ^{n+1} \ \relmiddle| \ \sum_{i=1}^n a_i \equiv b \pmod 2 \right\}.
\end{equation}
By section \ref{sec-GZip-HT}, we have a smooth surjective morphism
\begin{equation}
    \zeta_G\colon \overline{\Acal}_{n,K}\to \GZip^{\mu_G}
\end{equation}
whose fibers are the Ekedahl--Oort strata of $\overline{\Acal}_{n,K}$.

It was shown by Ekedahl--van der Geer that the closure of any Ekedahl--Oort stratum of $\overline{\Acal}_n$ which is not contained in the supersingular locus is irreducible in $\overline{\Acal}_n$ (\cite[Theorem 11.5]{Ekedahl-Geer-EO}). Since $\overline{\Acal}_{n,K}$ may be disconnected, this statement does not hold for $\overline{\Acal}_{n,K}$. However, considering the natural map $\overline{\Acal}_{n,K}\to \overline{\Acal}_{n}$, we see that it holds if we restrict ourselves to a connected component $S\subset \overline{\Acal}_{n,K}$. Write $S^{\ord}$ for the ordinary locus of $S$. In particular, we have the following:
\begin{proposition}\label{prop-non-ord}
Fix a connected component $S\subset \overline{\Acal}_{n,K}$ and assume that $n\geq 2$. Then the non-ordinary locus $S\setminus S^{\ord}$ is irreducible.
\end{proposition}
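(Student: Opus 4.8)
The plan is to show that the closed subset $Z\colonequals S\setminus S^{\ord}$ is the Zariski closure in $S$ of a single Ekedahl--Oort stratum, namely the unique one of codimension $1$; its irreducibility will then follow at once from the irreducibility statement for non‑supersingular Ekedahl--Oort strata recalled just before the statement of Proposition~\ref{prop-non-ord}.

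First I would pin down the unique codimension-$1$ Ekedahl--Oort stratum of $S$. Since $\zeta_G$ is smooth and surjective, Theorem~\ref{thm-E-orb-param}(3) gives that $S_{K,w}\cap S$ has codimension $\frac{n(n+1)}{2}-\ell(w)$ in $S$ for every $w\in{}^I W$, so the codimension-$1$ strata correspond to the length-$(N-1)$ elements of ${}^I W$, where $N\colonequals\frac{n(n+1)}{2}=\ell(w_{0,I}w_0)=\dim S$. The key combinatorial input is that the length‑generating function $\sum_{w\in{}^I W}q^{\ell(w)}=W(q)/W_I(q)$, being a quotient of the palindromic Poincar\'{e} polynomials of $W$ and $W_I$, is itself palindromic of degree $N$; hence the number of length-$(N-1)$ elements equals the number of length-$1$ elements, which is $|\Delta\setminus I|=1$ since $I$ has corank one in $\Delta$ for the Siegel group. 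Let $w_1$ be this unique element and set $V\colonequals S_{K,w_1}\cap S$. As $n\geq 2$, $\dim V=N-1$ is strictly larger than the dimension $\lfloor n^2/4\rfloor$ of the supersingular locus, so $V$ is not contained in it, and by the discussion preceding Proposition~\ref{prop-non-ord} the closure $\overline V$ is irreducible of codimension $1$ in $S$.

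Next I would verify that $Z$ is pure of codimension $1$. This is immediate from the Hasse invariant: $S^{\ord}$ is the non‑vanishing locus of $\Ha\in H^0(S,\omega^{p-1})$, and $\Ha\neq 0$ because $S^{\ord}\neq\varnothing$ (the Siegel datum has reflex field $\QQ$, so $\mathbf{E}_\pfr=\QQ_p$ and the ordinary locus is nonempty). Since $S$ is smooth, $Z$ is the support of the nonzero effective Cartier divisor $\div(\Ha)$, so every irreducible component of $Z$ has codimension exactly $1$ in $S$.

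Finally I would combine the two steps. Let $Z'$ be an irreducible component of $Z$; by the previous paragraph $\dim Z'=N-1$. Its generic point lies in exactly one stratum $S_{K,w}\cap S$, and $Z'\subseteq\overline{S_{K,w}\cap S}$ then forces $\ell(w)\geq N-1$; since $S_{K,w_{0,I}w_0}\cap S=S^{\ord}$ is disjoint from $Z$, we must have $w=w_1$, so the generic point of $Z'$ lies in $V$ and hence $Z'\subseteq\overline V$. As $\overline V$ is irreducible and contained in $Z$ it is contained in some irreducible component of $Z$, and maximality of the component $Z'$ then gives $Z'=\overline V$. Thus every irreducible component of $Z$ equals $\overline V$, i.e. $Z=\overline V$ is irreducible. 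The step I expect to be the main obstacle --- indeed the only non‑formal ingredient --- is the irreducibility of $\overline V$, which rests on Ekedahl--van der Geer's theorem together with the finite \'{e}tale descent along $\overline{\Acal}_{n,K}\to\overline{\Acal}_n$ (this is precisely where one must pass to a connected component, and where the hypothesis $n\geq 2$ is genuinely used, both to keep $V$ out of the supersingular locus and to force the uniqueness of the codimension-$1$ stratum). The remaining steps are routine dimension bookkeeping.
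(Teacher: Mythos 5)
Your proof is correct and follows essentially the same route as the paper: the paper's own argument is simply that for $n\geq 2$ the locus $S\setminus S^{\ord}$ is the closure of a single Ekedahl--Oort stratum not contained in the supersingular locus, so the Ekedahl--van der Geer irreducibility statement (transferred to connected components of $\overline{\Acal}_{n,K}$ as discussed just before the proposition) applies. You merely make explicit the points the paper leaves implicit --- uniqueness of the codimension-one stratum, purity of the Hasse divisor, and the dimension comparison with the supersingular locus --- all of which are fine.
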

\begin{proof}
The assumption $n\geq 2$ ensures that $S\setminus S^{\ord}$ is the closure of an Ekedahl--Oort stratum that is not contained in the supersingular locus. The result follows.
\end{proof}

The local ring at the irreducible divisor $S\setminus S^{\ord}$ is a discrete valuation ring. The classical Hasse invariant is a section
\begin{equation}
    \Ha\in H^0(\overline{\Acal}_{n,K},\omega^{p-1})
\end{equation}
whose non-vanishing locus is exactly the ordinary locus $\overline{\Acal}_{n,K}^{\ord}$. Furthermore, $\Ha$ has multiplicity $1$ along the non-ordinary locus. Therefore, if we denote again by $\Ha$ the restriction of the Hasse invariant to $S$, then $\Ha$ is a uniformizer of the local ring at the irreducible divisor $S\setminus S^{\ord}$.

In section \ref{sec-vb-Gzip}, we defined vector bundles $\Vcal_I(\lambda)$ on $\GZip^\mu$ for $\lambda\in X^*(T)$. Via the map $\zeta$, the Hodge vector bundle $\Omega$ (see \eqref{Omega}) and the Hodge line bundle $\omega$ correspond respectively to the bundles $\Vcal_I(\lambda_\Omega)$ and $\Vcal_I(\lambda_\omega)$, where
\begin{align*}
    \lambda_\Omega = (0,\dots, 0 ,-1, 1) \\
   \lambda_\omega = (-1,\dots ,-1, n).
\end{align*}

\subsubsection*{The Hilbert--Blumenthal case}
We simply write $H$ for the special fiber of the reductive group $\textbf{H}_{\ZZ_p}$ defined in section \ref{hb-var-sec}. The embedding $u_{\ZZ_p}\colon \mathbf{H}_{\ZZ_p}\to \mathbf{G}_{\ZZ_p}$ induces an embedding 
\begin{equation}
    u\colon H\to G
\end{equation}
(that we continue to denote by $u$). Let $\pfr_1,\dots,\pfr_r$ be the prime ideals of $\Ocal_{\mathbf{F}}$ above $p$ and write $\kappa_i\colonequals \Ocal_{\mathbf{F}}/\pfr_i$. We may view $H$ as a subgroup
\begin{equation}
    H\subset \prod_{i=1}^r \Res_{\kappa_i/\FF_p} \GL_{2,\kappa_i}.
\end{equation}
The group $H_k$ identifies with
\begin{equation} \label{Hk-ident}
    H_k =\left\{(A_1,\dots, A_n)\in \GL_{2,k}^n \ \relmiddle| \ \det(A_1)=\dots=\det(A_n) \right\}.
\end{equation}
Define a cocharacter of $H$ by:
\begin{equation}
    \mu_H\colon \GG_{\mathrm{m},k}\to H_k, \quad t\mapsto \left(
\left( \begin{matrix}
    t & \\ & 1
\end{matrix}
    \right), \dots , \left( \begin{matrix}
    t & \\ & 1
\end{matrix}
    \right)
    \right).
\end{equation}
Write $B_H$ (resp. $B_H^{+}$) for the subgroup consisting of tuples $(A_1,\dots,A_n)\in H_k$ of lower-triangular (resp. upper-triangular) matrices. It is clear that $B_H$, $B_H^+$ are Borel subgroups of $H$ defined over $\FF_p$. Write $T_H=B_H\cap B_H^+$ for the maximal torus consisting of tuples of diagonal matrices in $H_k$. The zip datum $\Zcal_{\mu_H}$ attached to $\mu_H$ is $\Zcal_{\mu_H}=(H,B_H,B^+_H, T_H,T_H)$. Let $\HZip^{\mu_H}$ be the stack of $H$-zips attached to $(H,\mu_H)$. Write 
\[\zeta_H\colon \overline{\Xcal}_{\mathbf{F},K'}\to \HZip^{\mu_H}\]
for the map given by \eqref{zeta-Shimura}. For a fixed choice of compatible level structures $K'\subset \mathbf{H}(\AA_f^p)$ and $K\subset \mathbf{G}(\AA_f^p)$, we sometimes write $X_H\colonequals \overline{\Xcal}_{\mathbf{F},K'}$ and $X_G\colonequals \overline{\Acal}_{n,K}$ in order to uniformize the notation. Write simply $\widetilde{u}$ for the map $\widetilde{u}_{K',K}\colon \overline{\Xcal}_{\mathbf{F},K'} \to \overline{\Acal}_{n,K}$ given by \eqref{embed-HB-S}.

Note that the embedding $u \colon H\to G$ is compatible with the cocharacters $\mu_H$ and $\mu_G$. Therefore, $u$ induces by functoriality (see diagram \eqref{GZip-functor}) a morphism of stacks
\begin{equation}
    u_{\zip}\colon \HZip^{\mu_H} \to \GZip^{\mu_G}.
\end{equation}
For simplicity, we omit the cocharacters in the notation and write respectively $\HZip$ and $\GZip$ for the above stacks. By the construction of the maps $\zeta_H$ and $\zeta_G$, there is a commutative diagram:
\begin{equation}\label{diag-com-Shim}
\xymatrix@M=8pt{ 
X_H \ar[d]_{\zeta_H} \ar[r]^{\widetilde{u}} & X_G \ar[d]^{\zeta_G} \\
\HZip \ar[r]_{u_{\zip}} & \GZip.
}
\end{equation}
In the proof of our main result, we will use an embedding $\widetilde{u} \colon X_H\to X_G$ for a totally real field $\mathbf{F}$ where $p$ splits completely. In this case, we write down explicitly the embedding $u$ below. Since $p$ is completely split, we have an identification 
\begin{equation}
\Ocal_{\mathbf{F}}\otimes_{\ZZ} \FF_p \simeq \prod_{i=1}^n \Ocal_{\mathbf{F}}/\pfr_i \simeq \FF_p^n.  
\end{equation}
where $\pfr_1,\dots,\pfr_n$ are the prime ideals of $\mathbf{F}$ dividing $p$. We obtain an isomorphism
\begin{equation}
    H\simeq \{(A_1,\dots,A_n)\in \GL_{2,\FF_p} \ | \ \det(A_1)=\dots=\det(A_n)\}.
\end{equation}
Then, the embedding $u\colon H\to G$ identifies with the following map.
\begin{equation}\label{emb-split}
    u \colon \left(
\left(
\begin{matrix}
    a_1&b_1\\ c_1&d_1
\end{matrix}
\right),
\cdots , 
\left(
\begin{matrix}
    a_n&b_n\\ c_n&d_n
\end{matrix}
\right)
    \right) \mapsto 
    \left(
\begin{matrix}
a_1 & & & & & & & b_1 \\
&a_2 & & & & & b_2&  \\
& & \ddots& & &\iddots & & \\
& & & a_n &b_n & & & \\
& & & c_n& d_n& & & \\
& &\iddots & & &\ddots & & \\
& c_2& & & & &d_2 & \\
c_1 & & & & & & &d_1 \\
\end{matrix}
    \right).
\end{equation}
Note that $u$ induces an isomorphism $u\colon T_H\to T$ and a homomorphism $u\colon B_H\to B$. For a character $\lambda\in X^*(T_H)$, denote by $\Lcal(\lambda)$ the induced line bundle on the stack $\HZip^{\mu_H}$, as well as its pullback to $X_H$ via $\zeta_H$. We use this specific notation to avoid confusion with the vector bundles $\Vcal_I(\lambda)$ defined on $\GZip^{\mu_G}$ and their pullbacks to $X_G$.

\subsection{Toroidal compactification} \label{sec-tor}
We return to the setting of a general Shimura variety of Hodge-type (section \ref{sec-HT-SV}). By \cite[Theorem 1]{Madapusi-Hodge-Tor}, there is a sufficiently fine cone decomposition $\Sigma$ and a toroidal compactification $\Scal_K^\Sigma$ of $\Scal_K$ over $\Ocal_{\mathbf{E}_\pfr}$. Fix such a toroidal compactification, and denote by $S_K^\Sigma$ its special fiber. By \cite[Theorem 6.2.1]{Goldring-Koskivirta-Strata-Hasse}, the map $\zeta\colon S_K\to \GZip^\mu$ extends to a map
\begin{equation}
    \zeta^\Sigma\colon S_K^{\Sigma}\to \GZip^\mu.
\end{equation}
Furthermore, by \cite[Theorem 1.2]{Andreatta-modp-period-maps}, the map $\zeta^\Sigma$ is smooth. Since $\zeta$ is surjective, $\zeta^\Sigma$ is also surjective. For $\lambda\in X^*(T)$, denote by $\Vcal^{\Sigma}_I(\lambda)$ the vector bundle $\zeta^{\Sigma,*}(\Vcal_I(\lambda))$. By construction, $\Vcal^{\Sigma}_I(\lambda)$ coincides with the canonical extension of $\Vcal_I(\lambda)$ to $S_K^{\Sigma}$. We have the following Koecher principle:

\begin{theorem}[{\cite[Theorem 2.5.11]{Lan-Stroh-stratifications-compactifications}}] \label{thm-koecher}
The natural map
\begin{equation}
    H^0(S^{\Sigma}_K,\Vcal^{\Sigma}_I(\lambda)) \to H^0(S_K,\Vcal_I(\lambda)) 
\end{equation}
is an isomorphism, except when $\dim(S_K)=1$ and $S^{\Sigma}_K \setminus S_K\neq \emptyset$.
\end{theorem}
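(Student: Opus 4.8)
The plan is to prove the two directions separately. Injectivity of the restriction map is immediate: $S_K$ is a dense open subscheme of the reduced scheme $S_K^{\Sigma}$ and $\Vcal^{\Sigma}_I(\lambda)$ is locally free, hence torsion-free, so a section vanishing on the dense open $S_K$ vanishes identically. The whole content is therefore surjectivity, i.e. extending a given $f\in H^0(S_K,\Vcal_I(\lambda))$ across the boundary divisor $D\colonequals S_K^{\Sigma}\setminus S_K$. Since $\Vcal^{\Sigma}_I(\lambda)$ is the canonical extension of $\Vcal_I(\lambda)$, it is a coherent subsheaf of $j_*\Vcal_I(\lambda)$ where $j\colon S_K\hookrightarrow S_K^{\Sigma}$, so it suffices to check, \'{e}tale-locally on $S_K^{\Sigma}$ along $D$, that the meromorphic section $f$ already lies in this subsheaf. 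Thus the problem is local along $D$, and I would replace $S_K^{\Sigma}$ by its formal completion along a boundary stratum.

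The key input is the structure theory of boundary charts of toroidal compactifications of Hodge-type Shimura varieties in characteristic $p$ (\cite{Madapusi-Hodge-Tor, Lan-Stroh-stratifications-compactifications}). On such a chart, the formal completion of $S_K^{\Sigma}$ along a boundary stratum is, after a quotient by a finite arithmetic group, of the form $\Xi(\sigma)^{\wedge}$, where $\Xi\to C$ is a torsor under a split torus $E_{\sigma}$ whose cocharacter lattice carries the cone $\sigma$, $C$ is a proper smooth scheme which is a torsor under an abelian scheme over a boundary Shimura variety $S^{(0)}$ of strictly smaller dimension, and $\Xi\hookrightarrow \Xi(\sigma)$ is the partial torus embedding attached to $\sigma\in\Sigma$. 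Pulling $f$ back to the formal completion of $\Xi$ and decomposing $\Oscr_{\Xi}=\bigoplus_{\ell\in\mathbf{S}}\Oscr_C\cdot\Lscr^{(\ell)}$ into $E_{\sigma}$-weight spaces ($\mathbf{S}=X^*(E_{\sigma})$, $\Lscr^{(\ell)}$ a line bundle on $C$), together with the corresponding description of $\Vcal_I(\lambda)|_{\Xi}$, yields the Fourier--Jacobi expansion $f=\sum_{\ell\in\mathbf{S}}f_{\ell}$ with $f_{\ell}\in H^0(C,\Wscr_{\lambda}\otimes\Lscr^{(\ell)})$ for a fixed vector bundle $\Wscr_{\lambda}$ on $C$ (the fibrewise restriction of the canonical extension). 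The section $f$ extends to $\Xi(\sigma)$, hence across $D$ in this chart, precisely when $f_{\ell}=0$ for all $\ell$ outside the dual cone $\sigma^{\vee}$.

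The heart of the matter, and \textbf{the step I expect to be the main obstacle}, is the vanishing of these ``negative'' Fourier--Jacobi coefficients. One must show $H^0(C,\Wscr_{\lambda}\otimes\Lscr^{(\ell)})=0$ whenever $\ell$ lies outside a suitable closed half-space of $\mathbf{S}_{\RR}$; this uses that $C$ is proper and that, by positivity of the polarization on the degenerating semi-abelian scheme (the positivity inherent in the Faltings--Chai degeneration data), the line bundles $\Lscr^{(\ell)}$ are anti-ample in the $\ell$-directions transverse to $S^{(0)}$. Because $\Sigma$ is taken sufficiently fine, every cone $\sigma$ occurring spans in the relevant direction a subspace of positive dimension, so this half-space vanishing already forces $f_{\ell}=0$ for all $\ell\notin\sigma^{\vee}$ --- as long as the base $C$ has positive dimension, i.e. the cusp is not a single point. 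The unique situation in which a cusp is a point is $\dim(S_K)=1$ with nonempty boundary, which is exactly the excluded case (and there a section can genuinely acquire a pole at the cusp, e.g. the $j$-invariant). The local extensions produced this way are canonical, so they automatically glue to the desired global $\tilde f\in H^0(S_K^{\Sigma},\Vcal^{\Sigma}_I(\lambda))$.

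A cleaner route that avoids re-deriving the positivity in full Hodge-type generality is to reduce to the Siegel case: using the closed embedding $\Scal_K\hookrightarrow \Acal_{n,K_0}\otimes_{\ZZ_{(p)}}\Ocal_{\mathbf{E}_{\pfr}}$ as the normalization of a scheme-theoretic image, together with compatibility of toroidal compactifications and of canonical extensions of automorphic vector bundles under this embedding, the statement for $S_K$ follows from that for $\Acal_{n,K_0}$, which is the classical Koecher principle for Siegel modular forms proved via Fourier--Jacobi expansions. In either approach the genuinely delicate bookkeeping is matching the canonical extension $\Vcal^{\Sigma}_I(\lambda)$ with the torus-equivariant sheaf on the boundary charts; once that is in place, the required vanishing is a formal consequence of properness and positivity, and the exceptional one-dimensional case is forced out for the structural reason just explained.
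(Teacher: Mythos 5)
The paper itself offers no proof of this statement: it is quoted verbatim from Lan--Stroh \cite[Theorem 2.5.11]{Lan-Stroh-stratifications-compactifications}, so the only meaningful comparison is with the proof in that reference, whose general strategy (Fourier--Jacobi expansion along the boundary charts of $S_K^{\Sigma}$, and vanishing of the coefficients outside the relevant dual cone) your sketch does follow in outline.

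However, there is a genuine gap at precisely the step you single out as the heart of the matter. You derive the vanishing of the negative coefficients $f_{\ell}$ solely from properness of $C$ together with anti-ampleness of $\Lscr^{(\ell)}$ on the abelian part, and you then claim the mechanism can only break down when the cusp is a single point, asserting that this happens only if $\dim(S_K)=1$. That assertion is false: totally degenerate boundary components, where $C$ is zero-dimensional, occur for Shimura varieties of arbitrary dimension --- every Hilbert--Blumenthal variety and every Siegel variety has point cusps in its minimal compactification --- and in the Hilbert--Blumenthal case (dimension $n\geq 2$, squarely inside the scope of the theorem) the codimension-one toroidal strata lie over such point cusps, so your positivity argument yields no vanishing at all there, even though Koecher's principle does hold. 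The actual proofs (Faltings--Chai, Lan, Lan--Stroh) use an input your sketch never invokes: the Fourier--Jacobi coefficients are invariant under the arithmetic stabilizer group of the cusp (for Hilbert, the totally positive units), whose orbits on weights $\ell$ outside the closed dual of the positive cone are infinite and escape any finite union of translates of $\sigma^{\vee}$, contradicting the coherence of the expansion; positivity of $\Lscr^{(\ell)}$ over the abelian part is only an auxiliary ingredient. Correspondingly, the true dividing line for the exceptional case is that the boundary of the minimal compactification has codimension one, not literally ``the cusp is a point''. Finally, your proposed shortcut via the Siegel embedding does not work for general weights: for $\lambda\in X^*(\mathbf{T})$ of the Hodge-type group, $\Vcal_I(\lambda)$ is in general not the pullback of an automorphic bundle on $\Acal_{n,K_0}$, so the classical Koecher principle for Siegel modular forms does not directly transfer.
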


\subsection{The flag space}

We defined the stack of $G$-zip flags in \cite{Goldring-Koskivirta-Strata-Hasse} based on previous work by Ekedahl--van der Geer for Siegel modular varieties (\cite{Ekedahl-Geer-EO}). Let $(G,\mu)$ be a cocharacter data, with attached zip datum $\Zcal_\mu$. Fix a Borel pair $(B,T)$ as in \ref{sec-notation}. Define the stack of $G$-zip flags by
\begin{equation}
    \GF^\mu = [E\backslash \left(G_k \times P/B \right)],
\end{equation}
where $E$ acts on $G_k\times P/B$ by the formula $(x,y)\cdot (g,hB)=(xgy^{-1},xhB)$ for all $(x,y)\in E$ and $(g,hB)\in G_k \times P/B$. Similarly to the stack of $G$-zips, this stack has a moduli interpretation in terms of torsors (\cite[Definition 2.1.1]{Goldring-Koskivirta-Strata-Hasse}). The first projection $\pr_1\colon G_k\times P/B\to G_k$ is $E$-equivariant and induces a proper, smooth morphism
\begin{equation}
    \pi\colon \GF^\mu \to \GZip^\mu
\end{equation}
whose fibers are isomorphic to $P/B$. We now let $S_K$ be the good reduction special fiber of a Hodge-type Shimura varieties as in section \ref{sec-HT-SV} endowed with the map $\zeta\colon S_K\to \GZip^\mu$ from \eqref{zeta-Shimura}. We define the flag space of $S_K$ as the fiber product
\begin{equation}\label{diag-flag}
\xymatrix@1@M=5pt{
\Flag(S_K)\ar[r]^-{\zeta_{\flag}} \ar[d]_{\pi_K} & \GF^\mu \ar[d]^{\pi} \\
S_K \ar[r]_-{\zeta} & \GZip^\mu.
}    
\end{equation}
To show irreducibility and connectedness of certain schemes, we will use the lemma below repeatedly in what follows:

\begin{lemma}\label{lem-irred-conn}
Let $k$ be an algebraically closed field, and $f\colon X\to Y$ a morphism of $k$-varieties, with irreducible fibers.
\begin{assertionlist}
    \item If $f$ is proper and $Y$ is connected, then $X$ is connected.
    \item If $f$ is smooth and $Y$ is irreducible, then $X$ is irreducible.
\end{assertionlist}
\end{lemma}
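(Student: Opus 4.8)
The plan is to treat both assertions by the same elementary topological argument, exploiting in each case that the relevant map is a quotient-type map on underlying spaces: a proper morphism is closed, and a smooth morphism (being flat and locally of finite presentation) is open. A preliminary observation used throughout is that ``irreducible fibers'' forces $f$ to be surjective, since the empty scheme is not irreducible; in particular $X\neq\emptyset$ as soon as $Y\neq\emptyset$, and an irreducible fiber is in particular connected and nonempty.

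For (1), I would argue by contradiction. Suppose $X=X_1\sqcup X_2$ with $X_1,X_2$ nonempty and clopen. Since $f$ is proper it is closed, so $f(X_1)$ and $f(X_2)$ are closed subsets of $Y$, and they cover $Y$ because $f$ is surjective. For each $y\in Y$ the fiber decomposes as $X_y=(X_1\cap X_y)\sqcup(X_2\cap X_y)$ into two relatively clopen subsets; connectedness of $X_y$ forces one of them to be empty, so $X_y\subseteq X_1$ or $X_y\subseteq X_2$. Consequently $f(X_1)\cap f(X_2)=\emptyset$, and $Y=f(X_1)\sqcup f(X_2)$ exhibits $Y$ as a disjoint union of two nonempty closed subsets, contradicting connectedness of $Y$. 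Hence $X$ is connected.

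For (2), I would again argue by contradiction, now using openness of $f$. Suppose $X$ is reducible; since $X$ is nonempty this means $X=Z_1\cup Z_2$ for proper closed subsets $Z_1,Z_2$. For each $y\in Y$, $X_y=(Z_1\cap X_y)\cup(Z_2\cap X_y)$, and irreducibility of $X_y$ forces $X_y\subseteq Z_1$ or $X_y\subseteq Z_2$. Put $Y_i\colonequals\{\,y\in Y\mid X_y\subseteq Z_i\,\}$. Then $Y\setminus Y_i$ is exactly the set of $y$ hit by a point of $X\setminus Z_i$, i.e. $Y\setminus Y_i=f(X\setminus Z_i)$, which is open because $X\setminus Z_i$ is open and $f$ is open; hence each $Y_i$ is closed. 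Since $Y=Y_1\cup Y_2$ and $Y$ is irreducible, we get $Y=Y_1$ (say), meaning every fiber lies in $Z_1$; as $f$ is surjective, $X=\bigcup_{y}X_y\subseteq Z_1$, contradicting $Z_1\subsetneq X$. Therefore $X$ is irreducible.

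The only step requiring a moment of care is the identification $Y\setminus Y_i=f(X\setminus Z_i)$ in part (2) — the equivalence between ``$X_y$ is not entirely contained in $Z_i$'' and ``$y$ lies in the image of the open set $X\setminus Z_i$'' — which is what lets us deduce that $Y_i$ is closed from openness of $f$; the analogous point in (1) uses closedness of $f$ instead. Everything else is formal set-theoretic bookkeeping, and no hypothesis beyond those quoted (properness resp. smoothness, plus (ir)reducibility/connectedness of $Y$) is needed; in particular the ground field $k$ enters only through finite-typeness, ensuring the morphisms are quasi-compact so that the images in question behave well.
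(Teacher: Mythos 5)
Your proof is correct and follows essentially the same argument as the paper: part (1) uses that a proper map is closed together with connectedness (and nonemptiness) of the fibers to split $Y=f(X_1)\sqcup f(X_2)$, and part (2) uses openness of a smooth map to show the sets $Y_i=Y\setminus f(X\setminus Z_i)$ are closed and cover $Y$. The only difference is cosmetic (contradiction phrasing and spelling out surjectivity), so there is nothing to add.
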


\begin{proof}
We first prove (1). Assume that $X=X_1\sqcup X_2$ for two open closed subschemes $X_1, X_2$. Since the fibers are connected, we have $Y=f(X_1)\sqcup f(X_2)$. By properness $f(X_1)$, $f(X_2)$ are closed. Therefore $f(X_1)=\emptyset$ or $f(X_2)=\emptyset$, and hence either $X_1$ or $X_2$ is empty, which proves (1). For the second assertion, suppose $X=X_1\cup X_2$ for two closed subschemes $X_1, X_2$. For $i=1,2$, let $Y_i$ denote the set of elements such that $f^{-1}(y)\subset X_i$. Since the fibers are irreducible, we have $Y=Y_1\cup Y_2$. Furthermore, we have $Y_i=Y\setminus f(X\setminus X_i)$. Since $f$ is open, $Y_i$ is closed. We deduce $Y_1=Y$ or $Y_2=Y$, and thus $X_1=X$ or $X_2=X$.
\end{proof}

Assume that the ordinary locus $S_K^{\ord}$ of $S_K$ is nonempty. Let $\Flag(S_K)^{\ord}$ denote the preimage of $S_K^{\ord}$ under the map $\pi_K\colon \Flag(S_K)\to S_K$. Similarly, if $S\subset S_K$ denotes a connected component, we write $\Flag(S)^{\ord}$ for the preimage of $S^{\ord}$. In the case $S_K=\overline{\Acal}_{n,K}$, we deduce from Proposition \ref{prop-non-ord} the following:

\begin{corollary}\label{flag-non-ord-irred}
Let $S\subset \overline{\Acal}_{n,K}$ be a connected component. For $n\geq 2$, the non-ordinary locus $\Flag(S)\setminus \Flag(S)^{\ord}$ is irreducible.
\end{corollary}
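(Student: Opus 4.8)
The strategy is to realize $\Flag(S)\setminus \Flag(S)^{\ord}$ as the total space of a smooth fibration with irreducible base and irreducible fibers, and then invoke Lemma \ref{lem-irred-conn}(2). By definition $\Flag(S)^{\ord}=\pi_K^{-1}(S^{\ord})$, so that $\Flag(S)\setminus \Flag(S)^{\ord}=\pi_K^{-1}(Z)$, where $Z\colonequals S\setminus S^{\ord}$ is endowed with its reduced structure. Since $n\geq 2$, Proposition \ref{prop-non-ord} shows that $Z$ is irreducible.

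Next I would recall that the projection $\pi\colon \GF^\mu\to \GZip^\mu$ is proper and smooth with all fibers isomorphic to the flag variety $P/B$; hence, by base change along $\zeta\colon S_K\to \GZip^\mu$, the map $\pi_K\colon \Flag(S_K)\to S_K$ is proper and smooth, and its fiber over any point of $S_K$ is isomorphic to $P/B$. Since $P$ is connected and acts transitively on $P/B$, this fiber is irreducible. Restricting along the inclusion $Z\hookrightarrow S_K$, the induced morphism $\pi_K^{-1}(Z)\to Z$ is the base change of $\pi_K$, hence again proper and smooth (so $\pi_K^{-1}(Z)$ is in particular a reduced, separated $k$-scheme of finite type, i.e.\ a $k$-variety), with all fibers isomorphic to $P/B$ and therefore irreducible.

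Finally, I would apply Lemma \ref{lem-irred-conn}(2) to the smooth morphism $\pi_K^{-1}(Z)\to Z$, whose base is irreducible and whose fibers are irreducible, to conclude that $\pi_K^{-1}(Z)=\Flag(S)\setminus \Flag(S)^{\ord}$ is irreducible. There is no serious obstacle in this argument; the one point deserving a word of care is the irreducibility (as opposed to mere connectedness) of the fibers $P/B$, which is immediate since $P/B$ is a homogeneous space under the connected group $P$.
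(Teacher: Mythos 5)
Your argument is correct and is exactly the deduction the paper intends (the paper leaves it implicit, having just stated Lemma \ref{lem-irred-conn} for this purpose): restrict the smooth proper map $\pi_K$ with irreducible fibers $P/B$ to the irreducible base $S\setminus S^{\ord}$ given by Proposition \ref{prop-non-ord}, and apply Lemma \ref{lem-irred-conn}(2). Nothing further is needed.
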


\subsection{Hecke operators}
Let $\Scal_K$ be the integral model over $\Ocal_{\mathbf{E}_\pfr}$ of a Hodge-type Shimura variety, as in section \ref{sec-HT-SV}. We review the definition of Hecke operators on $\Scal_{K}$. For any $g\in \mathbf{G}(\AA_f^p)$, consider the compact open subgroup $K'=K\cap gKg^{-1}$. There are two natural maps $\Scal_{K\cap g K g^{-1}}\to \Scal_{K}$. The first one is the natural change-of-level map $\pi_{K',K}$ defined in \eqref{change-level-pi}. The second one is induced on $\CC$-points by the map
\begin{equation}
    \mathbf{G}(\AA_f)/K' \to \mathbf{G}(\AA_f)/K, \quad x K' \mapsto gx K.
\end{equation}
We denote the induced map simply by $g\colon \Scal_{K\cap g K g^{-1}}\to \Scal_K$. This construction yields a correspondence with finite \'{e}tale maps

\begin{equation}\label{hecke-diag}
    \xymatrix{
& \Scal_{K\cap g K g^{-1}} \ar[dl]_{\pi}  \ar[dr]^{g} & \\
\Scal_{K} &  & \Scal_{K}
}
\end{equation}
The Hecke algebra $\TT_{K^p}(\mathbf{G})$ (with coefficients in $\ZZ$) attached to $K^p$ is the ring of $K^p$-bi-invariant, locally constant functions $\mathbf{G}(\AA_f^p)\to \ZZ$ with compact support. Any such function can be written as a sum of characteristic functions of double cosets $K^p g K^p$ ($g\in \mathbf{G}(\AA_f^p)$). Multiplication is defined by convolution with respect to the left-Haar measure $\nu$ normalized by $\nu(K^p)=1$. The algebra $\TT_{K^p}(\mathbf{G})$ acts on several natural objects related to $\Scal_{K}$ via the correspondence \eqref{hecke-diag}. For example, it acts on the 0-cycles on the special fiber $S_K$ as follows: for $x\in S_{K}(k)$ and $g\in \mathbf{G}(\AA_f^p)$, we set
\begin{equation}\label{TKp}
    T_{K^p,g}(x) \colonequals g_{*} \pi^*(x).
\end{equation}
We view $T_{K^p,g}(x)$ as a formal sum of points of $S_{K}(k)$. Denote by $\Hcal^p(x)$ the set of all points of $S_{K}(k)$ appearing in the support of a 0-cycle of the form $T_{K^p,g}(x)$ for $g\in \mathbf{G}(\AA_f^p)$. We call $\Hcal^p(x)$ the prime-to-$p$ Hecke orbit of $x$. Van Hoften showed the following density result, as a generalization of a classical result of Chai in the Siegel case (\cite{Chai-ordinary-isogeny-invent-math}).

\begin{theorem}[{\cite[Theorem I]{van-hoften-ordinary-hecke}}]\label{thm-Hx-dense}
Assume that the ordinary locus $S_K^{\ord}$ is nonempty. For any point $x\in S_K^{\ord}$, the set $\Hcal^p(x)$ is Zariski dense in $S_{K}$.
\end{theorem}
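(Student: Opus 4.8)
The plan is to adapt the strategy of Chai and Chai--Oort for Siegel modular varieties to the present (prime-to-$p$ level only) setting. Fix $x\in S_K^{\ord}$ with $S_K=\overline{\Acal}_{n,K}$, and let $Z\colonequals\overline{\Hcal^p(x)}$ be its Zariski closure. By construction $Z$ is a closed subscheme stable under every prime-to-$p$ Hecke operator $T_{K^p,g}$, $g\in\mathbf{G}(\AA_f^p)$. The finite set of geometrically connected components of $\overline{\Acal}_{n,K}$ is a quotient of $\mathbf{G}(\AA_f^p)$ through the multiplier character $c\colon\GSp_{2n}\to\GG_{\mathrm{m}}$ (since the level at $p$ is hyperspecial, $c(K_p)=\ZZ_p^\times$, so $\pi_0$ is ``prime-to-$p$''), and hence the prime-to-$p$ Hecke operators act transitively on these components. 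It therefore suffices to prove that $Z$ contains the connected component $S$ through $x$; translating by Hecke then gives $Z=S_K$. The case $n=1$ is the classical density of Hecke orbits on the modular curve, so from now on assume $n\geq 2$, in which case $S$ (a connected component of a smooth scheme, hence irreducible) and its dense open ordinary locus $S^{\ord}$ are irreducible, cf. Proposition \ref{prop-non-ord}.

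\emph{Horizontal directions.} Since $x$ is ordinary, $Z\cap S^{\ord}$ is a nonempty open Hecke-stable subset of $Z\cap S$; let $Z_0$ be an irreducible component of $Z\cap S$ meeting $S^{\ord}$. On $S^{\ord}$ there is the Igusa tower $\mathrm{Ig}_\infty\to S^{\ord}$, a pro-finite-\'etale cover trivializing the multiplicative and \'etale parts of the universal $p$-divisible group, on which the prime-to-$p$ Hecke operators act through $\mathbf{G}(\AA_f^p)$; meanwhile the $\ell$-adic Tate module $T_\ell$ of $\Ascr$ (for $\ell\neq p$) is a lisse $\ZZ_\ell$-sheaf on $S$ recording how Hecke orbits spread in the ``\'etale'' directions. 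The central input is \textbf{big monodromy}: for every $\ell\neq p$, the Zariski closure of the monodromy of $T_\ell$ on the irreducible scheme $S^{\ord}$ is the full symplectic group $\Sp_{2n,\QQ_\ell}$ (due to Chai; the Hodge-type generalizations use Kisin's integral canonical models together with $p$-adic monodromy results of Hartl--P\'al and D'Addezio). It follows that no proper Hecke-stable closed subset can meet $S^{\ord}$ while cutting out a proper sub-local-system of $T_\ell$, which already constrains $Z_0$ severely ``in the \'etale directions''.

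\emph{Vertical directions.} To upgrade this to $Z_0=S$ one works at the formal completion of a point $y\in Z_0(k)\cap S^{\ord}$. By Serre--Tate theory $\widehat{S}_{/y}$ is a formal torus over $k$ of dimension $n(n+1)/2$ (the Serre--Tate canonical coordinates, built from $\Sym^2$ of the \'etale Tate module of the ordinary universal abelian variety), and the prime-to-$p$ Hecke operators fixing $y$ act on this formal torus linearly, through a subgroup of the automorphisms of its cocharacter lattice whose Zariski closure is again large by big monodromy. Chai's local rigidity theorem --- an irreducible closed formal subscheme of a formal torus through the identity that is stable under such an action is a formal subtorus --- then forces $\widehat{Z_0}_{/y}$ to be a formal subtorus; and a subtorus invariant under a Zariski-dense subgroup of the monodromy must be the whole torus. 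Hence $\dim Z_0=\dim S$, and since $S$ is irreducible and $Z_0\subseteq S$ is closed, $Z_0=S$. Therefore $Z\supseteq S$, and by transitivity of the prime-to-$p$ Hecke action on components, $Z=S_K$.

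\emph{Main obstacle.} The crux is supplying, uniformly in $p$ and in the level $K$, the two borrowed ingredients: big $\ell$-adic monodromy of the ordinary locus of $\overline{\Acal}_{n,K}$, and Serre--Tate linearity/rigidity for Hecke-stable formal subvarieties through ordinary points. For $\GSp_{2n}$ both are available in the literature (Chai; Chai--Oort), but arranging the argument to use \emph{only} prime-to-$p$ Hecke operators --- with no Hecke operators at $p$ and no $p$-adic foliations --- requires that the prime-to-$p$ Hecke orbit already be transitive on the central leaf through $y$, which in the ordinary case is a finite set; this is precisely where one uses that the Newton stratum is ordinary. An alternative, more structural route (the one actually carried out by van Hoften) replaces Serre--Tate coordinates by the almost-product structure of the ordinary locus in terms of Igusa varieties in the style of Mantovan and Caraiani--Scholze, reducing the density statement to irreducibility of Igusa varieties plus a rigidity statement for the resulting fibration, with the monodromy computation re-entering as the proof of that irreducibility.
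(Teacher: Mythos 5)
The paper does not prove this statement at all: Theorem \ref{thm-Hx-dense} is imported as a black box from van Hoften \cite{van-hoften-ordinary-hecke} (generalizing Chai's Siegel-case theorem \cite{Chai-ordinary-isogeny-invent-math}) and is only \emph{used}, in Proposition \ref{prop-BI}. So there is no internal argument to compare yours with; the question is whether your sketch would amount to a proof of van Hoften's theorem, and as written it would not. You restrict to $S_K=\overline{\Acal}_{n,K}$, whereas the statement (and its application in Proposition \ref{prop-BI} and Corollary \ref{cor-pb-nonzero}) concerns an arbitrary Hodge-type $S_K$ with nonempty ordinary locus; in the Siegel case the result is literally Chai's theorem, and the genuinely new content of the cited result is the Hodge-type case, where the ingredients you invoke (monodromy, Serre--Tate theory along a Hecke-stable subvariety, Igusa towers, transitivity of prime-to-$p$ Hecke operators on connected components) all require substantial new arguments. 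Since both pillars of your outline are explicitly deferred to the literature, the proposal is in effect the same citation the paper already makes, not an independent proof.

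Moreover, the central step is misattributed even for $\GSp_{2n}$. Big $\ell$-adic monodromy of $S^{\ord}$ does not constrain $Z=\overline{\Hcal^p(x)}$; what one needs is largeness of the monodromy of the Hecke-stable subvariety $Z$ itself, which is a separate and difficult theorem of Chai (monodromy of Hecke-invariant subvarieties), not the classical computation for the full ordinary locus. In the ``vertical'' step, the group acting on the Serre--Tate formal torus at $y$ is not the monodromy of $T_\ell$ (monodromy acts on $\ell$-adic Tate modules along paths in $Z$, not on the formal completion at a point) but the local stabilizer, i.e.\ prime-to-$p$ self-quasi-isogenies of $(A_y,\lambda_y)$; for a generic ordinary $y$ with $\End(A_y)=\ZZ$ these are just scalars. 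They do act by homotheties through a subgroup whose closure is open in $\ZZ_p^\times$, so Chai's rigidity does give a formal subtorus; but to conclude that this subtorus is the whole torus you need a group acting on the cocharacter lattice $\Sym^2 T_p^{\mathrm{et}}$, i.e.\ a big \emph{$p$-adic} monodromy statement for $Z^{\ord}$ (equivalently, irreducibility of an Igusa tower over $Z$), which $\ell$-adic monodromy of $S^{\ord}$ cannot supply. That missing input is exactly what the actual proofs (Chai's Tate-linearity arguments in the Siegel case; van Hoften's Igusa-variety and product-structure arguments, with D'Addezio's monodromy theorems, in the Hodge-type case) are built to provide; your closing paragraph correctly names this route but does not carry it out.
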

Since the formation of the flag space $\Flag(S_K)$ is functorial, we obtain a correspondence similar to \eqref{hecke-diag} on flag spaces. In particular, the Hecke algebra $\TT_{K^p}(\mathbf{G})$ also acts on 0-cycles of the flag space $\Flag(S_K)$. For $g\in \mathbf{G}(\AA_f^p)$, we define the operator $T_{K^p,g}$ similarly to \eqref{TKp}.

\subsection{Automorphic vector bundles} \label{sec-autom-vb}
Let $\lambda\in X^*(\mathbf{T})$ be a character. View $\lambda$ as a character of $\mathbf{B}$ and define a $\mathbf{P}$-representation
\begin{equation}
    \mathbf{V}_I(\lambda) = \ind_{\mathbf{B}}^{\mathbf{P}}(\lambda).
\end{equation}
By applying the $\mathbf{P}$-torsor $\Ical_{\mathbf{P}}$ on $\Sh_K(\mathbf{G},\mathbf{X})$, one can attach to this representation a vector bundle $\Vcal_I(\lambda)$ over $\Sh_K(\mathbf{G},\mathbf{X})$, called automorphic vector bundle. Furthermore, it extends to a vector bundle over the integral model $\Scal_K$, that we continue to denote by $\Vcal_I(\lambda)$. On the special fiber $S_K$, this vector bundle coincides with the pullback via $\zeta\colon S_K\to \GZip^\mu$ of the vector bundle $\Vcal_I(\lambda)$ on the stack $\GZip^\mu$ defined in section \ref{sec-vb-Gzip} (implicitly, we identify the character groups $X^*(\textbf{T})$ and $X^*(T)$). For any field $F$ which is an $\Ocal_{\mathbf{E}_\pfr}$-algebra, the elements of
\begin{equation}
    H^0(\Scal_K\otimes_{\Ocal_{\mathbf{E}_\pfr}} F,\Vcal_I(\lambda))
\end{equation}
are called automorphic forms of weight $\lambda$ and level $K$ with coefficients in $F$. As explained in \cite{Imai-Koskivirta-vector-bundles}, one can attach to $\lambda\in X^*(T)$ a line bundle $\Vcal_{\flag}(\lambda)$ on the stack $\GF^\mu$ such that 
\begin{equation}\label{push-Vflag}
    \pi_*(\Vcal_{\flag}(\lambda))=\Vcal_I(\lambda).
\end{equation}
Similarly, denote again by $\Vcal_{\flag}(\lambda)$ the line bundle $\zeta_{\flag}^*(\Vcal_{\flag}(\lambda))$ on $\Flag(S_K)$ obtained by pullback via the map $\zeta_{\flag}$ of diagram \eqref{diag-flag}. Since $\zeta$ is smooth, \eqref{push-Vflag} also holds for $S_K$, i.e. the push-forward of $\Vcal_{\flag}(\lambda)$ on $\Flag(S_K)$ coincides with $\Vcal_I(\lambda)$ on $S_K$. In particular, we have an identification
\begin{equation}\label{ident-flag-VI}
    H^0(S_K,\Vcal_I(\lambda)) = H^0(\Flag(S_K),\Vcal_{\flag}(\lambda)).
\end{equation}
For characters $\lambda,\lambda'\in X^*(T)$, one has $\Vcal_{\flag}(\lambda)\otimes \Vcal_{\flag}(\lambda')=\Vcal_{\flag}(\lambda+\lambda')$. Moreover, there is a natural map of $L$-representations
\begin{equation}\label{VI-add}
    V_I(\lambda)\otimes V_I(\lambda')\to V_I(\lambda+\lambda'), \quad (f,f')\mapsto ff'.
\end{equation}
It induces a map of vector bundles $\Vcal_I(\lambda)\otimes \Vcal_I(\lambda')\to \Vcal_I(\lambda+\lambda')$. 

In several previous papers (\cite{Goldring-Koskivirta-global-sections-compositio, Goldring-Koskivirta-divisibility, Koskivirta-vanishing-Siegel,Koskivirta-Hilbert-strata}), we studied the set of weights $\lambda\in X^*(\mathbf{T})$ such that there exists nonzero automorphic forms of weight $\lambda$. This problem can be investigated for various coefficient fields. For any field $F$ which is a $\Ocal_{\mathbf{E}_\pfr}$-algebra, define:
\begin{equation}\label{CKF} 
    C_K(F) \colonequals \{\lambda\in X^*(\mathbf{T}) \ | \ H^0(\Scal_{K}\otimes_{\Ocal_{\mathbf{E}_\pfr}} F, \Vcal_I(\lambda)) \neq 0 \}.
\end{equation}
Write $\Ccal(F)$ for the saturation of $C_K(F)$ (since the saturation is independent of the level $K$ by \cite[Corollary 1.5.3]{Koskivirta-automforms-GZip}, we drop it from the notation). Since $\Vcal_I(\lambda)=0$ when $\lambda$ is not $I$-dominant, $\Ccal(F)$ is always contained in $X^*_{+,I}(T)$. The case $F=\CC$ was studied in \cite{Goldring-Koskivirta-GS-cone} (we implicitly choose an isomorphism $\overline{\QQ}_p\simeq \CC$ to view $\CC$ as an $\Ocal_{\mathbf{E}_\pfr}$-algebra). It seems known to experts (although we are not aware of any reference in the literature) that $\Ccal(\CC)$ coincides with the Griffiths--Schmid cone, defined as follows:
\begin{equation}
\Ccal_{\GS}=\left\{ \lambda\in X^{*}(\mathbf{T}) \ \relmiddle| \
\parbox{6cm}{
$\langle \lambda, \alpha^\vee \rangle \geq 0 \ \textrm{ for }\alpha\in \Phi_{\mathbf{L}}^{+}, \\
\langle \lambda, \alpha^\vee \rangle \leq 0 \ \textrm{ for }\alpha\in \Phi^+ \setminus \Phi_{\mathbf{L}}^{+}$}
\right\}.
\end{equation}
We proved in \loccit the inclusion $\Ccal(\CC)\subset \Ccal_{\GS}$ for any Hodge-type Shimura varieties. The main object of study in this article is the set $\Ccal(\overline{\FF}_p)$, which has a much more mysterious structure. By the identification \eqref{ident-flag-VI}, one can interpret $\Ccal(\overline{\FF}_p)$ as the cone of effective divisors on $\Flag(S_K)$ that are associated to automorphic line bundles $\Vcal_{\flag}(\lambda)$.

\subsection{Stable base locus of automorphic bundles}
For a line bundle $\Lcal$ on a scheme $X$, recall that the base locus $B(\Lcal)$ of $\Lcal$ is the set of points $x\in X$ such that any element of $H^0(X,\Lcal)$ vanishes at $x$. It is a closed subset of $X$. The stable base locus $\BB(\Lcal)$ of $\Lcal$ is defined as the intersection of all stable base loci of the positive powers of $\Lcal$, i.e.
\begin{equation}
    \BB(\Lcal)\colonequals \bigcap_{n\geq 1}B(\Lcal^{\otimes n}).
\end{equation}
Let $\lambda\in X^*(T)$ and consider the sheaves $\Vcal_I(\lambda)$ on $S_K$ and $\Vcal_{\flag}(\lambda)$ on $\Flag(S_K)$. We denote the base locus of $\Vcal_{\flag}(\lambda)$ in $\Flag(S_K)$ by $B(\lambda)$ and the stable base locus by $\BB(\lambda)$. Say that a subset $Z\subset S_K$ or $Z\subset \Flag(S_K)$ is stable by prime-to-$p$ Hecke operators if for any point $x\in Z$ and any $g\in \mathbf{G}(\AA_f^p)$, the support of the $0$-cycle $T_{K^p,g}(x)$ is entirely contained in $Z$. We proved in \cite[Theorem 3.6.2]{Koskivirta-vanishing-Siegel} the following result:

\begin{theorem}\label{thm-B-stable}
    The stable base locus $\BB(\lambda)$ is stable by prime-to-$p$ Hecke operators.
\end{theorem}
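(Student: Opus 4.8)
The plan is to reduce the statement to the corresponding statement for the ordinary base loci $B(m\lambda)$ and then exploit the étale correspondence \eqref{hecke-diag} transported to flag spaces. First I would fix $g\in \mathbf{G}(\AA_f^p)$, set $K'=K\cap gKg^{-1}$, and write the two finite étale projections $\pi\colon \Flag(S_{K'})\to \Flag(S_K)$ and $\widetilde{g}\colon \Flag(S_{K'})\to \Flag(S_K)$ obtained from \eqref{hecke-diag} by functoriality of the flag space. The key structural input is that the line bundle $\Vcal_{\flag}(\lambda)$ is defined purely in terms of the morphism $\zeta_{\flag}$ to $\GF^\mu$, and since $\zeta$ commutes with change-of-level maps (diagram in section \ref{sec-GZip-HT}), the flag-space version $\zeta_{\flag}$ also does; moreover the Hecke translate $\widetilde{g}$ lies over an automorphism of the tower at infinite level, so one gets canonical identifications $\pi^*\Vcal_{\flag}(\lambda)\cong \widetilde{g}^*\Vcal_{\flag}(\lambda)$ of line bundles on $\Flag(S_{K'})$, compatibly with tensor powers via $\Vcal_{\flag}(\lambda)\otimes\Vcal_{\flag}(\lambda')=\Vcal_{\flag}(\lambda+\lambda')$.

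Granting this identification, the argument runs as follows. Fix $m\geq 1$ and a point $x\in B(m\lambda)^c$, i.e. there is $f\in H^0(\Flag(S_K),\Vcal_{\flag}(m\lambda))$ with $f(x)\neq 0$. Pulling back along $\pi$ gives $\pi^*f$, a section of $\pi^*\Vcal_{\flag}(m\lambda)\cong\widetilde{g}^*\Vcal_{\flag}(m\lambda)$ not vanishing on $\pi^{-1}(x)$. Since $\pi$ and $\widetilde{g}$ are finite étale, I would take a norm along $\widetilde{g}$: if $\widetilde{g}$ has degree $d$, then $\Norm_{\widetilde{g}}(\pi^*f)$ is a section of $\Vcal_{\flag}(md\lambda)$ on $\Flag(S_K)$, and its non-vanishing locus is exactly $\widetilde{g}(\text{non-vanishing locus of }\pi^*f)$. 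Hence every point of the $0$-cycle $T_{K^p,g}(x)=\widetilde{g}_*\pi^*(x)$ lies outside $B(md\lambda)\supset\BB(\lambda)$. Taking the intersection over all $m$ (equivalently, working with the section ring $R_{\flag}=\bigoplus_m H^0(\Flag(S_K),\Vcal_{\flag}(m\lambda))$ and noting that the norm construction gives a ring-level operation sending elements not vanishing at $x$ to elements not vanishing on the Hecke translate) shows precisely that $x\notin\BB(\lambda)$ implies the support of $T_{K^p,g}(x)$ misses $\BB(\lambda)$; equivalently, the complement of $\BB(\lambda)$ is Hecke-stable, so $\BB(\lambda)$ is.

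The main obstacle I anticipate is making the identification $\pi^*\Vcal_{\flag}(\lambda)\cong\widetilde{g}^*\Vcal_{\flag}(\lambda)$ fully rigorous and \emph{canonical enough} that it is compatible with products (so that it descends to a map of section rings), rather than merely an abstract isomorphism of line bundles; this requires unwinding that both pullbacks represent the same $P$-torsor pulled back from $\GF^\mu$ along two maps $\zeta_{\flag}\circ\pi$ and $\zeta_{\flag}\circ\widetilde{g}$ that agree because the Hecke action is trivial on the pro-étale $G$-zip datum at infinite level. A secondary technical point is confirming that the norm $\Norm_{\widetilde{g}}$ along a finite étale map interacts correctly with the line-bundle structure and has the expected support behavior; this is standard (it is the same mechanism as the symmetric transforms of section \ref{sec-norm-map}, applied to the covering $\widetilde{g}$), but it must be stated with the power $md$ rather than $m$, which is harmless for the stable base locus. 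One should also record that $\Flag(S_K)$ may be reducible, so "non-vanishing at $x$" and the norm construction should be read componentwise; since $\pi,\widetilde{g}$ are finite étale this causes no difficulty.
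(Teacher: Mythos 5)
Your proposal founders on the key step involving the norm. The setup is fine: both $\pi^*\Vcal_{\flag}(\lambda)$ and $\widetilde{g}^*\Vcal_{\flag}(\lambda)$ are canonically $\zeta_{\flag,K'}^*$ of the line bundle on $\GF^\mu$ (because the zip period map commutes with both legs of the prime-to-$p$ Hecke correspondence), so the identification compatible with tensor powers that you worry about is indeed available. The problem is the assertion that the non-vanishing locus of $\Norm_{\widetilde{g}}(\pi^*f)$ is exactly $\widetilde{g}$ of the non-vanishing locus of $\pi^*f$. For a finite \'etale map of degree $d$, the value of the norm at $y$ is the product of the values of the section over the whole fiber $\widetilde{g}^{-1}(y)$; hence the correct statement is that the \emph{zero} locus of $\Norm_{\widetilde{g}}(\pi^*f)$ equals $\widetilde{g}$ of the zero locus of $\pi^*f$, so the non-vanishing locus of the norm is only \emph{contained} in $\widetilde{g}(\{\pi^*f\neq 0\})$, in general strictly. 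Concretely, for $y=\widetilde{g}(x')$ with $\pi(x')=x$ and $f(x)\neq 0$, the fiber $\widetilde{g}^{-1}(y)$ contains other points $z$ whose images $\pi(z)$ are unrelated points of $\Flag(S_K)$ at which $f$ may well vanish, and then $\Norm_{\widetilde{g}}(\pi^*f)(y)=\prod_{z\in\widetilde{g}^{-1}(y)}f(\pi(z))=0$. So your argument does not show $y\notin B(md\lambda)$, and the conclusion that the support of $T_{K^p,g}(x)$ misses $\BB(\lambda)$ does not follow.

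Repairing this requires a genuinely additional input, not just bookkeeping: one must control the section on the \emph{entire} fiber before taking a norm. A standard way is to pass to a deeper level $K''^p\subset K'^p$ with $K''^p$ normal in $K^p$, observe that the stable base locus upstairs is stable under the deck group $K^p/K''^p$ (so a point outside it has its whole fiber outside it), then equalize weights and take a generic linear combination of finitely many sections to produce a single section of some $\Vcal_{\flag}(m\lambda)$ non-vanishing on the whole finite fiber, and only then apply the norm; combined with the $g\leftrightarrow g^{-1}$ symmetry of the correspondence (which you use implicitly when replacing stability of $\BB(\lambda)$ by stability of its complement, and which should be stated), this yields the theorem. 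Note also that the paper itself does not prove this statement: it quotes it from \cite[Theorem 3.6.2]{Koskivirta-vanishing-Siegel}, so there is no internal proof to compare with; but as written your norm step fails and the missing fiber-wise control is the essential content.
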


Next, we define the base locus $B_I(\lambda)$ of the vector bundle $\Vcal_I(\lambda)$ as the set of points $x\in S_K$ such that for any $f\in H^0(S_K,\Vcal_I(\lambda))$, the section $f$ vanishes at $x$. Note that $\Vcal_I(\lambda)$ is a vector bundle, so this condition means that all the coordinate functions of $f$ (in some local basis) vanish at $x$. The base locus $B_I(\lambda)$ is a closed subscheme of $S_K$. Similarly, define the stable base locus $\BB_I(\lambda)$ of $\Vcal_I(\lambda)$ as the intersection
\begin{equation}
    \BB_I(\lambda)\colonequals \bigcap_{n\geq 1} B_I(n\lambda). 
\end{equation}
Since $\pi_{*}(\Vcal_{\flag}(\lambda))=\Vcal_I(\lambda)$, we can rewrite the base locus $B_I(\lambda)$ and the stable base locus $\BB_I(\lambda)$ in terms of the corresponding sets for the line bundle $\Vcal_{\flag}(\lambda)$. Denote by $\pi_K\colon \Flag(S_K)\to S_K$ the natural projection. Then, we have the following:
\begin{align*}
    B_I(\lambda)&=\{x\in S_K \ | \ \pi_K^{-1}(x)\subset B(\lambda) \}, \\
    \BB_I(\lambda)&=\{x\in S_K \ | \ \pi_K^{-1}(x)\subset \BB(\lambda) \}.
\end{align*}
Since $\BB(\lambda)$ is stable by prime-to-$p$ Hecke operators, we deduce immediately that $\BB_I(\lambda)$ is also stable by prime-to-$p$ Hecke operators. Using this observation, we can prove the following:
\begin{proposition}\label{prop-BI}
Let $S_K$ be a Hodge-type Shimura variety with nonempty ordinary locus. For any $\lambda\in X^*(T)$, the following are equivalent:
\begin{equivlist}
    \item $\BB_I(\lambda)$ contains an ordinary point.
    \item For any $n\geq 1$, we have $H^0(S_K,\Vcal_I(n\lambda))=0$.
    \item One has $\lambda \notin \Ccal(\overline{\FF}_p)$.
\end{equivlist}
\end{proposition}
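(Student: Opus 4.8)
The plan is to prove the cycle of implications (ii) $\Rightarrow$ (i) $\Rightarrow$ (ii) together with the equivalence (ii) $\Leftrightarrow$ (iii), the latter being a matter of unwinding definitions. For (ii) $\Leftrightarrow$ (iii): by definition $\Ccal(\overline{\FF}_p)$ is the saturation of the cone $C_K(\overline{\FF}_p)=\{\mu\in X^*(T)\mid H^0(S_K,\Vcal_I(\mu))\neq 0\}$, which by \cite[Corollary 1.5.3]{Koskivirta-automforms-GZip} does not depend on $K$. Hence $\lambda\in\Ccal(\overline{\FF}_p)$ if and only if $H^0(S_K,\Vcal_I(n\lambda))\neq 0$ for some $n\geq 1$; the negation of this statement is precisely (ii). For (ii) $\Rightarrow$ (i): if $H^0(S_K,\Vcal_I(n\lambda))=0$ for every $n\geq 1$, then vacuously every section of $\Vcal_I(n\lambda)$ vanishes at every point, so $B_I(n\lambda)=S_K$ for all $n$, whence $\BB_I(\lambda)=\bigcap_{n\geq 1}B_I(n\lambda)=S_K$; since the ordinary locus is nonempty by hypothesis, $\BB_I(\lambda)$ contains an ordinary point.

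The substantive implication is (i) $\Rightarrow$ (ii). Suppose $x\in\BB_I(\lambda)$ is an ordinary point. Recall from the discussion preceding the proposition that, since $\BB_I(\lambda)=\{y\in S_K\mid \pi_K^{-1}(y)\subset\BB(\lambda)\}$ and $\BB(\lambda)$ is stable under prime-to-$p$ Hecke operators by Theorem \ref{thm-B-stable}, the set $\BB_I(\lambda)$ is a closed subset of $S_K$ stable under prime-to-$p$ Hecke operators. By Theorem \ref{thm-Hx-dense} (van Hoften), the prime-to-$p$ Hecke orbit $\Hcal^p(x)$ of the ordinary point $x$ is Zariski dense in $S_K$. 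Since $\BB_I(\lambda)$ contains $x$ and is Hecke-stable, it contains $\Hcal^p(x)$; being closed, it contains the Zariski closure of $\Hcal^p(x)$, which is all of $S_K$. Therefore $\BB_I(\lambda)=S_K$, and a fortiori $B_I(n\lambda)=S_K$ for every $n\geq 1$.

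It remains to pass from $B_I(n\lambda)=S_K$ back to the vanishing of global sections. Here one uses that $\Scal_K$ is smooth over $\Ocal_{\mathbf{E}_\pfr}$, so $S_K$ is smooth over $k$, hence reduced; a section of the locally free sheaf $\Vcal_I(n\lambda)$ whose image in the fiber $\Vcal_I(n\lambda)\otimes k(y)$ vanishes for every $y\in S_K$ is therefore the zero section. Thus $H^0(S_K,\Vcal_I(n\lambda))=0$ for all $n\geq 1$, which is (ii), completing the cycle.

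Essentially all the depth of the statement is imported: the Hecke-stability of the stable base locus (Theorem \ref{thm-B-stable}) and the Zariski density of prime-to-$p$ Hecke orbits of ordinary points (Theorem \ref{thm-Hx-dense}). Once these are granted, the argument above is a short topological one, and the only points requiring care are the translation between $B_I$ and $B$ via $\pi_K$ (already recorded before the proposition) and the reducedness of $S_K$ used in the last step. The hypothesis that the ordinary locus is nonempty is what makes both the density input and the implication (ii) $\Rightarrow$ (i) available.
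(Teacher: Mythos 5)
Your proposal is correct and follows essentially the same route as the paper: (ii)$\Leftrightarrow$(iii) by definition of $\Ccal(\overline{\FF}_p)$ as a saturation, (ii)$\Rightarrow$(i) trivially, and (i)$\Rightarrow$(ii) by combining the Hecke-stability of $\BB_I(\lambda)$ (deduced from Theorem \ref{thm-B-stable} via $\pi_K$) with van Hoften's density theorem to force $\BB_I(\lambda)=S_K$. Your only additions are making explicit the final step (reducedness of $S_K$ giving vanishing of sections from $B_I(n\lambda)=S_K$), which the paper leaves implicit.
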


\begin{proof}
The equivalence of (ii) and (iii) follows from the definition of $\Ccal(\overline{\FF}_p)$, and the implication (ii) $\Rightarrow$ (i) is obvious. Conversely, assume that $\BB_I(\lambda)$ intersects the ordinary locus. By stability under prime-to-$p$ Hecke operators, and using the density result of prime-to-$p$ Hecke orbits of van Hoften (Theorem \ref{thm-Hx-dense}), we obtain that $\BB_I(\lambda)=S_K$. This shows the result.
\end{proof}

In other words, the above result shows that if some power of $\Vcal_{\flag}(\lambda)$ is effective (i.e. admits a nonzero section), then $\BB_I(\lambda)$ is entirely contained in the non-ordinary locus. In general, we believe that when $\lambda\in \Ccal(\overline{\FF}_p)$, the locus $\BB_I(\lambda)$ is even empty, but the weaker statement of 
 Proposition \ref{prop-BI} is enough for our purpose. This proposition has the following corollary. Let $X_H$, $X_G$ denote the mod $p$ special fibers of a Hilbert--Blumenthal and Siegel Shimura varieties respectively, with good reduction at $p$, as in the setting of section \ref{hb-var-sec}. Fix a symplectic embedding $u\colon \mathbf{H}\to \mathbf{G}$ as in \eqref{u-embedQ}, which induces an embedding $\widetilde{u}\colon X_H\to X_G$ (for an appropriate choice of level structures).

\begin{corollary}\label{cor-pb-nonzero}
Assume that $H^0(X_G,\Vcal_I(\lambda))\neq 0$. Then there exists $m\geq 1$ and a nonzero Siegel automorphic form $f\in H^0(X_G,\Vcal_I(m\lambda))$ such that the pullback
\[\widetilde{u}^*(f) \in H^0(X_H, \widetilde{u}^*(\Vcal_I(m\lambda)))\]
is nonzero.
\end{corollary}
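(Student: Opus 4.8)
The plan is to read this off from Proposition \ref{prop-BI}, whose content (ultimately van Hoften's Hecke-orbit density, Theorem \ref{thm-Hx-dense}) is precisely what is needed; the corollary is then a matter of unwinding definitions. First I would observe that the hypothesis $H^0(X_G,\Vcal_I(\lambda))\neq 0$ means that condition (ii) of Proposition \ref{prop-BI} fails for the Siegel-type (hence Hodge-type) Shimura variety $X_G=\overline{\Acal}_{n,K}$, which has nonempty ordinary locus. Therefore condition (i) of that proposition fails as well: the stable base locus $\BB_I(\lambda)\subset X_G$ contains \emph{no} ordinary point.

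Next I would exhibit an ordinary point of $X_G$ lying in the image of $\widetilde{u}$. The Hilbert--Blumenthal Shimura variety $X_H=\overline{\Xcal}_{\mathbf{F},K'}$ has reflex field $\QQ$ at $p$, so its ordinary locus $X_H^{\ord}$ is nonempty; and the morphism $\widetilde{u}$ only forgets the $\Ocal_{\mathbf{F}}$-action and transports the prime-to-$p$ level structure, leaving the underlying abelian scheme (with its polarization) unchanged, so it sends ordinary points to ordinary points, i.e. $\widetilde{u}(X_H^{\ord})\subset X_G^{\ord}$ (alternatively this follows from the commutative square \eqref{diag-com-Shim}, since $u$ identifies $T_H$ with $T$ and carries the $\mu_H$-ordinary stratum of $\HZip^{\mu_H}$ into the ordinary stratum of $\GZip^{\mu_G}$). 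Fix $x\in X_H^{\ord}$ and put $y=\widetilde{u}(x)\in X_G^{\ord}$. By the first paragraph $y\notin\BB_I(\lambda)=\bigcap_{m\geq 1}B_I(m\lambda)$, so there is some $m\geq 1$ with $y\notin B_I(m\lambda)$; by definition of the base locus this produces $f\in H^0(X_G,\Vcal_I(m\lambda))$ that does not vanish at $y$. Since the fibre of $\widetilde{u}^*(\Vcal_I(m\lambda))$ at $x$ is canonically the fibre of $\Vcal_I(m\lambda)$ at $y$, we get $\widetilde{u}^*(f)(x)=f(y)\neq 0$, so $\widetilde{u}^*(f)$ is a nonzero element of $H^0(X_H,\widetilde{u}^*(\Vcal_I(m\lambda)))$, as required.

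I do not expect a serious obstacle: essentially all the work has been placed in Proposition \ref{prop-BI}. The only point deserving a sentence of justification is the claim $\widetilde{u}(X_H^{\ord})\subset X_G^{\ord}$, namely that forgetting the real-multiplication structure preserves ordinariness, which I would settle either directly at the level of abelian varieties or through the $G$-zip picture of \eqref{diag-com-Shim}; everything else is a direct manipulation of the definitions of $B_I(\cdot)$ and $\BB_I(\cdot)$ and of pullback of sections of vector bundles.
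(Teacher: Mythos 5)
Your argument is correct and follows the paper's own proof essentially verbatim: invoke Proposition \ref{prop-BI} to see that $\BB_I(\lambda)$ misses the ordinary locus, pick an ordinary point in $\widetilde{u}(X_H)$ (the paper leaves the inclusion $\widetilde{u}(X_H^{\ord})\subset X_G^{\ord}$ implicit, which you justify correctly), and take a section of some $\Vcal_I(m\lambda)$ not vanishing there. No gaps; the extra sentence on preservation of ordinariness is a harmless elaboration of the same route.
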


\begin{proof}
Since $H^0(X_G,\Vcal_I(\lambda))\neq 0$, the stable base locus $\BB_I(\lambda)$ does not intersect the ordinary locus. Since the ordinary locus of Hilbert--Blumenthal Shimura varieties is nonempty, we can choose an ordinary point $x\in \widetilde{u}(X_H)$. As $x\notin \BB_I(\lambda)$, we can find $m\geq 1$ and a section $f\in H^0(X_G,\Vcal_I(m\lambda))$ which does not vanish at $x$. The result follows.
\end{proof}

\begin{rmk}
For the proof of the Cone Conjecture (Theorem \ref{main-thm}) carried out in this paper, it would be highly desirable to do away with the multiple $m\lambda$ of $\lambda$ that appears in Corollary \ref{cor-pb-nonzero}, even at the expense of changing the totally real field $\mathbf{F}$. We believe that the answer to the following question is always "yes":
\begin{question}\label{quest-Hilbert}
Assume that $H^0(X_G,\Vcal_I(\lambda))\neq 0$. Does there exists
\begin{bulletlist}
    \item a totally real field $\mathbf{F}$ of degree $n$ such that $p$ is split in $\mathbf{F}$,
    \item a Hilbert--Blumenthal Shimura variety $X_H$ with good reduction at $p$, attached to $\mathbf{F}$, with a symplectic embedding $\widetilde{u}\colon X_H \to X_G$ of Shimura varieties,
    \item a nonzero automorphic form $f\in H^0(X_G,\Vcal_I(\lambda))$,
\end{bulletlist}
such that the pullback $\widetilde{u}^*(f) \in H^0(X_H, \widetilde{u}^*(\Vcal_I(\lambda)))$ is nonzero ?
\end{question}
If this question has a positive answer, then we will see that it is possible to give more precise version of the Cone Conjecture, where we have control over the weight (see Remark \ref{rmk-precise-weight}). To show that a triple $(\mathbf{F},\widetilde{u},f)$ as in Question \ref{quest-Hilbert} always exists, it would be enough to prove that the union of all Hilbert--Blumenthal Shimura varieties embedded into $X_G$ (with the condition that $p$ splits in $\mathbf{F}$) is Zariski dense. Another strategy would be to use Hecke operators to move $f$ around so that its Hecke transform does not vanish at a specified point.
\end{rmk}

\subsection{Torsors}
Let $S_K$ be the special fiber of a Hodge-type Shimura variety with good reduction such that the ordinary locus $S_K^{\ord}$ is nonempty. Recall that $S_K$ is endowed with a $P$-torsor $I_P$. In particular, $I_L\colonequals I_P/R_{\mathrm{u}}(P)$ is an $L$-torsor on $S_K$. Write
\begin{equation}
    \beta\colon Y_K\to S_K
\end{equation}
for the geometric object corresponding to this $L$-torsor. Here, $Y_K$ is a $k$-scheme endowed with an action of $L$ and $\beta$ is a smooth morphism. Let $Y^{\ord}_K$ denote the preimage of the ordinary locus $S_K^{\ord}$ by $\beta$. The map $\zeta\colon S_K\to \GZip^\mu$ restricts to a map 
\begin{equation}
    \zeta^{\ord}\colon S_K^{\ord}\to \Ucal^{\ord}.
\end{equation}
Since $\Ucal^{\ord}\simeq B(L(\FF_p))$, we obtain a $L(\FF_p)$-principal bundle over $S_K^{\ord}$, that gives rise to $Y^{\ord}_K$ when we extend it to an $L$-torsor. Write
\begin{equation}
    \beta_{L(\FF_p)}\colon Y^{\ord}_{L(\FF_p),K}\to S^{\ord}_K
\end{equation}
for the geometric object corresponding to this $L(\FF_p)$-principal bundle (as explained in section \ref{sec-torsors}). By the above discussion, we have an isomorphism
\begin{equation}\label{isom-torsor-Y}
    \left[ L(\FF_p) \backslash \left(Y^{\ord}_{L(\FF_p),K} \times L\right) \right] \simeq Y^{\ord}_K
\end{equation}
where $L(\FF_p)$ acts diagonally on $Y^{\ord}_{L(\FF_p),K} \times L$.

\section{\texorpdfstring{Automorphic forms in characteristic $p$}{}}\label{sec-autom}

\subsection{Trivial part of an automorphic vector bundle} \label{sec-triv-vb}
Let $S_K$ be the special fiber of a Hodge-type Shimura variety at a place of good reduction, as in section \ref{sec-HT-SV}. We fix a symplectic embedding into a Siegel-type Shimura variety, whose special fiber we denote by $\widetilde{u}\colon S_K\to \overline{\Acal}_{n,K_0}$ (for an appropriate choice of level structures). For technical reasons, we need to make the following assumptions:
\begin{assumption}\label{assume-SK-irred} \ 
\begin{assertionlist}
    \item The classical ordinary locus $S_K^{\ord}$ is nonempty.
    \item For any connected component $S\subset S_K$, the non-ordinary locus $S\setminus S^{\ord}$ is irreducible.
    \item The classical Hasse invariant $\Ha$ pulled back to $S$ via $\widetilde{u}$ has multiplicity one along the non-ordinary locus.
\end{assertionlist}
\end{assumption}

All three assumptions are satisfied in the case of the Siegel-type Shimura variety $\overline{\Acal}_{n,K}$, by Proposition \ref{prop-non-ord}. Let $\zeta\colon S_K\to \GZip^\mu$ be the map \eqref{zeta-Shimura}. Recall that $S_K^{\ord}=\zeta^{-1}(\Ucal^{\ord})$, where $\Ucal^{\ord}$ is the unique open stratum of $\GZip^\mu$. Since $\Ucal^{\ord}\simeq B(L(\FF_p))$ by \eqref{Ucal-ord}, the $k$-scheme $S_K^{\ord}$ is endowed naturally with an $L(\FF_p)$-torsor corresponding to the map $\zeta^{\ord}\colon S_K^{\ord}\to \Ucal^{\ord}$.

Let $\lambda\in X^*(T)$ be a character and consider the associated automorphic vector bundle $\Vcal_I(\lambda)$, attached to the $L$-representation $V_I(\lambda)$. View $V_I(\lambda)$ as an $L(\FF_p)$-representation and write $V_I(\lambda)^{L(\FF_p)}$ for the subspace of $L(\FF_p)$-invariant vectors. Let $\Vcal_I(\lambda)_{\triv}$ be the corresponding vector bundle on $S_K^{\ord}$, as explained in section \ref{sec-fin-et}. 

\begin{definition}
Let $f\in H^0(S_K,\Vcal_I(\lambda))$ be an automorphic form of weight $\lambda$ and level $K$. We say that $f$ is of trivial-type if the restriction $f^{\ord}\colonequals f|_{S_K^{\ord}}$ lies in the subspace $H^0(S_K^{\ord},\Vcal_I(\lambda)_{\triv})\subset H^0(S_K^{\ord},\Vcal_I(\lambda))$. 
\end{definition}

We denote by $H^0(S_K,\Vcal_I(\lambda))_{\triv}$ the subspace of trivial-type automorphic forms of weight $\lambda$ and level $K$. In other words, one has the identity
\begin{equation}
H^0(S_K,\Vcal_I(\lambda))_{\triv} = H^0(S_K,\Vcal_I(\lambda)) \cap H^0(S^{\ord}_K,\Vcal_I(\lambda)_{\triv})
\end{equation}
where this intersection is taken inside the ambiant space $H^0(S^{\ord}_K,\Vcal_I(\lambda))$. If $f$ is an automorphic form that arises by pullback from the stack of $G$-zips via the map $\zeta\colon S_K\to \GZip^{\mu}$, then it is automatically of trivial-type since sections of $\Vcal_I(\lambda)$ and $\Vcal_I(\lambda)_{\triv}$ over $B(L(\FF_p))$ coincide (see \eqref{BH-triv}). Therefore, we have inclusions
\begin{equation}\label{inclu-triv}
    H^0(\GZip^{\mu},\Vcal_I(\lambda))\subset H^0(S_K,\Vcal_I(\lambda))_{\triv} \subset H^0(S_K,\Vcal_I(\lambda)).
\end{equation}

At this point, one might wonder if there exist any automorphic forms of trivial-type other than the ones arising by pullback from the stack $\GZip^\mu$. Surprisingly, the answer is that there many such automorphic forms. More precisely, for any $f\in H^0(S^{\ord}_K,\Vcal_I(\lambda))$, we will define in section \ref{sec-sym-transf} below the norm of $f$ as an automorphic form $\Norm_{L(\FF_p)}(f)\in H^0(S_K^{\ord},\Vcal_I(N\lambda)_{\triv})$ where $N=|L(\FF_p)|$. One shows easily that this norm is always nonzero when $f\neq 0$. Then, one can multiply (if necessary) $\Norm_{L(\FF_p)}(f)$ by a sufficiently large power $m\geq 1$ of the classical Hasse invariant so that $\Ha^m \Norm_{L(\FF_p)}(f)$ extends to $S_K$. This shows that any automorphic form $f$ of weight $\lambda$ gives rise to an automorphic form of trivial-type of weight $N\lambda+m (p-1)\lambda_\omega$ for some $m\geq 0$. We will need the following, stronger result.

\begin{theorem}\label{thm-triv-lambda}
Assume that $H^0(S_K,\Vcal_I(\lambda))\neq 0$ and write $N\colonequals |L(\FF_p)|$. There exists an integer $1\leq d\leq N$ such that $H^0(S_K,\Vcal_I(d\lambda))_{\triv}\neq 0$.
\end{theorem}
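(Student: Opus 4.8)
The plan is to use the symmetric transforms $\sfr^{(d)}_{L(\FF_p)}$ introduced in section \ref{sec-norm-map}, combined with the algebraicity of the relation between $f$ and its Frobenius-conjugates, to produce a trivial-type form of weight $d\lambda$ for some $d \le N$ (rather than the cruder weight $N\lambda + m(p-1)\lambda_\omega$ obtained by taking the naive norm and multiplying by a power of $\Ha$). First I would start from a nonzero $f \in H^0(S_K, \Vcal_I(\lambda))$, restrict it to the ordinary locus, and view $f^{\ord}$ as an $L(\FF_p)$-equivariant regular map $Y_K^{\ord} \to V_I(\lambda)$ on the torsor $\beta_{L(\FF_p)} \colon Y^{\ord}_{L(\FF_p),K}\to S^{\ord}_K$. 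Consider the finite collection of ``conjugates'' $\{h\cdot f^{\ord}\}_{h\in L(\FF_p)}$: these are sections of $\Vcal_I(\lambda)$ over $S_K^{\ord}$ (the $L(\FF_p)$-action permutes them), and the elementary symmetric functions $\sfr^{(d)}_{L(\FF_p)}(f^{\ord})$ for $1 \le d \le N$ are sections of $\Sym^d(\Vcal_I(\lambda))_{\triv}$ over $S_K^{\ord}$. The key point: not all of these can vanish identically, since their alternating combination recovers $\prod_{h}(t - h\cdot f^{\ord})$, whose constant-type specialization at $t = 0$ is $\pm\Norm_{L(\FF_p)}(f^{\ord})$, which is visibly nonzero when $f \ne 0$ (the factors $h\cdot f^{\ord}$ are nonzero regular maps on an irreducible scheme). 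So there is a \emph{smallest} $d$ with $\sfr^{(d)}_{L(\FF_p)}(f^{\ord}) \ne 0$.

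The next step is to land this section in $\Vcal_I(d\lambda)_{\triv}$ rather than merely $\Sym^d(\Vcal_I(\lambda))_{\triv}$. Here I would use the multiplication map $V_I(\lambda)^{\otimes d} \to V_I(d\lambda)$ of \eqref{VI-add} (more precisely its factorization through $\Sym^d$), which is $L$-equivariant and hence induces $\Sym^d(\Vcal_I(\lambda))_{\triv} \to \Vcal_I(d\lambda)_{\triv}$; composing, one obtains a section $g^{\ord} \in H^0(S_K^{\ord}, \Vcal_I(d\lambda)_{\triv})$. One must check $g^{\ord}\neq 0$: this is where the highest-weight structure of $V_I(\lambda)$ enters — the product of the $d$ highest-weight vectors maps to the highest-weight vector of $V_I(d\lambda)$, so the multiplication map is nonzero on the relevant $\Sym^d$; combined with minimality of $d$ and a weight/leading-term argument on the torsor, $g^{\ord}$ does not vanish identically. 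This gives a \emph{nonzero} trivial-type form of weight $d\lambda$ defined \emph{only over the ordinary locus}.

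The main obstacle — and the heart of the argument — is extending $g^{\ord}$ from $S_K^{\ord}$ to all of $S_K$ \emph{without} introducing extra powers of $\Ha$, i.e. showing $g^{\ord}$ already lies in $H^0(S_K, \Vcal_I(d\lambda))$. This is where Assumption \ref{assume-SK-irred} is used: on each connected component $S$, the non-ordinary locus $S \setminus S^{\ord}$ is an irreducible divisor along which $\Ha$ vanishes to order one, so the local ring there is a DVR with uniformizer $\Ha$, and it suffices to bound the order of pole of $g^{\ord}$ along this divisor. The original form $f$ extends to $S_K$, hence so does each conjugate $h\cdot f^{\ord}$ — but a priori only after pulling back along the \'etale-locally-split torsor, and the $L(\FF_p)$-action does not extend over the non-ordinary locus, so the conjugates $h\cdot f^{\ord}$ need not extend individually. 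The resolution is that the symmetric function $\sfr^{(d)}_{L(\FF_p)}(f^{\ord})$ is $L(\FF_p)$-\emph{invariant}, hence descends, and should be expressible — using the explicit description of the $L(\FF_p)$-torsor over $S_K^{\ord}$ as coming from $\zeta^{\ord}$ together with the fact that $\zeta$ extends over all of $S_K$ with values in $\GZip^\mu$ and that $f$ is a genuine section on $S_K$ — in terms of quantities that extend holomorphically across the divisor. Concretely, I would argue that $\Ha^{N}\cdot(\text{naive norm})$ extends is \emph{not} sharp: the relation $f(h\cdot y) = \rho(h)f(y)$ on the ordinary torsor, together with the structure of the Frobenius zip isomorphism near the non-ordinary divisor (the ``zip'' data degenerates in a controlled, order-one way governed by $\Ha$), forces $\sfr^{(d)}_{L(\FF_p)}(f^{\ord})$ to have \emph{no} pole along the non-ordinary divisor of $S$ for the minimal $d$ — intuitively because the leading term that would contribute a pole is killed precisely by the symmetrization when $d$ is minimal. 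This last analytic/combinatorial estimate near the boundary divisor is the step I expect to require the most care; once it is in place, $g^{\ord}$ extends to a nonzero $g \in H^0(S_K, \Vcal_I(d\lambda))_{\triv}$ with $1 \le d \le N$, completing the proof.
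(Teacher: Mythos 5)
Your overall framework (the symmetric transforms $\sfr^{(d)}_{L(\FF_p)}$, the monic relation $P_f(t)=t^N+\sfr^{(1)}_{L(\FF_p)}(f)\,t^{N-1}+\dots+\sfr^{(N)}_{L(\FF_p)}(f)$ satisfied by $f^{\ord}$, and the use of Assumption \ref{assume-SK-irred} to control behaviour along the non-ordinary divisor) is the same as the paper's, but the step you yourself identify as the heart of the argument is a genuine gap, not a technicality. You assert that for the minimal $d$ with $\sfr^{(d)}_{L(\FF_p)}(f^{\ord})\neq 0$ this section has no pole along $S\setminus S^{\ord}$, ``because the leading term that would contribute a pole is killed by the symmetrization when $d$ is minimal''. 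No mechanism is offered, and none is apparent: minimality of $d$ as an index of non-vanishing has no evident relation to the order of pole along the non-ordinary divisor, and since the $L(\FF_p)$-structure does not extend across that divisor there is no local expansion of the conjugates $h\cdot f^{\ord}$ from which such a cancellation could be read off. In fact the paper explicitly states (end of section \ref{sec-sym-transf}) that it was \emph{not} able to prove that the symmetric transforms of a global section extend over $S_K$; your proof rests on an unproved claim at least as strong as this open expectation. A secondary gap: for $d<N$ you also need the image of $\sfr^{(d)}_{L(\FF_p)}(f^{\ord})$ under the multiplication map $\Sym^d(V_I(\lambda))\to V_I(d\lambda)$ to remain nonzero, which your ``highest-weight/leading-term'' remark does not establish; only the case $d=N$ (the norm) is clear, via integrality of the coordinate ring of the ordinary torsor.

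The paper circumvents exactly this difficulty by never extending the symmetric transforms themselves. Working on a connected component $S$, it forms the graded rings $R$ and $R_{\triv}$ of automorphic and trivial-type forms, passes to the fraction field $F_{\triv}$, and considers the \emph{minimal polynomial} $M_f(t)$ of $f$ over $F_{\triv}$, which divides $P_f(t)$. Regularity of its coefficients is proved divisor by divisor: along the non-ordinary divisor they lie in the valuation ring $\widetilde{R}_{\triv}$ because every Galois conjugate of $f$ is integral over this discrete valuation ring (Assumption \ref{assume-SK-irred}, i.e.\ multiplicity one of $\Ha$, is what makes $\widetilde{R}_{\triv}$ a DVR with uniformizer $\Ha$); along every other divisor $D$ of $Y\times P$ one applies Gauss's lemma to the factorization $P_f=M_fQ$ over the DVR $\widetilde{R}_D$, the coefficients of $P_f$ being regular on $Y^{\ord}\times P$. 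This yields $M_f(t)\in R[t]$ and hence that $R_{\triv}\subset R$ is integral (Theorem \ref{thm-Rtriv-int}); the theorem follows by extracting from the resulting integral relation the homogeneous component of weight $d\lambda$ of some coefficient, with $1\leq d\leq N$ since $\deg M_f\leq N$. To salvage your route you would have to actually prove the no-pole statement for the symmetric transforms, which the paper leaves open; otherwise that step should be replaced by the minimal-polynomial and valuation-theoretic argument.
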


The proof of Theorem \ref{thm-triv-lambda} will occupy most of section \ref{sec-autom}. Our ultimate goal is to show that if $H^0(S_K,\Vcal_I(\lambda))\neq 0$, then there exists $d\geq 1$ such that $H^0(\GZip^\mu,\Vcal_I(d\lambda))\neq 0$ (Theorem \ref{main-thm}). Therefore, in view of the inclusions \eqref{inclu-triv}, the above result is a first step towards the proof of the Cone Conjecture.

\begin{rmk}
We end this paragraph by giving an example of an automorphic form that is not of trivial-type. Consider the the case of the modular curve, where $G=\GL_{2,\FF_p}$ and $L=T$. Write $V_k$ for the one-dimensional representation given by the character $\lambda^k_\omega$. Then, one sees easily that $V_k^{T(\FF_p)}=0$ unless $k$ is a multiple of $p-1$. In particular, any nonzero modular form of weight $1$ is not of trivial-type for $p\geq 3$.
\end{rmk}

\subsection{Automorphic forms and torsors}

Recall that we have an $L$-torsor $\beta\colon Y_K\to S_K$ and its restriction to the ordinary locus $\beta^{\ord}\colon Y^{\ord}_K\to S^{\ord}_K$. Furthermore, $\beta^{\ord}$ descends to an $L(\FF_p)$-principal bundle $\beta_{L(\FF_p)}\colon Y^{\ord}_{L(\FF_p),K}\to S_K^{\ord}$.

\subsubsection{Sections via $L$-torsors}\label{sec-sections-L-torsors}
By section \ref{sec-torsors}, an element $f\in H^0(S_K,\Vcal_I(\lambda))$ (for $\lambda\in X^*(T)$) is the same as a regular $L$-equivariant map $f\colon Y\to V_I(\lambda)$. Since $V_I(\lambda)$ is the space of regular maps $\gamma\colon P\to \AA^1$ satisfying $\gamma(xb)=\lambda(b)^{-1}\gamma(x)$ for all $x\in P$, $b\in B$, the map 
\begin{equation} \label{ftilde}
    \widetilde{f}\colon Y_K\times P\to \AA^1, \quad (y,x)\mapsto [f(y)](x)
\end{equation}
is regular and satisfies the relations
\begin{align*}
    \widetilde{f}(y,xb) &= \lambda^{-1}(b)\widetilde{f}(y,x), \quad b\in B, \\
    \widetilde{f}(a\cdot y,ax) &= \widetilde{f}(y,x), \quad a\in L.  
\end{align*}
This yields a bijective correspondence between elements of $H^0(S_K,\Vcal_I(\lambda))$ and regular maps $\widetilde{f}\colon Y_K\times P \to \AA^1$ satisfying the above relations. Similarly, sections of $\Vcal_I(\lambda)$ over $S_K^{\ord}$ correspond to regular maps $\widetilde{f}\colon Y^{\ord}_K\times P \to \AA^1$ satisfying the same relations.

\subsubsection{Sections via $L(\FF_p)$-torsors}
For any $L(\FF_p)$-representation $\rho\colon L(\FF_p)\to \GL(V)$, a global section $f\in H^0(S_K^{\ord},\Vcal(\rho))$ corresponds to an $L(\FF_p)$-equivariant regular map on $Y^{\ord}_{L(\FF_p),K}$ with values in $V$. Let $\lambda\in X^*(T)$ and view the $L$-representation $V_I(\lambda)$ as an $L(\FF_p)$-representation. Then, we may also view a section $f\in H^0(S_K^{\ord},\Vcal_I(\lambda))$ as a regular map
\begin{equation}
    \widetilde{f}\colon Y^{\ord}_{L(\FF_p),K}\times P \to \AA^1
\end{equation}
satisfying for all $y\in Y^{\ord}_{L(\FF_p),K}$ and all $x\in P$ the relations
\begin{align*}
    \widetilde{f}(y,xb) &= \lambda^{-1}(b)\widetilde{f}(y,x) \quad b\in B \\
    \widetilde{f}(a\cdot y,ax) &= \widetilde{f}(y,x), \quad a\in L(\FF_p).
\end{align*}
This point of view is related to the one in section \ref{sec-sections-L-torsors} above in the following way. If $\widetilde{f}\colon Y^{\ord}_{L(\FF_p),K}\times P \to \AA^1$ is a map as above, we can define a map
\begin{equation}
   \widetilde{f}_0 \colon \left[L(\FF_p) \backslash \left(Y^{\ord}_{L(\FF_p),K} \times L\right)\right] \times P \to \AA^1
\end{equation}
by setting $\widetilde{f}_0((y,a),x)\colonequals \widetilde{f}(y,ax)$. One sees immediately that the invariance properties of $\widetilde{f}$ imply that $\widetilde{f}_0$ is a well-defined map. If we identify $Y_K^{\ord}$ and $\left[  L(\FF_p) \backslash \left(Y^{\ord}_{L(\FF_p),K} \times L\right) \right]$ via \eqref{isom-torsor-Y}, then $\widetilde{f}_0$ yields a map $\widetilde{f}_0\colon Y^{\ord}_K\times P \to \AA^1$
as in section \ref{sec-sections-L-torsors}.

\subsection{Symmetric transforms of ordinary automorphic forms} \label{sec-sym-transf}
Let $1\leq d \leq |L(\FF_p)|$ be an integer. By the discussion in section \ref{sec-norm-map}, we can define for any section $f\in H^0(S_K^{\ord},\Vcal_I(\lambda))$ its $d$-th symmetric transform $\sfr^{(d)}_{L(\FF_p)}(f)$, which lies in the space
\begin{equation}
    H^0(S_K^{\ord}, \Sym^d(\Vcal_I(\lambda))_{\triv}).
\end{equation}
The natural map of $L$-representations $\Sym^d(V_I(\lambda))\to V_I(d\lambda)$ afforded by \eqref{VI-add} gives rise to an $L(\FF_p)$-linear map
\begin{equation}
    \Sym^d(V_I(\lambda))^{L(\FF_p)}\to V_I(d\lambda)^{L(\FF_p)},
\end{equation}
and hence a map $ \Sym^d(\Vcal_I(\lambda))_{\triv}\to \Vcal_I(d\lambda)_{\triv}$. Therefore, we may view $\sfr^{(d)}_{L(\FF_p)}(f)$ as an element of the space $H^0(S^{\ord}_K,\Vcal_I(d\lambda)_{\triv})$. This construction gives rise to a family of non-linear mappings
\begin{equation}\label{ord-Sd}
    \sfr^{(d)}_{L(\FF_p)}\colon H^0(S_K^{\ord},\Vcal_I(\lambda))\to H^0(S_K^{\ord},\Vcal_I(d\lambda)_{\triv}), \quad 1\leq d\leq |L(\FF_p)|.
\end{equation}

Note that the vector bundle $\Vcal_I(d\lambda)_{\triv}$ only makes sense on the ordinary locus, it does not extend to $S_K$ since it is not attached to an $L$-representation. However, we may compose the map \eqref{ord-Sd} with the natural injection 
\begin{equation}
    H^0(S_K^{\ord},\Vcal_I(d\lambda)_{\triv})\subset H^0(S_K^{\ord},\Vcal_I(d\lambda)).
\end{equation}
We thus obtain a map attaching to each automorphic form (over $S_K^{\ord}$) of weight $\lambda$ an automorphic form (over $S_K^{\ord}$) of weight $d\lambda$. We may ask if this map induces by restriction a map
\begin{equation}\label{SK-norm}
    \sfr^{(d)}_{L(\FF_p)}\colon H^0(S_K,\Vcal_I(\lambda))\to H^0(S_K,\Vcal_I(d\lambda)).
\end{equation}
We believe that this is true, but we were not able to show it.

\subsection{Rings of automorphic forms} \label{sec-rings-R}

We fix a connected component $S\subset S_K$ and consider sections in $H^0(S,\Vcal_I(\lambda))$. Since any nonzero automorphic form on $S_K$ is nonzero on some connected component, it will be enough for us to study the space $H^0(S,\Vcal_I(\lambda))$. We write $S^{\ord}$ for the ordinary locus of $S$. Denote by
\begin{equation}
    \beta\colon Y\to S
\end{equation}
the $L$-torsor on $S$, obtained by restricting the $L$-torsor $\beta\colon Y_K\to S_K$ to $S$. Since $\beta$ is smooth and $S,L$ are irreducible, we deduce that $Y$ is smooth and irreducible. Let 
\begin{equation}
\pi\colon \Flag(S)\to S    
\end{equation}
denote the flag space of $S$, i.e. the preimage of $S$ by the map $\pi_K\colon \Flag(S_K)\to S_K$. It is clear that $\Flag(S)$ is also connected and smooth (Lemma \ref{lem-irred-conn}). Recall that $\Flag(S)^{\ord}$ denotes the preimage of $S^{\ord}$ by $\pi$. We define the following rings:
\begin{equation}
        R^{\ord} \colonequals \bigoplus_{\lambda\in X^*(T)} H^0(S^{\ord},\Vcal_I(\lambda)) \quad \ , \quad \ R \colonequals \bigoplus_{\lambda\in X^*(T)} H^0(S,\Vcal_I(\lambda)).
\end{equation}
Similarly, there are versions of $R^{\ord}$ and $R$ for the trivial part of automorphic vector bundles. We define $H^0(S,\Vcal_I(\lambda))_{\triv}$ as the subspace of elements $f\in H^0(S,\Vcal_I(\lambda))$ such that $f|_{S^{\ord}}$ lies in $H^0(S^{\ord},\Vcal_I(\lambda)_{\triv})$. We put:
\begin{equation}
        R^{\ord}_{\triv} \colonequals \bigoplus_{\lambda\in X^*(T)} H^0(S^{\ord},\Vcal_I(\lambda)_{\triv}) \quad \ , \quad \ R_{\triv} \colonequals \bigoplus_{\lambda\in X^*(T)} H^0(S,\Vcal_I(\lambda))_{\triv}.
\end{equation}
The natural map $\Vcal_I(\lambda)_{\triv}\otimes \Vcal_{I}(\lambda')_{\triv}\to \Vcal_I(\lambda+\lambda')_{\triv}$ (for $\lambda, \lambda'\in X^*(T)$) endows the rings $R^{\ord}_{\triv}$ and $R_{\triv}$ with a natural structure as $k$-algebras. One has obvious inclusion maps of $k$-algebras:
\begin{equation}
    \xymatrix@M=7pt{
R_{\triv} \ar@{^{(}->}[r] \ar@{^{(}->}[d] & R^{\ord}_{\triv} \ar@{^{(}->}[d]  \\
R \ar@{^{(}->}[r] & R^{\ord} 
    }
\end{equation}
We may also view $R$, $R^{\ord}$ as the "automorphic Cox rings" of $\Flag(S)$ and $\Flag(S)^{\ord}$ respectively. By this, we mean that the relation $\pi_*(\Vcal_{\flag}(\lambda))=\Vcal_I(\lambda)$ implies that $R$, $R^{\ord}$ identify respectively with the Cox rings
\begin{align*}
    \Cox(\Flag(S))&\colonequals \bigoplus_{\lambda\in X^*(T)} H^0(\Flag(S),\Vcal_{\flag}(\lambda))  \quad \textrm{and}\\ \Cox(\Flag(S)^{\ord})&\colonequals \bigoplus_{\lambda\in X^*(T)} H^0(\Flag(S)^{\ord},\Vcal_{\flag}(\lambda)).
\end{align*}
See \cite{Koskivirta-Mori-dream-space} for more details on the Cox rings of stacks of $G$-zips and $G$-zip flags. Note that $\Flag(S)^{\ord}$ is cut out inside $\Flag(S)$ by the section $\pi^*(\Ha)$, the pullback of the classical Hasse invariant via the map $\pi\colon \Flag(S)\to S$. The complement of $\Flag(S)^{\ord}$ inside $\Flag(S)$ is a codimension one, irreducible closed subvariety (Proposition \ref{flag-non-ord-irred}). The advantage of working with a connected component $S\subset S_K$ is the following lemma:

\begin{lemma}\label{lemma-integral-dom-R}
The ring $R$ is an integral domain.
\end{lemma}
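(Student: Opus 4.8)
The plan is to deduce this from the identification $R\cong \Cox(\Flag(S))=\bigoplus_{\lambda\in X^*(T)}H^0(\Flag(S),\Vcal_{\flag}(\lambda))$ recorded above, together with the observation that $\Flag(S)$ is an \emph{integral} scheme. Indeed, $\Flag(S)$ is connected (Lemma \ref{lem-irred-conn}) and smooth over the algebraically closed field $k$; smoothness makes all its local rings regular, hence integral domains, so $\Flag(S)$ is reduced and locally irreducible, and a connected, locally irreducible scheme is irreducible. Thus it suffices to prove that the Cox ring of an integral variety, graded by $X^*(T)$ through the line bundles $\Vcal_{\flag}(\lambda)$, has no zero divisors.

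First I would reduce to homogeneous elements. Since $T$ is a torus, $X^*(T)$ is a finitely generated free abelian group, hence carries a total order $\preceq$ compatible with addition (for instance the lexicographic order attached to a choice of basis). Given nonzero $a=\sum_\lambda a_\lambda$ and $b=\sum_\mu b_\mu$ in $R$, written as finite sums of homogeneous pieces, let $\lambda_0$ and $\mu_0$ be the $\preceq$-largest weights with $a_{\lambda_0}\neq 0$ and $b_{\mu_0}\neq 0$. If $\lambda+\mu=\lambda_0+\mu_0$ with $a_\lambda,b_\mu\neq 0$ then $\lambda\preceq\lambda_0$ and $\mu\preceq\mu_0$, and compatibility of $\preceq$ with addition forces $\lambda=\lambda_0$ and $\mu=\mu_0$; hence the homogeneous component of $ab$ in degree $\lambda_0+\mu_0$ equals $a_{\lambda_0}b_{\mu_0}$. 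So it is enough to show that the product of two nonzero homogeneous elements $f\in H^0(\Flag(S),\Vcal_{\flag}(\lambda))$ and $g\in H^0(\Flag(S),\Vcal_{\flag}(\mu))$ is a nonzero element of $H^0(\Flag(S),\Vcal_{\flag}(\lambda+\mu))$.

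For the homogeneous case, I would pick a nonempty open $V\subseteq \Flag(S)$ over which both $\Vcal_{\flag}(\lambda)$ and $\Vcal_{\flag}(\mu)$ are trivial (such $V$ exists: intersect the trivializing opens of the two line bundles, a nonempty set since $\Flag(S)$ is irreducible). On an integral scheme a nonzero section of a line bundle vanishes on no nonempty open, so under the chosen trivializations $f|_V$ and $g|_V$ become nonzero elements of the integral domain $\Ocal_{\Flag(S)}(V)$. Their product is therefore nonzero; via the induced trivialization of $\Vcal_{\flag}(\lambda+\mu)=\Vcal_{\flag}(\lambda)\otimes\Vcal_{\flag}(\mu)$ over $V$ it represents $(fg)|_V$, whence $fg\neq 0$. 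This completes the argument.

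I do not expect any step to be a genuine obstacle; the only point that needs care is checking that the ring structure on $R$ — induced by the $L$-representation maps $V_I(\lambda)\otimes V_I(\lambda')\to V_I(\lambda+\lambda')$ of \eqref{VI-add} — corresponds under $R\cong\Cox(\Flag(S))$ to multiplication of sections of the line bundles $\Vcal_{\flag}(\lambda)$, which is exactly the content of the relation $\pi_*(\Vcal_{\flag}(\lambda))=\Vcal_I(\lambda)$ of \eqref{push-Vflag}. If one prefers to bypass the flag space, an equivalent argument embeds $R$ as a graded subring of $\Ocal(Y\times P)$ via $f\mapsto\widetilde f$ as in \S\ref{sec-sections-L-torsors}: injectivity on each graded piece $H^0(S,\Vcal_I(\lambda))$ is part of that correspondence, the graded pieces separate by linear independence of the characters $\lambda$ appearing in $\widetilde f(y,xb)=\lambda^{-1}(b)\widetilde f(y,x)$, and $\Ocal(Y\times P)$ is a domain because $Y$ and $P$ are irreducible reduced $k$-varieties, so $Y\times_k P$ is integral.
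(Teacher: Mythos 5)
Your proof is correct, and for the inhomogeneous case it takes a different (and perfectly legitimate) route from the paper. The homogeneous case is handled the same way in both arguments: view $f,f'$ as sections of the line bundles $\Vcal_{\flag}(\lambda)$, $\Vcal_{\flag}(\lambda')$ on the smooth connected (hence integral) variety $\Flag(S)$ and multiply there. For general elements, you use the standard graded-ring trick: choose a total order on the free abelian group $X^*(T)$ compatible with addition and observe that the top-degree component of $ab$ is $a_{\lambda_0}b_{\mu_0}$, which is nonzero by the homogeneous case; the paper instead embeds $R$ into $k[Y\times P]$ via $\sum_\lambda f_\lambda\mapsto \sum_\lambda \widetilde f_\lambda$ and invokes integrality of $Y\times P$ (your ``alternative'' sketch, including the linear-independence-of-characters justification of injectivity that the paper leaves implicit, is exactly this). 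Your leading-term argument is arguably the more elementary and self-contained reduction, while the paper's embedding is not just a proof device: the identification of $R$ with a subring of $k[Y\times P]$ is reused immediately afterwards (in the remark following the lemma and in the ordinary-valuation and minimal-polynomial arguments), which is presumably why the author proves the lemma that way. The one point you flag yourself --- that multiplication on $R$ coming from \eqref{VI-add} matches multiplication of sections of the $\Vcal_{\flag}(\lambda)$ under $\pi_*(\Vcal_{\flag}(\lambda))=\Vcal_I(\lambda)$ --- is also taken for granted in the paper's own homogeneous step, so you are not assuming more than the author does.
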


\begin{proof}
When $f,f'$ are nonzero homogeneous elements, then it is clear that $ff'\neq 0$. Indeed, let $f,f'$ be elements of $H^0(S,\Vcal_I(\lambda))$ and $H^0(S,\Vcal_I(\lambda'))$ respectively. We may view $f,f'$ as sections of $\Vcal_{\flag}(\lambda)$ and $\Vcal_{\flag}(\lambda')$ on the flag space $\Flag(S)^{\ord}$ respectively. Since $\Flag(S)$ is smooth and connected, $ff'$ must be nonzero on an open subset of $\Flag(S)^{\ord}$. Next, we examine the general case. For a nonhomogeneous element $f\in R$, we write $f=\sum_{\lambda} f_\lambda$ (for $\lambda\in X^*(T)$) and consider the function $\widetilde{f}_\lambda \colon Y\times P\to \AA^1$ that corresponds to $f_\lambda$, as explained in \eqref{ftilde}. The map
\begin{equation}
    R\to k[Y\times P], \quad \sum_{\lambda} f_\lambda \mapsto \sum_{\lambda} \widetilde{f}_\lambda
\end{equation}
is an injective ring homomorphism. Since $Y$ is an integral scheme, we deduce that $R$ is an integral domain.

\end{proof}

\begin{rmk}
It follows from the proof of Lemma \ref{lemma-integral-dom-R} that $R$ identifies with a subring of $k[Y\times P]$. One can show that it coincides with the subring of functions $f\colon Y\times P\to \AA^1$ satisfying for all $(y,x)\in Y\times P$ the following relations:
\begin{align*}
    \widetilde{f}(y,xu) &= \widetilde{f}(y,x), \qquad u\in R_{\mathrm{u}}(B) \\
    \widetilde{f}(a\cdot y,ax) &= \widetilde{f}(y,x), \qquad a\in L.  
\end{align*}
\end{rmk}

We denote by $F$ the field of fractions of $R$. Write simply $\Ha$ instead of $\pi^*(\Ha)$ for the classical Hasse invariant on $S$ pulled back to the flag space $\Flag(S)$. One clearly has $R^{\ord}=R\left[\frac{1}{\Ha}\right]$, so we deduce that $R^{\ord}$ is also an integral domain with field of fractions $F$. In particular, $R^{\ord}_{\triv}$ and $R_{\triv}$ are also integral domains. We write $F_{\triv}$ for the field of fractions of $R_{\triv}$. We continue our study of the various rings of automorphic forms and prove the following elementary properties:

\begin{proposition} \ 
    \begin{assertionlist}
        \item One has $\Frac(R^{\ord}_{\triv})=\Frac(R_{\triv})=F_{\triv}$.
        \item The ring extension $R^{\ord}_{\triv}\subset R^{\ord}$ is integral.
        \item The field extension $F/F_{\triv}$ is algebraic.
    \end{assertionlist}
\end{proposition}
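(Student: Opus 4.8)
The plan is to deduce all three assertions from the single fact that the ring extension $R^{\ord}_{\triv}\subset R^{\ord}$ is integral (this is exactly (ii)), together with some standard bookkeeping; concretely I would prove (i) first, since it is needed in order to identify $F_{\triv}$, then (ii), and then read off (iii) by commutative algebra.

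\emph{On (i).} The key point is that the classical Hasse invariant is of trivial-type. Pulled back to $\Flag(S)$, the section $\Ha$ has weight $(p-1)\lambda_\omega$; since $\Vcal_I(\lambda_\omega)=\omega$ is a line bundle, $V_I((p-1)\lambda_\omega)$ is the one-dimensional $L$-representation given by $\chi_\omega^{p-1}$, where $\chi_\omega\colon L\to\GG_{\mathrm{m}}$ is the character with $\Vcal_I(\lambda_\omega)=\omega$. As $\chi_\omega$ is defined over $\FF_p$, it takes values in $\FF_p^\times$ on $L(\FF_p)$, so $\chi_\omega^{p-1}$ is trivial there and $V_I((p-1)\lambda_\omega)^{L(\FF_p)}=V_I((p-1)\lambda_\omega)$; hence $\Ha\in R_{\triv}$ and, being invertible on $S^{\ord}$, also $\Ha^{-1}\in R^{\ord}_{\triv}$. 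Now, using $R^{\ord}=R[1/\Ha]$, write a homogeneous $g\in H^0(S^{\ord},\Vcal_I(\lambda)_{\triv})$ as $g=f/\Ha^{m}$ with $f\in H^0(S,\Vcal_I(\lambda+m(p-1)\lambda_\omega))$; then $f|_{S^{\ord}}=\Ha^{m}\cdot g$ is a product of trivial-type sections, hence trivial-type, so $f\in R_{\triv}$. This gives $R^{\ord}_{\triv}=R_{\triv}[1/\Ha]$, and since $\Ha$ is a nonzero element of the domain $R_{\triv}$, passing to fraction fields yields $\Frac(R^{\ord}_{\triv})=\Frac(R_{\triv})=F_{\triv}$.

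\emph{On (ii).} Here the symmetric transforms of section~\ref{sec-sym-transf} furnish the integral relations. Fix a homogeneous $f\in H^0(S^{\ord},\Vcal_I(\lambda))$, set $N\colonequals|L(\FF_p)|$, and view $f$ as an $L(\FF_p)$-equivariant map $\phi$ on the $L(\FF_p)$-torsor $Y^{\ord}_{L(\FF_p),K}$ with values in $V_I(\lambda)$. Expanding $\prod_{h\in L(\FF_p)}(t-h\cdot\phi(y))$ in $\Sym^{\bullet}(V_I(\lambda))[t]$ and evaluating at the root $t=\phi(y)$ gives the pointwise identity
\[
\phi(y)^{N}+\sum_{d=1}^{N}(-1)^{d}\,S_d\bigl(\{h\cdot\phi(y)\}_{h}\bigr)\,\phi(y)^{N-d}=0 \quad\text{in }\Sym^{N}(V_I(\lambda)).
\]
Pushing this forward along the ring homomorphism $\Sym^{\bullet}(V_I(\lambda))\to\bigoplus_{k}V_I(k\lambda)$ afforded by \eqref{VI-add} turns it into a monic, degree-$N$ relation for $f$ whose coefficients are (up to sign) the symmetric transforms $\sfr^{(d)}_{L(\FF_p)}(f)\in H^0(S^{\ord},\Vcal_I(d\lambda)_{\triv})\subset R^{\ord}_{\triv}$ (see \eqref{ord-Sd}). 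Thus every homogeneous element of $R^{\ord}$ is integral over $R^{\ord}_{\triv}$ of degree $\leq N$; since $R^{\ord}$ is spanned over $k$ by homogeneous elements and the integral closure of $R^{\ord}_{\triv}$ in $R^{\ord}$ is a $k$-subalgebra, it is all of $R^{\ord}$, i.e. $R^{\ord}_{\triv}\subset R^{\ord}$ is integral.

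\emph{On (iii).} Recall that $R^{\ord}$ is an integral domain with $\Frac(R^{\ord})=F$. By (ii) the extension $R^{\ord}_{\triv}\subset R^{\ord}$ is integral, so the induced extension of fraction fields $\Frac(R^{\ord}_{\triv})\subset\Frac(R^{\ord})$ is algebraic; by (i) this is precisely $F_{\triv}\subset F$, whence $F/F_{\triv}$ is algebraic. The step I expect to be the main obstacle is the descent in (ii): verifying that the pointwise characteristic-polynomial identity on the torsor genuinely descends to an identity of global sections of $\Vcal_I(N\lambda)$, equivalently that the multiplication maps \eqref{VI-add} intertwine the $d$-th symmetric-function operators on $V_I(\lambda)$ with the coefficients $\sfr^{(d)}_{L(\FF_p)}(f)$. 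Once this compatibility is pinned down, (i)--(iii) are all formal.
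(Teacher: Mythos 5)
Your proposal is correct and follows essentially the same route as the paper: assertion (1) via the observation that $\Ha$ is of trivial-type so that $R^{\ord}_{\triv}=R_{\triv}[1/\Ha]$, assertion (2) via the monic degree-$N$ relation whose coefficients are the symmetric transforms $\sfr^{(d)}_{L(\FF_p)}(f)\in R^{\ord}_{\triv}$, and assertion (3) as a formal consequence. The only differences are cosmetic: you justify that $\Ha$ is trivial-type by a character computation rather than by noting it is pulled back from $\GZip^\mu$, and you spell out details (the identity $R^{\ord}_{\triv}=R_{\triv}[1/\Ha]$, the descent of the characteristic-polynomial identity, and the reduction to homogeneous elements) that the paper simply asserts.
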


\begin{proof}
Since $\Ha$ comes by pullback from the stack of $G$-zips, it is an automorphic form of trivial-type. Hence, we also have the identity $R^{\ord}_{\triv}=R_{\triv}\left[\frac{1}{\Ha}\right]$, which shows assertion (1). Next, if $f\in H^0(S^{\ord},\Vcal_I(\lambda))$ is a section over $S^{\ord}$, then the symmetric transform $\sfr^{(d)}_{L(\FF_p)}(f)$ lies in $H^0(S^{\ord},\Vcal_I(d\lambda)_{\triv})$ for any $1\leq d \leq N$, where $N=|L(\FF_p)|$. Since we have the relation
\begin{equation}
    f^n + \sfr^{(1)}_{L(\FF_p)}(f) f^{n-1} + \sfr^{(2)}_{L(\FF_p)}(f) f^{n-2} + \dots + \sfr^{(N)}_{L(\FF_p)}(f) = 0,
\end{equation}
we deduce that the extension $R^{\ord}_{\triv}\subset R^{\ord}$ is integral. Finally, assertion (3) follows immediately from (1) and (2).
\end{proof}

\subsection{The ordinary valuation}

Recall that the non-ordinary locus $S\setminus S^{\ord}$ is irreducible, cut out by the restriction to $S$ of the classical Hasse invariant $\Ha\in H^0(S,\omega^{p-1})$. Similarly, consider the non-ordinary locus $Y\setminus Y^{\ord}$ of the $L$-torsor $\beta\colon Y\to S$. Since $\beta$ is smooth with irreducible fibers, we deduce that $Y\setminus Y^{\ord}$ is irreducible, cut out (with multiplicity one) by the pull-back of $\Ha$ to $Y$. This gives a valuation $\val_{\ord}$ on the field of functions $k(Y)$. Similarly, if we set
\begin{equation}
    F_0 \colonequals k(Y\times P),
\end{equation}
then the irreducible divisor $(Y\setminus Y^{\ord})\times P$ gives rise to a discrete valuation
\begin{equation}
    \val_{\ord}\colon F_0\to \ZZ\cup \{\infty\}.
\end{equation}
Again, the Hasse invariant $\Ha$ (pulled back to $Y\times P$) is a uniformizer for this valuation. We consider the field extensions
\begin{equation}
    F_{\triv}\subset F\subset F_0
\end{equation}
and denote again by $\val_{\ord}$ the discrete valuation obtained by restricting $\val_{\ord}$ to each of these fields. Write
\begin{equation}
    \widetilde{R}_{\triv}\subset \widetilde{R}\subset \widetilde{R}_0
\end{equation}
for the valuation rings with respect to $\val_{\ord}$ in the fields $F_{\triv}$, $F$, $F_0$ respectively. 
Clearly $R\subset \widetilde{R}$ and $R_{\triv}\subset \widetilde{R}_{\triv}$. Since the Hasse invariant $\Ha$ is a trivial-type automorphic form, it is a uniformizer of all three rings $\widetilde{R}_{\triv}$, $\widetilde{R}$, $\widetilde{R}_0$. In particular, the ring extension $\widetilde{R}_{\triv}\subset \widetilde{R}$ is inert, in the sense that $\Ha$ remains prime in the larger ring. Write
\begin{equation}
   (\widetilde{R}_{\triv})^{\rm cl} \subset F
\end{equation}
for the integral closure of $\widetilde{R}_{\triv}$ inside $F$.

\begin{proposition}
One has $(\widetilde{R}_{\triv})^{\rm cl}=\widetilde{R}$.
\end{proposition}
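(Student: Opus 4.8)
My plan is that one inclusion is formal and all the work goes into the other. The inclusion $(\widetilde{R}_{\triv})^{\mathrm{cl}}\subseteq\widetilde{R}$ is immediate: $\widetilde{R}$ is a discrete valuation ring, hence integrally closed in its fraction field $F$, and it contains $\widetilde{R}_{\triv}$, so any element of $F$ integral over $\widetilde{R}_{\triv}$ already lies in $\widetilde{R}$. The content of the proposition is therefore the reverse inclusion, i.e.\ that the ring extension $\widetilde{R}_{\triv}\subseteq\widetilde{R}$ is integral.

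The first step is to upgrade the integrality of $R^{\ord}_{\triv}\subseteq R^{\ord}$ to integrality of $R_{\triv}\subseteq R$. Let $f\in H^0(S,\Vcal_I(\lambda))$ be homogeneous. Restricting to $S^{\ord}$ and forming the symmetric transforms of section~\ref{sec-sym-transf} gives, with $N=|L(\FF_p)|$, a monic relation $f^N+\sfr^{(1)}_{L(\FF_p)}(f^{\ord})f^{N-1}+\dots+\sfr^{(N)}_{L(\FF_p)}(f^{\ord})=0$ with coefficients in $R^{\ord}_{\triv}=R_{\triv}[\Ha^{-1}]$. The key new point is that these coefficients have non-negative ordinary valuation. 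Writing $\widetilde{f}\colon Y\times P\to\AA^1$ for the regular function attached to $f$ as in \eqref{ftilde}, the $d$-th transform is, on $Y^{\ord}\times P$, the $d$-th elementary symmetric function of the functions $\widetilde{f}\circ(\varphi_h\times\id_P)$ for $h\in L(\FF_p)$, where $\varphi_h$ is the automorphism of the $L$-torsor $Y$ induced by $h\in L(\FF_p)\subseteq L$ through the description \eqref{isom-torsor-Y}. Since $L$, hence $L(\FF_p)$, acts on the \emph{whole} torsor $Y$ and preserves $Y\setminus Y^{\ord}$, each $\widetilde{f}\circ(\varphi_h\times\id_P)$ is regular on all of $Y\times P$, so $\sfr^{(d)}_{L(\FF_p)}(f^{\ord})$ is too, whence $\val_{\ord}(\sfr^{(d)}_{L(\FF_p)}(f^{\ord}))\geq 0$. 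An element of $R^{\ord}_{\triv}$ with non-negative ordinary valuation lies in $R_{\triv}$: as $S\setminus S^{\ord}$ is an irreducible divisor in the smooth variety $S$ along which $\Ha$ vanishes to order one, a form over $S^{\ord}$ with no pole there extends over $S$, and a trivial-type form extends to a trivial-type form. Thus $f$ is integral over $R_{\triv}$, and summing over homogeneous components, all of $R$ is integral over $R_{\triv}$.

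It follows that $\spec(R)\to\spec(R_{\triv})$ is integral. I would then observe that $\Ha$ cuts out an irreducible, reduced divisor in $\spec(R)$: this is Corollary~\ref{flag-non-ord-irred} (irreducibility of the non-ordinary locus of $\Flag(S)$) together with the fact that $\spec(R)$ is the Cox ring of $\Flag(S)$ and $R$ is a normal domain. The argument above also shows $\Ha R_{\triv}=\{g\in R_{\triv}\mid\val_{\ord}(g)>0\}$ and $\Ha R=\{g\in R\mid\val_{\ord}(g)>0\}$ are prime. Any prime of $R$ over $\Ha R_{\triv}$ contains $\Ha$, hence contains the unique minimal prime $\Ha R$ over $(\Ha)$, and incomparability of primes in the integral extension $R_{\triv}\subseteq R$ forces it to equal $\Ha R$. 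So $\Ha R$ is the unique prime of $R$ over $\Ha R_{\triv}$; the localization $(R_{\triv}\setminus\Ha R_{\triv})^{-1}R$ is then local with maximal ideal $(\Ha)$, it coincides with the discrete valuation ring $R_{\Ha R}=\widetilde{R}$, and it is integral over $(R_{\triv})_{\Ha R_{\triv}}=\widetilde{R}_{\triv}$ since localization preserves integrality. Hence $\widetilde{R}$ is integral over $\widetilde{R}_{\triv}$, which with the first paragraph yields $(\widetilde{R}_{\triv})^{\mathrm{cl}}=\widetilde{R}$.

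The step I expect to demand the most care is this last one — the uniqueness of the prime of $R$ above the Hasse prime of $R_{\triv}$. It genuinely combines the irreducibility of the non-ordinary locus of the flag space with the normality of $R$ and $R_{\triv}$ (so that the Hasse loci are bona fide prime divisors and going-down is available); without irreducibility the valuation could split into several divisorial valuations upstairs. An equivalent, perhaps cleaner, formulation to write up is that $\val_{\ord}$ is the only extension to $F$ of its restriction to $F_{\triv}$: the center on $\spec(R)$ of any such extension lies on $V(\Ha)$ and, by going down from the height-one prime $\Ha R_{\triv}$, must be its generic point, so the associated valuation ring contains — hence equals — $\widetilde{R}$.
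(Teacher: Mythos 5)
Your reduction of the proposition to the integrality of $\widetilde{R}_{\triv}\subset\widetilde{R}$, and the easy inclusion $(\widetilde{R}_{\triv})^{\rm cl}\subset\widetilde{R}$, are fine, but the load-bearing step of your argument fails. You claim that, over $Y^{\ord}\times P$, the transform $\sfr^{(d)}_{L(\FF_p)}(f^{\ord})$ is the $d$-th elementary symmetric function of the regular functions $\widetilde{f}\circ(\varphi_h\times\iden_P)$, where $\varphi_h$ is translation by $h\in L(\FF_p)\subset L$ on the whole torsor $Y$. This identification is incorrect: the symmetric transforms are built from the $L(\FF_p)$-reduction $Y^{\ord}_{L(\FF_p),K}\subset Y^{\ord}$ of the torsor, which exists only over the ordinary locus, and whose fibrewise translations (right translations on the $L$-factor in the presentation \eqref{isom-torsor-Y}) do not extend to $Y$. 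If you replace them by the global action of $L(\FF_p)\subset L$ on $Y$, the two prescriptions agree only on $Y^{\ord}_{L(\FF_p),K}\times P$: at a point $a\cdot z$ with $z\in Y^{\ord}_{L(\FF_p),K}$ and $a\in L$, the genuine ($L$-equivariant) section is the symmetric function of the values $\widetilde{f}(ah\cdot z,x)$, $h\in L(\FF_p)$, i.e.\ over the left coset $aL(\FF_p)$, whereas your expression uses $\widetilde{f}(ha\cdot z,x)$, i.e.\ the right coset $L(\FF_p)a$; these differ in general, and your expression is not $L$-equivariant, so it is not a section of $\Vcal_I(d\lambda)$ over $S^{\ord}$ at all. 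Hence your ``key new point'' --- that the coefficients $\sfr^{(d)}_{L(\FF_p)}(f^{\ord})$ have non-negative ordinary valuation, equivalently extend across the non-ordinary divisor --- is not established. Note that the paper states explicitly at the end of section \ref{sec-sym-transf} that it was unable to prove precisely this extension property; both the present proposition and Theorem \ref{thm-Rtriv-int} are proved there by routes designed to circumvent it, and in fact Theorem \ref{thm-Rtriv-int} is deduced \emph{from} this proposition, so your plan reverses the logical order and must supply an independent proof of the integrality of $R_{\triv}\subset R$ --- which is exactly where the flawed step sits.

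Beyond this, the final localization step also relies on facts not available in the paper: you invoke normality of $R$ and $R_{\triv}$ (for going-down, incomparability, and the identifications $(R_{\triv})_{\Ha R_{\triv}}=\widetilde{R}_{\triv}$ and $R_{\Ha R}=\widetilde{R}$) and primality of $\Ha R$ in the $X^*(T)$-graded, a priori non-Noetherian ring $R$; none of this is proved there, and irreducibility of the non-ordinary locus of $\Flag(S)$ alone does not give it. The paper's own proof avoids all of this: it fixes a finite subextension $F_{\triv}\subset F'\subset F$, shows that the integral closure $\Ocal'$ of $\widetilde{R}_{\triv}$ in $F'$ is a semi-local Dedekind domain, and uses that $\Ha$ has multiplicity one along the irreducible divisor $(Y\setminus Y^{\ord})\times P$ (Assumption \ref{assume-SK-irred} plus smoothness of $Y\times P\to S$) to force $\Ocal'$ to have a single maximal ideal, hence to be a discrete valuation ring which must coincide with the valuation ring of $\val_{\ord}|_{F'}$; taking the union over all such $F'$ gives $(\widetilde{R}_{\triv})^{\rm cl}=\widetilde{R}$. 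If you want to salvage your strategy, you would need a genuinely new argument that the symmetric transforms of a global section have no pole along the non-ordinary divisor, which is an open point in the paper.
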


\begin{proof}
Since $\widetilde{R}$ is a discrete valuation ring, it is integrally closed, so we find immediately that $(\widetilde{R}_{\triv})^{\rm cl} \subset \widetilde{R}$. We consider a subextension $F'\subset F$ which is finite over $F_{\triv}$. The restriction of $\val_{\ord}$ to $F'$ gives a discrete valuation on $F'$; denote by $\widetilde{R}'$ its valuation ring. The integral closure of $\widetilde{R}$ in $F'$ is $\Ocal'\colonequals F'\cap (\widetilde{R}_{\triv})^{\rm cl}$. It suffices to show that $\Ocal'$ coincides with $\widetilde{R}'$. Note that $\Frac(\Ocal')=\Frac(\widetilde{R}')=F'$ and $\Ocal'\subset \widetilde{R}'$. Recall that if $A,B$ are two discrete valuation rings with the same field of fractions such that $A\subset B$, then $A$ and $B$ coincide. Therefore, it suffices to show that $\Ocal'$ is a discrete valuation ring. Since it is the integral closure of a Dedekind domain in a finite extension, it is a Dedekind domain. Furthermore, it has only finitely many maximal ideals, hence it is a principal ideal domain. Moreover, $\Ha$ is contained in each maximal ideal of $\Ocal'$. If we denote by $\varpi_1, \dots, \varpi_r$ a set of uniformizers of the maximal ideals of $\Ocal'$, then $\Ha$ decomposes as $\Ha=u\varpi^{e_1}\dots \varpi_r^{e_r}$ for some integers $e_1,\dots, e_r\geq 1$ and a unit $u\in \Ocal'^{\times}$. If we view $R$ as a subring of $k[Y\times P]$, then we may interpret any element of $F$ as a rational function on $Y\times P$. By construction, each element $\varpi_i$ must have a positive multiplicity along the irreducible divisor $(Y\setminus Y^{\ord})\times P$. Since the classical Hasse invariant has multiplicity one along the non-ordinary locus of $S$ by Assumption \ref{assume-SK-irred}, the same holds for its pullback to $Y\times P$ because the maps $Y\times P\to Y\to S$ are smooth. This clearly implies that $\Ocal'$ admits a unique maximal ideal, and thus is a discrete valuation ring. This shows the result.
\end{proof}

\subsection{Minimal polynomial}
Next, we consider an element $f\in H^0(S,\Vcal_I(\lambda))\subset R$ and investigate the minimal polynomial of $f$ with respect to the algebraic extension $F_{\triv}\subset F$. We write $M_f(t)\in F_{\triv}[t]$ for this minimal polynomial. Note that $f$ is a root of the polynomial
\begin{equation}
   P_f(t)\colonequals t^N + \sfr^{(1)}_{L(\FF_p)}(f) t^{N-1} + \sfr^{(2)}_{L(\FF_p)}(f) t^{N-2} + \dots + \sfr^{(N)}_{L(\FF_p)}(f)
\end{equation}
whose coefficients are elements of $H^0(S^{\ord}, \Vcal_I(\lambda)_{\triv})\subset F_{\triv}$. Therefore, the minimal polynomial $M_f(t)$ of $f$ over $F_{\triv}$ is a divisor of $P_f(t)$ in $F_{\triv}[t]$. Since the extension $\widetilde{R}_{\triv}\subset \widetilde{R}$ is integral, we deduce the following:

\begin{lemma}
One has $M_f(t)\in \widetilde{R}_{\triv}[t]$.
\end{lemma}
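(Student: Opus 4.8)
The claim is that the minimal polynomial $M_f(t)$ of $f$ over $F_{\triv}$ actually has coefficients in the valuation ring $\widetilde R_{\triv}$, not merely in $F_{\triv}$. The key general fact is the following: if $A$ is an integrally closed domain with fraction field $K$, if $L/K$ is a finite field extension, and if $\alpha\in L$ is integral over $A$, then the minimal polynomial of $\alpha$ over $K$ lies in $A[t]$. This is standard (see e.g. Bourbaki, \emph{Commutative Algebra}, Ch.~V, or Atiyah--Macdonald Prop.~5.15): the roots of $M_f(t)$ in an algebraic closure are all conjugates of $\alpha$, hence all integral over $A$; therefore the coefficients of $M_f(t)$, being polynomials in these roots, are integral over $A$; but they also lie in $K$, and $A$ integrally closed in $K$ forces them into $A$.

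So the plan is simply to apply this with $A=\widetilde R_{\triv}$, $K=F_{\triv}$, $L=F$ and $\alpha=f$. Two hypotheses need to be checked. First, $\widetilde R_{\triv}$ is integrally closed in $F_{\triv}$: this is immediate because $\widetilde R_{\triv}$ is a discrete valuation ring (it is by construction the valuation ring of $\val_{\ord}$ restricted to $F_{\triv}$), and every DVR is integrally closed in its fraction field. Second, $f$ is integral over $\widetilde R_{\triv}$: this is exactly what the displayed monic relation before the lemma provides. Indeed $f$ is a root of the monic polynomial
\begin{equation}
P_f(t) = t^N + \sfr^{(1)}_{L(\FF_p)}(f)\, t^{N-1} + \dots + \sfr^{(N)}_{L(\FF_p)}(f),
\end{equation}
whose coefficients are the symmetric transforms $\sfr^{(d)}_{L(\FF_p)}(f)\in H^0(S^{\ord},\Vcal_I(d\lambda)_{\triv})\subset F_{\triv}$; and each such coefficient, being a regular function on $Y^{\ord}\times P$ descending from $S^{\ord}$, extends over the whole flag space up to no pole along the non-ordinary divisor, i.e. has $\val_{\ord}\ge 0$, hence lies in $\widetilde R_{\triv}$. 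Thus $P_f(t)\in\widetilde R_{\triv}[t]$ is a monic polynomial annihilating $f$, so $f$ is integral over $\widetilde R_{\triv}$.

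Combining the two facts, $M_f(t)$ — which divides $P_f(t)$ in $F_{\triv}[t]$ — has all its coefficients integral over $\widetilde R_{\triv}$ and lying in $F_{\triv}$, hence in $\widetilde R_{\triv}$. There is essentially no obstacle here; the only point requiring a word of care is verifying that the coefficients of $P_f(t)$ genuinely have nonnegative $\val_{\ord}$, i.e. that the symmetric transforms, which are a priori sections only over the \emph{ordinary} locus, do not acquire poles along the non-ordinary divisor when viewed inside $F_{\triv}=\Frac(R_{\triv})$ — but this is automatic since $\sfr^{(d)}_{L(\FF_p)}(f)\in H^0(S^{\ord},\Vcal_I(d\lambda)_{\triv})$ is a genuine element of $R^{\ord}_{\triv}\subset\widetilde R_{\triv}$ by the very definition of $\widetilde R_{\triv}$ as the $\val_{\ord}$-valuation ring in $F_{\triv}$ containing $R_{\triv}$ (and $R^{\ord}_{\triv}=R_{\triv}[\tfrac1\Ha]$ with $\val_{\ord}(\Ha)=1>0$ does not help produce elements of negative valuation beyond those already multiplied by negative powers of $\Ha$ — but the $\sfr^{(d)}$ are honest regular sections over $S^{\ord}$, so they lie in $R^{\ord}_{\triv}$ with, moreover, nonnegative ordinary valuation). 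Hence the lemma follows.
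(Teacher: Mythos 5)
Your overall skeleton (minimal polynomial of an element integral over an integrally closed domain lies in that domain, via Galois conjugates, plus the fact that the DVR $\widetilde{R}_{\triv}$ is integrally closed) is exactly the paper's argument. The gap is in how you establish that $f$ is integral over $\widetilde{R}_{\triv}$. You assert that the coefficients $\sfr^{(d)}_{L(\FF_p)}(f)$ of $P_f(t)$ have $\val_{\ord}\geq 0$ and hence lie in $\widetilde{R}_{\triv}$, "since they are honest regular sections over $S^{\ord}$," and in passing you claim $R^{\ord}_{\triv}\subset \widetilde{R}_{\triv}$. That containment is false: $R^{\ord}_{\triv}=R_{\triv}[\Ha^{-1}]$ and $\val_{\ord}(\Ha^{-1})=-1$, so $\Ha^{-1}\in R^{\ord}_{\triv}\setminus \widetilde{R}_{\triv}$. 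More to the point, regularity on the ordinary locus says nothing about the order of pole along the non-ordinary divisor; for a section over $S^{\ord}$ of a vector bundle on the smooth $S$ with irreducible non-ordinary divisor, having $\val_{\ord}\geq 0$ is equivalent to extending to all of $S$. Whether the symmetric transforms of a global section extend in this way is precisely the question raised around \eqref{SK-norm} in section \ref{sec-sym-transf}, which the paper explicitly says it believes but could not prove. So your justification of the integrality of $f$ over $\widetilde{R}_{\triv}$ rests on an unproven (and in the paper, deliberately avoided) assertion.

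The paper gets integrality of $f$ by a different route: $f\in R\subset \widetilde{R}$, and the preceding proposition shows $(\widetilde{R}_{\triv})^{\mathrm{cl}}=\widetilde{R}$, i.e.\ $\widetilde{R}$ is the integral closure of $\widetilde{R}_{\triv}$ in $F$ (its proof uses that the classical Hasse invariant has multiplicity one along the non-ordinary locus). Note that this is genuinely needed: the valuation ring of an extension of $\val_{\ord}$ to $F$ is not automatically integral over $\widetilde{R}_{\triv}$, so one cannot simply wave at $f\in\widetilde{R}$ either. Once integrality of $f$ is in place, your conjugate argument goes through verbatim; but as written, the proposal does not prove the lemma without importing that proposition or the unproved extension property of the symmetric transforms.
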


\begin{proof}
Let $\overline{F}$ be an algebraic closure of $F$. Since $f$ is integral over $\widetilde{R}_{\triv}$, all the $\Gal(\overline{F}/F_{\triv})$-conjugates of $f$ are also integral over $\widetilde{R}_{\triv}$. It follows that the coefficients of $M_f(t)$ are integral over $\widetilde{R}_{\triv}$. Since $\widetilde{R}_{\triv}$ is a discrete valuation ring, it is integrally closed, so the result follows.
\end{proof}

Recall that we have field extensions $F_{\triv}\subset F\subset F_0$. However, since the construction of the symmetric transforms $\sfr^{(d)}_{L(\FF_p)}(f)$ are inherently related to the existence of the $L(\FF_p)$-torsor $Y^{\ord}_{L(\FF_p)}$, we cannot see the roots of the polynomial $P_f(t)$ (other than $f$) in these fields. In order to decompose $P_f(t)$, one could consider the natural projection
\begin{equation}
  Y^{\ord}_{L(\FF_p)}\times L \to L(\FF_p)\backslash \left( Y^{\ord}_{L(\FF_p)}\times L \right) \simeq Y^{\ord}
\end{equation}
Then, by construction the polynomial $P_f(t)$ decomposes into linear factors in the ring $k[Y^{\ord}_{L(\FF_p)}\times L][t]$. However, it seems to us that $Y^{\ord}_{L(\FF_p)}$ may be non-connected in general, so the ring $k[Y^{\ord}_{L(\FF_p)}\times L]$ is not an integral domain. For this reason, it is difficult to determine the roots of $P_f(t)$ as elements of a certain field extension of $F$. Instead, we argue in the following indirect way. We can decompose
\begin{equation}
    P_f(t) = M_f(t) Q(t)
\end{equation}
where $P_f(t)\in R^{\ord}_{\triv}[t]$, $M_f[t]\in \widetilde{R}_{\triv}[t]$ and $Q(t)\in F_{\triv}[t]$. Furthermore, we can embed $F$ into $k(Y\times P)$ and view all coefficients of these polynomials as rational functions on $Y\times P$. Let $D\subset Y\times P$ be any irreducible divisor, different from the non-ordinary divisor $(Y\setminus Y^{\ord})\times P$. Let $\val_D$ denote the discrete valuation attached to $D$ on $k(Y\times P)$ and let $\widetilde{R}_D\subset k(Y\times P)$ denote the valuation ring of $\val_D$. Since the coefficients of $P_f(t)$ are all regular maps on $Y^{\ord}\times P$, we have $P_f(t)\in \widetilde{R}_D[t]$. Recall the following elementary lemma:

\begin{lemma}
Let $A$ be a unique factorization domain with fraction field $K$. If $P,Q,R\in K[t]$ are monic polynomials such that $P\in A[t]$ and $P=QR$, then $Q,R$ lie in $A[t]$.
\end{lemma}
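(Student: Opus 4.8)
The plan is to reduce the statement to Gauss's lemma on contents over a unique factorization domain. Recall that for a nonzero polynomial $g\in A[t]$ one has the \emph{content} $c(g)\in A$, a greatest common divisor of the coefficients of $g$, well-defined up to multiplication by a unit of $A$; one says $g$ is \emph{primitive} when $c(g)\in A^{\times}$. Gauss's lemma asserts that the product of two primitive polynomials of $A[t]$ is again primitive; equivalently, $c(gh)$ and $c(g)c(h)$ agree up to a unit. This holds for every UFD $A$, which is exactly the hypothesis at hand.

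First I would clear denominators and normalize. Since $Q$ is monic, there is $a\in A$, $a\neq 0$, with $aQ\in A[t]$; the leading coefficient of $aQ$ is then $a$, so $c(aQ)\mid a$, and replacing $a$ by $a/c(aQ)$ we may assume $aQ\in A[t]$ is primitive. Choose $b\in A$ similarly so that $bR\in A[t]$ is primitive. Multiplying the relation $P=QR$ by $ab$ yields an identity in $A[t]$:
\begin{equation}
ab\,P=(aQ)(bR).
\end{equation}
Now take contents. As $P$ is monic it is primitive, so $c(ab\,P)=ab$ up to a unit of $A$; by Gauss's lemma $(aQ)(bR)$ is primitive, so $c\bigl((aQ)(bR)\bigr)\in A^{\times}$. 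Comparing, we get $ab\in A^{\times}$, and hence each of $a$ and $b$ is a unit of $A$ (if $ab$ divides $1$, then so does $a$, and so does $b$). Therefore $Q=a^{-1}(aQ)\in A[t]$ and $R=b^{-1}(bR)\in A[t]$, as desired.

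There is no real obstacle here: this is the classical ``monic version'' of Gauss's lemma, and the only delicate points are purely bookkeeping — the content is defined only up to units, so the equalities above are to be read modulo $A^{\times}$, and the passage to primitive representatives $aQ$, $bR$ with $a,b\in A$ uses the hypothesis that $Q,R$ are monic (this is what guarantees $c(aQ)\mid a$). In the intended application, $A$ is the discrete valuation ring $\widetilde{R}_D$ attached to an irreducible divisor $D\subset Y\times P$ distinct from $(Y\setminus Y^{\ord})\times P$ — in particular a PID, hence a UFD — with fraction field $k(Y\times P)$, and the lemma is applied to the monic polynomials $P_f(t)\in\widetilde{R}_D[t]$, $M_f(t)$ and $Q(t)$ to conclude that $M_f(t)$ and $Q(t)$ have all their coefficients in $\widetilde{R}_D$.
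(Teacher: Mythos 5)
Your proof is correct. The paper itself states this lemma without proof, as a standard elementary fact, so there is nothing to compare against: your content/Gauss's-lemma argument is the classical one, and the normalization step (using monicity of $Q$, $R$ to arrange that $aQ$, $bR$ are primitive with $a,b\in A$, then comparing contents in $ab\,P=(aQ)(bR)$) is handled correctly. For the record, an equally short alternative avoids contents altogether: the roots of $Q$ and $R$ in an algebraic closure of $K$ are roots of $P$, hence integral over $A$; the coefficients of $Q$ and $R$ are symmetric functions of these roots, hence integral over $A$ and lie in $K$, so they lie in $A$ because a UFD is integrally closed — this is the same integral-closedness principle the paper invokes elsewhere for the valuation rings $\widetilde{R}_{\triv}$ and $\widetilde{R}_D$.
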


It follows from this that $M_f(t)\in \widetilde{R}_D[t]$. Since $D$ was arbitrary, we deduce that the coefficients of $M_f(t)$ must be regular functions on all of $Y\times P$. Therefore, we obtain that $M_f(t)\in R[t]$ for any $f\in H^0(S_K,\Vcal_I(\lambda))$. Since $R$ is generated by homogeneous elements, we have shown the following theorem.

\begin{theorem}\label{thm-Rtriv-int}
The extension $R_{\triv}\subset R$ is integral.
\end{theorem}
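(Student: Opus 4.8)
The plan is to promote the identity $P_f(t) = M_f(t) Q(t)$ from $F_{\triv}[t]$ to a suitable ring of regular functions, divisor by divisor, and then invoke that $R$ is generated by its homogeneous pieces (all lying in spaces $H^0(S,\Vcal_I(\lambda))$). So fix $\lambda\in X^*(T)$ and a homogeneous $f\in H^0(S,\Vcal_I(\lambda))\subset R$; it suffices to show that the minimal polynomial $M_f(t)$ of $f$ over $F_{\triv}$ has coefficients in $R$.

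First I would record the two inputs already in hand. Since the extension $\widetilde R_{\triv}\subset \widetilde R$ is integral (indeed $\widetilde R_{\triv}\subset \widetilde R$ via the relation coming from $P_f$, and $\widetilde R_{\triv}$ is a DVR hence integrally closed), the previous lemma gives $M_f(t)\in \widetilde R_{\triv}[t]$; in particular the coefficients of $M_f$ are regular \emph{along the non-ordinary divisor} $(Y\setminus Y^{\ord})\times P$ of $Y\times P$. On the other hand, $P_f(t) = t^N + \sfr^{(1)}_{L(\FF_p)}(f)t^{N-1}+\dots+\sfr^{(N)}_{L(\FF_p)}(f)$ has coefficients in $H^0(S^{\ord},\Vcal_I(\lambda)_{\triv})\subset R^{\ord}$, hence — viewing everything as rational functions on $Y\times P$ — its coefficients are regular on the open set $Y^{\ord}\times P$, i.e. at every irreducible divisor $D\neq (Y\setminus Y^{\ord})\times P$.

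Now the divisor-wise argument. Embed $F\hookrightarrow k(Y\times P)$ (possible since $R$ is a domain sitting inside $k[Y\times P]$, by Lemma \ref{lemma-integral-dom-R}); $k(Y\times P)$ is the fraction field of the normal — in fact smooth — variety $Y\times P$. Let $D$ be any prime divisor of $Y\times P$ with $D\neq (Y\setminus Y^{\ord})\times P$, let $\val_D$ be the associated valuation, and let $\widetilde R_D\subset k(Y\times P)$ be its valuation ring. By the preceding paragraph $P_f(t)\in\widetilde R_D[t]$, and $\widetilde R_D$ is a DVR, hence a UFD; the elementary lemma on factorizations of monic polynomials over a UFD then yields $M_f(t)\in\widetilde R_D[t]$, since $M_f$ divides $P_f$ in $F_{\triv}[t]\subset k(Y\times P)[t]$ and both are monic. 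Combining this with $M_f(t)\in\widetilde R_{\triv}[t]\subset\widetilde R_D'[t]$ for $D'=(Y\setminus Y^{\ord})\times P$, we conclude that each coefficient of $M_f$ lies in $\bigcap_{D}\widetilde R_D$, the intersection being over \emph{all} prime divisors of $Y\times P$. Since $Y\times P$ is smooth (hence normal and locally factorial), this intersection is exactly $k[Y\times P]$; a global rational function regular in codimension one on a normal variety is regular. Therefore the coefficients of $M_f$ are regular functions on $Y\times P$, and — being $\Gal$-conjugate symmetric in the $L$-equivariant data and killed by $R_{\mathrm u}(B)$ on the right (inherited from $f$ and the group-averaged construction of the $\sfr^{(d)}$) — they in fact lie in $R$, as described in the Remark following Lemma \ref{lemma-integral-dom-R}. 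Thus $M_f(t)\in R[t]$, so $f$ is integral over $R_{\triv}$; since $R$ is generated as a $k$-algebra by such homogeneous $f$, the extension $R_{\triv}\subset R$ is integral.

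The main obstacle I anticipate is not the valuative bookkeeping but verifying cleanly that the coefficients of $M_f$, which a priori are only known to be elements of $F_{\triv}$ regular on all of $Y\times P$, genuinely land in the subring $R_{\triv}$ rather than merely in $R$ (or in $k[Y\times P]$): one must check the $L$- and $R_{\mathrm u}(B)$-equivariance and the $L(\FF_p)$-invariance are preserved under taking the minimal polynomial. This is handled by noting $M_f$ divides $P_f$ and $P_f$ already has coefficients in $R_{\triv}^{\ord}$, together with the fact that $\widetilde R_{\triv}\cap R = R_{\triv}$; once $M_f(t)\in R[t]\cap \widetilde R_{\triv}[t]$ this forces $M_f(t)\in R_{\triv}[t]$, which is even a little more than the theorem claims.
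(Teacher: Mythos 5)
Your proof is correct and follows essentially the same route as the paper: the factorization $P_f(t)=M_f(t)Q(t)$, the input $M_f(t)\in\widetilde{R}_{\triv}[t]$, and the divisor-by-divisor argument with the valuation rings $\widetilde{R}_D$ on $Y\times P$ together with the monic-factorization lemma over a UFD, concluding that the coefficients of $M_f$ are regular on all of $Y\times P$ and hence lie in $R$. One small correction: the integrality of $\widetilde{R}$ over $\widetilde{R}_{\triv}$ does not come ``via the relation from $P_f$'' (its coefficients $\sfr^{(d)}_{L(\FF_p)}(f)$ lie only in $R^{\ord}_{\triv}$, which may have poles along the non-ordinary divisor); it is the earlier proposition $(\widetilde{R}_{\triv})^{\mathrm{cl}}=\widetilde{R}$ that supplies this, while your closing refinement that the coefficients land in $R_{\triv}$ rather than merely in $R$ is a harmless extra precision compared with what the paper writes.
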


\subsection{Existence of automorphic forms of trivial-type}

We finish this section by proving Theorem \ref{thm-triv-lambda}. Assume that $f\in H^0(S_K,\Vcal_I(\lambda))$ is a nonzero automorphic form of weight $\lambda$ and level $K$. We can find a connected component $S\subset S_K$ such that $f|_S$ is nonzero. Write $R$, $R_{\triv}$ for the rings attached to $S$ as defined in section \ref{sec-rings-R}. Since the extension $R_{\triv}\subset R$ is integral, there is a monic polynomial $P(t)\in R[t]$ such that $P(f|_S)=0$. Furthermore, by the proof of Theorem \ref{thm-triv-lambda}, we can assume that the degree of $P$ is less than $|L(\FF_p)|$. Write
\begin{equation}
    P(t)=t^n + a_1 t^{n-1}+ a_2 t^{n-2}+ \dots + a_0
\end{equation}
with $a_0,\dots,a_{n-1}\in R_{\triv}$. Since $f^n$ is a homogeneous element of weight $n\lambda$, some of the coefficients $a_i$ must have a nonzero homogeneous component of weight $d\lambda$ for some $1\leq d\leq n$. In particular, this shows that $H^0(S,\Vcal_I(d\lambda))_{\triv}$ is nonzero. This terminates the proof of Theorem \ref{thm-triv-lambda}.

\subsection{Trivial-type stable base locus}
We will need a version of Corollary \ref{cor-pb-nonzero} for the trivial-part of automorphic vector bundles. Define the trivial-part base locus $B_I(\lambda)_{\triv}$ of the vector bundle $\Vcal_I(\lambda)$ as the set of points $x\in S_K$ such that for any $f\in H^0(S_K,\Vcal_I(\lambda))_{\triv}$, the section $f$ vanishes at $x$. The set $B_I(\lambda)_{\triv}$ is a closed subscheme of $S_K$ which contains the usual base locus $B_I(\lambda)$. Similarly, define the trivial-part stable base locus $\BB_I(\lambda)_{\triv}$ of $\Vcal_I(\lambda)$ as the intersection
\begin{equation}
    \BB_I(\lambda)_{\triv}\colonequals \bigcap_{n\geq 1} B_I(n\lambda)_{\triv}. 
\end{equation}
We show the following:
\begin{proposition}
For all $\lambda\in X^*(T)$, we have $\BB_I(\lambda)_{\triv}=\BB_I(\lambda)$.
\end{proposition}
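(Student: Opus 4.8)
The inclusion $\BB_I(\lambda)\subseteq \BB_I(\lambda)_{\triv}$ is immediate, since $H^0(S_K,\Vcal_I(n\lambda))_{\triv}\subseteq H^0(S_K,\Vcal_I(n\lambda))$ forces $B_I(n\lambda)\subseteq B_I(n\lambda)_{\triv}$ for every $n\ge 1$, hence $\BB_I(\lambda)\subseteq\BB_I(\lambda)_{\triv}$. The content is the reverse inclusion, and the plan is to deduce it from the integrality of the extension $R_{\triv}\subseteq R$ (Theorem \ref{thm-Rtriv-int}), applied one connected component at a time; note in particular that when $\lambda\notin\Ccal(\overline{\FF}_p)$ there is nothing to prove, as then both sides equal $S_K$.

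So I would argue by contraposition. Let $x$ be a $k$-point of $S_K$ with $x\notin \BB_I(\lambda)$, so there are $n_0\ge 1$ and $f\in H^0(S_K,\Vcal_I(n_0\lambda))$ with $f(x)\ne 0$. Let $S$ be the connected component of $S_K$ containing $x$; then $f|_S$ is a nonzero homogeneous element of degree $n_0\lambda$ in the $X^*(T)$-graded domain $R=\bigoplus_\mu H^0(S,\Vcal_I(\mu))$. By Theorem \ref{thm-Rtriv-int} it satisfies a monic relation over $R_{\triv}$; extracting the homogeneous component of degree $n\cdot n_0\lambda$ of that relation (which is legitimate because $X^*(T)$ is torsion-free and $R_{\triv}$ is a graded subring of $R$) yields a relation
\[
(f|_S)^n+\sum_{i=1}^{n}b_i\,(f|_S)^{n-i}=0,\qquad b_i\in H^0(S,\Vcal_I(in_0\lambda))_{\triv}.
\]

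Next I would evaluate this identity along the $L$-torsor $\beta\colon Y\to S$. Pick $y\in Y$ lying over $x$; by section \ref{sec-torsors}, $f|_S$ corresponds to an $L$-equivariant map $\widehat{f}\colon Y\to V_I(n_0\lambda)$, and $\widehat{f}(y)\ne 0$ since $\widehat f$ is $L$-equivariant, $L$ acts transitively on the fibre $Y_x$, and $f|_S(x)\ne 0$; likewise each $b_i$ corresponds to an $L$-equivariant map $\widehat{b_i}$. As the products of sections are induced by the multiplication maps \eqref{VI-add}, the displayed relation becomes an identity in the ring $\bigoplus_\mu V_I(\mu)$, which embeds into $k[P]$ — the spaces $V_I(\mu)$ sit in pairwise distinct weight spaces for the right $T$-action — and is hence an integral domain. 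Therefore $\widehat{f}(y)\ne 0$ forces $\widehat{b_{i_0}}(y)\ne 0$ for some $1\le i_0\le n$, so the section $b_{i_0}$ does not vanish at $x$. Extending $b_{i_0}$ by zero on the remaining components of $S_K$ produces $\widetilde{b}_{i_0}\in H^0(S_K,\Vcal_I(i_0n_0\lambda))_{\triv}$ (the zero section lies in every trivial subbundle) with $\widetilde{b}_{i_0}(x)\ne 0$. Since $i_0n_0\ge 1$, this gives $x\notin B_I(i_0n_0\lambda)_{\triv}$, and a fortiori $x\notin \BB_I(\lambda)_{\triv}$, which is what we wanted.

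The only step requiring real care is the bookkeeping: isolating the correct homogeneous component of the integral dependence relation, and checking that $\bigoplus_\mu V_I(\mu)$ — equivalently, the fibre $\bigoplus_\mu\Vcal_I(\mu)_x$ — is a domain, so that a product of fibre-elements that are individually nonzero stays nonzero. Everything else is formal. It is worth emphasising that this approach needs neither a trivial-type analogue of the Hecke-stability statement (Theorem \ref{thm-B-stable}) nor of Proposition \ref{prop-BI}: keeping the witnessing section $b_{i_0}$ in the weight $i_0n_0\lambda$, a genuine positive multiple of $\lambda$, is exactly what the integrality of $R_{\triv}\subseteq R$ buys us — whereas the cruder device of taking a norm $\Norm_{L(\FF_p)}(f^{\ord})$ would only produce a trivial-type form of weight $N\lambda+m(p-1)\lambda_\omega$, which is of no use here.
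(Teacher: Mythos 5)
Your argument is correct and is essentially the paper's own proof read contrapositively: both hinge on the integrality of $R_{\triv}\subset R$ (Theorem \ref{thm-Rtriv-int}), projection of the monic relation onto the homogeneous parts of weight $\NN\lambda$, and evaluation at the point in question, working one connected component at a time. The only real difference is that you spell out (via the $L$-torsor and the embedding $\bigoplus_\mu V_I(\mu)\subset k[P]$) the fibrewise integral-domain property that the paper leaves implicit in the step ``all $a_i$ vanish at $x$, hence so does $f$.''
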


\begin{proof}
It suffices to shows that $\BB_I(\lambda)_{\triv}\subset \BB_I(\lambda)$. Let $x\in \BB_I(\lambda)_{\triv}$ and let $f\in H^0(S_K,\Vcal_I(d\lambda))$ be a section. We need to show that $f$ vanishes at $x$. We can find $a_0,\dots,a_{n-1}\in R_{\triv}$ such that 
\begin{equation}
    f^n + a_1 f^{n-1}+ a_2 f^{n-2}+ \dots + a_0 = 0.
\end{equation}
Projecting this relation onto the homogeneous part of weights $\NN \lambda$, we find a similar relation with the additional condition that $a_i\in H^0(S_K,\Vcal_I(d_i\lambda))_{\triv}$ for some $d_i\geq 1$. By assumption, all $a_i$ vanish at $x$. Hence the same is true for $f$. The result follows. 
\end{proof}

Finally, we are able to prove the technical lemma below, which will be necessary in the proof of the main theorem. We retain all notations and assumptions of Corollary \ref{cor-pb-nonzero}.

\begin{corollary}\label{cor-pb-nonzero-triv}
Assume that $H^0(X_G,\Vcal_I(\lambda))\neq 0$. Then there exists $m\geq 1$ and a nonzero trivial-type Siegel automorphic form $f\in H^0(X_G,\Vcal_I(m\lambda))_{\triv}$ such that the pullback
\[\widetilde{u}^*(f) \in H^0(X_H, \widetilde{u}^*(\Vcal_I(m\lambda)))\]
is nonzero.
\end{corollary}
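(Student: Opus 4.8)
The plan is to re-run the argument behind Corollary~\ref{cor-pb-nonzero}, but with the trivial-part stable base locus $\BB_I(\lambda)_{\triv}$ replacing $\BB_I(\lambda)$; the preceding proposition, which identifies $\BB_I(\lambda)_{\triv}=\BB_I(\lambda)$, is exactly what makes this substitution legitimate. First I would note that the hypothesis $H^0(X_G,\Vcal_I(\lambda))\neq 0$ rules out condition (ii) of Proposition~\ref{prop-BI} (take $n=1$), hence also rules out (i): the locus $\BB_I(\lambda)$ meets the ordinary locus of $X_G$ in the empty set. Combining this with $\BB_I(\lambda)_{\triv}=\BB_I(\lambda)$, I get that $\BB_I(\lambda)_{\triv}$ contains no ordinary point of $X_G$.

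Next I would produce an ordinary point inside the image $\widetilde{u}(X_H)$. Since $X_H$ is a Hilbert--Blumenthal Shimura variety its ordinary locus is nonempty, and a point of $X_H$ is ordinary precisely when its underlying abelian variety is, which is the same as the condition for its image under $\widetilde{u}$ to lie in $X_G^{\ord}$; so $\widetilde{u}(X_H^{\ord})\subset X_G^{\ord}$ and I may choose $x=\widetilde{u}(y)\in X_G^{\ord}$ with $y\in X_H$. Because $x\notin \BB_I(\lambda)_{\triv}=\bigcap_{m\geq 1}B_I(m\lambda)_{\triv}$, there is some $m\geq 1$ with $x\notin B_I(m\lambda)_{\triv}$, i.e.\ a trivial-type form $f\in H^0(X_G,\Vcal_I(m\lambda))_{\triv}$ with $f(x)\neq 0$. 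Finally, pulling back and evaluating at $y$ under the canonical identification of fibres gives $(\widetilde{u}^*f)(y)=f(x)\neq 0$, so $\widetilde{u}^*(f)\neq 0$, which is the desired conclusion.

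I do not expect a serious obstacle here: the entire content is the already-established equality $\BB_I(\lambda)_{\triv}=\BB_I(\lambda)$, and the rest is formal bookkeeping about base loci and multiples, parallel to Corollary~\ref{cor-pb-nonzero}. The one point I would take care to state precisely is that $\widetilde{u}$ sends ordinary points to ordinary points --- so that a trivial-type form, whose defining condition lives over the ordinary locus, can legitimately be tested after pullback on $X_H^{\ord}$ --- but this is immediate from the moduli description, since $\widetilde{u}$ only adds $\Ocal_{\mathbf{F}}$-structure and does not alter the underlying abelian scheme.
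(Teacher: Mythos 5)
Your argument is correct and is essentially the paper's own proof: both reduce to the argument of Corollary \ref{cor-pb-nonzero} via Proposition \ref{prop-BI}, use the equality $\BB_I(\lambda)_{\triv}=\BB_I(\lambda)$ to conclude that the trivial-part stable base locus misses the ordinary locus, and then pick an ordinary point in $\widetilde{u}(X_H)$ at which some trivial-type form of weight $m\lambda$ does not vanish. Your extra remark that $\widetilde{u}$ preserves ordinariness (via the moduli description) is a correct explication of a point the paper leaves implicit.
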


\begin{proof}
The proof is exactly the same as Corollary \ref{cor-pb-nonzero}. Since $H^0(X_G,\Vcal_I(\lambda))\neq 0$ and $\BB_I(\lambda)_{\triv}=\BB_I(\lambda)$, we deduce that the trivial-part stable base locus $\BB_I(\lambda)_{\triv}$ does not intersect $S_K^{\ord}$. Since the ordinary locus of Hilbert--Blumenthal Shimura varieties is nonempty, we can choose an ordinary point $x\in \widetilde{u}(X_H)$. As $x\notin \BB_I(\lambda)_{\triv}$, we can find $m\geq 1$ and a section $f\in H^0(X_G,\Vcal_I(m\lambda))_{\triv}$ which does not vanish at $x$. The result follows.
\end{proof}

\section{Main result}

\subsection{The Cone Conjecture}

Let $X$ be a $k$-scheme endowed with a morphism $\zeta\colon X\to \GZip^\mu$ (such a map is sometimes referred to as a period map). We make the following assumptions on $X$:
\begin{assumption} \ \label{assumeX}
\begin{assertionlist}
    \item $X$ is a proper $k$-scheme,
    \item $\zeta$ is smooth and surjective on each connected component.
\end{assertionlist}
\end{assumption}
For example, let $S_K^{\Sigma}$ be a smooth toroidal compactification of a Hodge-type Shimura variety at a place of good reduction and $\zeta^{\Sigma}_K\colon S_K^{\Sigma}\to \GZip^\mu$ be the extension of Zhang's morphism (see section \ref{sec-tor}). Then, the pair $(S_K^{\Sigma},\zeta^\Sigma)$ satisfies the above assumption, as explained in \cite[\S 2.2]{Goldring-Koskivirta-divisibility}. One can also weaken the assumption that $X$ is proper and ask only that the strata of length $1$ in $\Flag(X)$ are proper (or even pseudo-complete), see \cite[\S 2.2]{Goldring-Koskivirta-divisibility}.

For each $\lambda\in X^*(T)$, write again $\Vcal_I(\lambda)$ for the vector bundle $\zeta^*(\Vcal_I(\lambda))$ on $X$. We define the cone of $X$ (denoted by $C_X$) as the set of $\lambda\in X^*(T)$ such that $\Vcal_I(\lambda)$ admits a nonzero global section on $X$, in other words:
\begin{equation}
    C_X\colonequals \left\{ \lambda\in X^*(T) \ \relmiddle| \  H^0(X,\Vcal_I(\lambda))\neq 0 \right\}.
\end{equation}
Write $\Ccal_X$ for the saturation of $C_X$.
\begin{conjecture}[Cone Conjecture]\label{conjX}
Under Assumption \ref{assumeX}, one has $\Ccal_X = \Ccal_{\zip}$.
\end{conjecture}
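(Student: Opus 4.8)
The inclusion $\Ccal_{\zip}\subseteq \Ccal_X$ is unconditional: since $\zeta$ is surjective on each connected component, pulling back a nonzero section of $\Vcal_I(\lambda)$ over $\GZip^\mu$ yields a nonzero section over $X$. Hence the content of Conjecture \ref{conjX} is the reverse inclusion $\Ccal_X\subseteq \Ccal_{\zip}$, which by Theorem \ref{thm-H0-zip} amounts to the following: if $H^0(X,\Vcal_I(\lambda))\neq 0$, then $V_I(d\lambda)^{L(\FF_p)}\cap V_I(d\lambda)_{\leq 0}\neq 0$ for some $d\geq 1$. The plan is to show that $H^0$ over $X$ exhibits the same two-part structure that governs $H^0(\GZip^\mu,\Vcal_I(\lambda))$, namely the interaction of an \emph{ordinary part} $V_I(\lambda)^{L(\FF_p)}$ with a \emph{negative part} $V_I(\lambda)_{\leq 0}$, and that each part can be witnessed on $X$. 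As carried out in this paper, this strategy succeeds for $X=\overline{\Acal}_{n,K}^{\Sigma}$; the general conjecture reduces to two points discussed below.

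\emph{Ordinary part.} Assume $H^0(X,\Vcal_I(\lambda))\neq 0$. Provided $X$ satisfies (a version of) Assumption \ref{assume-SK-irred} — valid for $X=\overline{\Acal}_{n,K}^{\Sigma}$ by Proposition \ref{prop-non-ord}, but requiring separate input for a general period map — Theorem \ref{thm-triv-lambda} produces $d\geq 1$ with $H^0(X,\Vcal_I(d\lambda))_{\triv}\neq 0$. Replacing $\lambda$ by $d\lambda$, fix a nonzero trivial-type section $f$; over the ordinary locus it is encoded by an $L(\FF_p)$-invariant regular map valued in $V_I(\lambda)^{L(\FF_p)}$, which is the realization of the ordinary part. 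Internally this step rests on Theorem \ref{thm-Rtriv-int} (integrality of $R_{\triv}\subset R$), hence on the $L$-torsor $Y$, the symmetric transforms $\sfr^{(d)}_{L(\FF_p)}$, and the ordinary valuation.

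\emph{Negative part.} In the Siegel case one detects negativity by restriction to a Hilbert--Blumenthal sub-Shimura variety in which $p$ splits completely. Choose a totally real field $\mathbf{F}$ of degree $n$ with $p$ split, giving $\widetilde{u}\colon X_H\to X_G$; by Corollary \ref{cor-pb-nonzero-triv}, after a further replacement of $\lambda$ by a multiple $m\lambda$ one may take $f$ trivial-type with $\widetilde{u}^*(f)\neq 0$ on $X_H$. Passing to flag spaces, so that $\Vcal_I(m\lambda)$ and $\widetilde{u}^*\Vcal_I(m\lambda)$ are described by line bundles $\Vcal_{\flag}(\cdot)$ indexed by $T\simeq T_H$-weights, nonvanishing of $\widetilde{u}^*(f)$ produces a weight $\chi$ of $V_I(m\lambda)^{L(\FF_p)}$ with $H^0(X_H,\Lcal(\chi))\neq 0$; the split-case Hilbert--Blumenthal vanishing theorem \eqref{neg-intro} then forces $\chi$ to be non-positive, i.e. $\chi\in V_I(m\lambda)_{\leq 0}$ as in \eqref{nonposV}. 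Thus $V_I(m\lambda)^{L(\FF_p)}\cap V_I(m\lambda)_{\leq 0}\neq 0$, so $m\lambda\in C_{\zip}$ and $\lambda\in\Ccal_{\zip}$. For a general Hodge-type datum $(G,\mu)$, the split Hilbert--Blumenthal variety must be replaced by a family of sub-Shimura varieties attached to sub-cocharacter-data $(H_i,\mu_{H_i})$ with each $H_i$ a product of copies of $\GL_2$ over $\FF_p$ (carrying the minuscule $\mu$), chosen so that the restriction maps $\widetilde{u}_i^*$ jointly witness every defining inequality of $V_I(\lambda)_{\leq 0}$.

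The main obstacle is precisely this last step in full generality: producing enough such "split" sub-data whose (already known) cones collectively cut out the non-positive part $V_I(\lambda)_{\leq 0}$ inside $V_I(\lambda)^{L(\FF_p)}$, and showing that the union of the corresponding subvarieties is Zariski dense in $X$ so that the stable-base-locus machinery of Proposition \ref{prop-BI} and Corollary \ref{cor-pb-nonzero-triv} can be applied — this is essentially the higher-rank analogue of Question \ref{quest-Hilbert}. A secondary obstacle, also open outside the Siegel and other low-rank cases, is establishing for a general Hodge-type Shimura variety the irreducibility of the non-ordinary locus on each connected component together with multiplicity one of the Hasse invariant (Assumption \ref{assume-SK-irred}), which is what makes the ordinary-part argument run. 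Once both of these are in place, the equality $\Ccal_X=\Ccal_{\zip}$ follows from the argument above with no further ideas required.
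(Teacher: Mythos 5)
Note first that the statement you address is stated in the paper only as a conjecture: the paper proves it solely for Siegel-type Shimura varieties (Theorem \ref{main-thm}), and your proposal mirrors exactly that situation. You argue the inclusion $\Ccal_{\zip}\subset\Ccal_X$ by pullback along the surjective period map, the ordinary part via trivial-type forms (Theorem \ref{thm-triv-lambda}, resting on Theorem \ref{thm-Rtriv-int} and Assumption \ref{assume-SK-irred}), and the negative part via a split Hilbert--Blumenthal embedding together with Corollary \ref{cor-pb-nonzero-triv}, while correctly flagging that the general case (arbitrary $X$ with a period map, general cocharacter data) remains open; this matches the paper, which offers no proof of the conjecture beyond the Siegel case, so your identification of the two outstanding obstacles is consistent with what is actually known.

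The one step where your write-up, taken literally, does not follow is the passage from nonvanishing of $\widetilde{u}^*(f)$ to $V_I(m\lambda)^{L(\FF_p)}\cap V_I(m\lambda)_{\leq 0}\neq 0$. The subspace $V_I(m\lambda)^{L(\FF_p)}$ is not stable under $T$, hence admits no weight decomposition, so exhibiting a non-positive weight $\chi$ in which $\widetilde{u}^*(f)$ has a nonzero component does not by itself produce an $L(\FF_p)$-invariant vector lying in $V_I(m\lambda)_{\leq 0}$. The paper's Proposition \ref{prop-triv-zip} closes this differently: by Proposition \ref{Czip-reformul}, $m\lambda\in C_{\zip}$ is equivalent to non-injectivity of $\pr_{\van}$ on $V_I(m\lambda)^{L(\FF_p)}$; assuming injectivity, the induced map of associated bundles over the ordinary locus of $X_H$ is injective on sections, whereas the Hilbert--Blumenthal vanishing \eqref{neg-intro} forces $\pr_{\van}(\widetilde{u}^*(f))=0$ even though $\widetilde{u}^*(f)$ is nonzero there, a contradiction. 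With that substitution your outline coincides with the paper's proof of the Siegel case; for the conjecture in the generality in which it is stated, the gaps you name (a higher-rank analogue of Question \ref{quest-Hilbert} and a general form of Assumption \ref{assume-SK-irred}) are genuine and are not resolved in the paper.
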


In particular, this conjecture applies to the pair $(S_K^{\Sigma},\zeta^\Sigma)$ as explained above. Furthermore, Koecher's principle (Theorem \ref{thm-koecher}) implies that $H^0(S_K,\Vcal_I(\lambda))=H^0(S^{\Sigma}_K,\Vcal^{\Sigma}_I(\lambda))$, so the above conjecture also applies to the Shimura variety $S_K$ itself, even though $S_K$ may not be proper over $k$. When $X=S_K$, the cone $\Ccal_X$ is simply the cone $\Ccal(\overline{\FF}_p)$ defined in section \ref{sec-autom-vb}.

\subsection{Siegel-type Shimura varieties}

We now specialize to the case of Siegel-type Shimura varieties, and set $G=\GSp_{2n,\FF_p}$. We retain all notations of section \ref{Siegel-type-sec}. The main result of this paper is the following:

\begin{theorem}\label{main-thm}
Conjecture \ref{conjX} holds for Siegel-type Shimura varieties $\overline{\Acal}_{n,K}$. In other words, one has 
\begin{equation}
\Ccal(\overline{\FF}_p)=\Ccal_{\zip}.    
\end{equation}
\end{theorem}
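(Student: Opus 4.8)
The inclusion $\Ccal_{\zip}\subset \Ccal(\overline{\FF}_p)$ is automatic by pullback along $\zeta_G$, so the content is the reverse inclusion: given $\lambda$ with $H^0(\overline{\Acal}_{n,K},\Vcal_I(\lambda))\neq 0$, we must produce $d\geq 1$ with $H^0(\GZip^{\mu_G},\Vcal_I(d\lambda))\neq 0$. By Theorem \ref{thm-H0-zip} this last condition is the representation-theoretic statement
\[
V_I(d\lambda)^{\GL_n(\FF_p)}\cap V_I(d\lambda)_{\leq 0}\neq 0.
\]
So the strategy is: first replace the given form by one of \emph{trivial-type} (to gain access to the $\GL_n(\FF_p)$-invariant side), and then restrict to a suitable Hilbert--Blumenthal subvariety with $p$ split (to gain access to the "non-positive" side via the known Hilbert case \eqref{neg-intro}), and finally combine the two pieces of information back on $\GZip^{\mu_G}$.

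\textbf{Step 1: pass to trivial-type.} Apply Theorem \ref{thm-triv-lambda} (equivalently Theorem \ref{thm-Rtriv-int}): since $H^0(\overline{\Acal}_{n,K},\Vcal_I(\lambda))\neq 0$, there is $1\leq d_1\leq |L(\FF_p)|$ with $H^0(\overline{\Acal}_{n,K},\Vcal_I(d_1\lambda))_{\triv}\neq 0$. Fix such a nonzero trivial-type form $f_0$. Its restriction to $\overline{\Acal}_{n,K}^{\ord}$ lies in $H^0(\overline{\Acal}_{n,K}^{\ord},\Vcal_I(d_1\lambda)_{\triv})$, i.e. on the $L(\FF_p)$-torsor $Y^{\ord}_{L(\FF_p)}$ it is given by an $L(\FF_p)$-equivariant map into $V_I(d_1\lambda)^{L(\FF_p)}=V_I(d_1\lambda)^{\GL_n(\FF_p)}$. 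Since $\overline{\Acal}_{n,K}^{\ord}$ is connected on each component (it is open dense in a component) and $f_0\neq 0$, the image of this equivariant map spans a nonzero subspace $W\subset V_I(d_1\lambda)^{\GL_n(\FF_p)}$; so already $V_I(d_1\lambda)^{\GL_n(\FF_p)}\neq 0$.

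\textbf{Step 2: detect the non-positive part via a split Hilbert variety.} Now use Corollary \ref{cor-pb-nonzero-triv}: there are $m\geq 1$ and a nonzero trivial-type Siegel form $f\in H^0(X_G,\Vcal_I(m\lambda))_{\triv}$ (with $X_G=\overline{\Acal}_{n,K}$, after enlarging level if needed) whose pullback $\widetilde{u}^*(f)\in H^0(X_H,\widetilde{u}^*\Vcal_I(m\lambda))$ is nonzero, where $X_H$ is a Hilbert--Blumenthal variety for a totally real $\mathbf{F}$ of degree $n$ in which $p$ splits completely. Using the explicit embedding \eqref{emb-split}, the restriction of the $T$-weight filtration of $V_I(m\lambda)$ to the torus $T_H\cong T$ identifies $\widetilde{u}^*\Vcal_I(m\lambda)$ with a direct sum of line bundles $\Lcal(\chi)$ on $X_H$, indexed by the $T$-weights $\chi$ of $V_I(m\lambda)$ with multiplicity. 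Because $f$ is of trivial-type, $\widetilde{u}^*(f)$ takes values in the $L(\FF_p)$-invariant subrepresentation, but more importantly nonvanishing of $\widetilde{u}^*(f)$ forces nonvanishing of at least one of the corresponding Hilbert-modular components. Invoking the Hilbert--Blumenthal Cone Conjecture \eqref{neg-intro} (proved in \cite{Goldring-Koskivirta-global-sections-compositio, Diamond-Kassaei}; here $p$ is split so the cone is exactly $\{k_\tau\le 0\ \forall\tau\}$), the weight $\chi$ of that nonvanishing component must satisfy the Hilbert sign condition; translating back under $u\colon T_H\to T$, this says $\chi=(a_1,\dots,a_n)$ has $a_n\leq 0$ (more precisely all $a_i\le 0$, which is stronger than needed). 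Hence $V_I(m\lambda)_{\leq 0}$ contains a weight vector which is also in the $\GL_n(\FF_p)$-invariant part after a further symmetric-transform/norm argument — this is the delicate point, see below.

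\textbf{Step 3: combine.} To conclude one wants a single $d$ with $V_I(d\lambda)^{\GL_n(\FF_p)}\cap V_I(d\lambda)_{\leq 0}\neq 0$. The two facts obtained — that the trivial-type form $f$ has nonzero image in the invariant subspace, and that its Hilbert restriction is supported (in the weight decomposition) on a weight $\chi$ with $a_n\le 0$ — should be made to interact on the $L(\FF_p)$-torsor over the ordinary locus: evaluate $f$ (on $Y^{\ord}_{L(\FF_p)}$) at a point lying over an ordinary Hilbert point; its value is a vector $v\in V_I(m\lambda)^{\GL_n(\FF_p)}$, and the nonvanishing of the Hilbert pullback at that point shows the $\chi$-component of $v$ is nonzero for some $\chi$ with $a_n\leq 0$. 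Projecting $V_I(m\lambda)^{\GL_n(\FF_p)}$ onto $V_I(m\lambda)_{\leq 0}$ need not be injective, so one may need to iterate: if the projection kills $v$, multiply by a power of the Hasse invariant (weight $(p-1)\lambda_\omega$, which is $\le 0$, hence stays in $V_{\le 0}$) or replace $f$ by a symmetric transform $\sfr^{(e)}_{L(\FF_p)}$ and re-run Step 2 on a possibly different split $\mathbf{F}$, to force the surviving weight component into the non-positive cone. Taking $d$ an appropriate multiple of $m$ (absorbing the Hasse powers and the exponent from the symmetric transforms) yields $V_I(d\lambda)^{\GL_n(\FF_p)}\cap V_I(d\lambda)_{\leq 0}\neq 0$, i.e. $d\lambda\in C_{\zip}$, so $\lambda\in\Ccal_{\zip}$.

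\textbf{Main obstacle.} The crux is Step 3: extracting, from the single inequality "$a_n\le 0$ on one weight of the Hilbert restriction of a trivial-type form", an honest element of $V_I(d\lambda)^{\GL_n(\FF_p)}\cap V_I(d\lambda)_{\le 0}$. The projection $V_I(d\lambda)^{\GL_n(\FF_p)}\twoheadrightarrow V_I(d\lambda)_{\le 0}$ (restriction of the weight-truncation) is not obviously nonzero just because both factors are nonzero, and the Hilbert embedding only pins down the $n$-th coordinate of \emph{one} weight in the support of the evaluated form. Controlling that the relevant weight component is not annihilated — likely by choosing the split field $\mathbf{F}$ and the embedding $u$ cleverly so that the associated flag of $T$-weight line bundles on $X_H$ separates the weights one cares about, combined with van Hoften's Hecke-density (Theorem \ref{thm-Hx-dense}) to move the evaluation point around the ordinary Hilbert locus — is where the real work lies, and is presumably why the author routes everything through the stable base locus (Proposition \ref{prop-BI}) rather than through a direct evaluation.
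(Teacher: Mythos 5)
Your Steps 1 and 2 follow the paper's route (reduction to a trivial-type form via Theorem \ref{thm-triv-lambda}/Theorem \ref{thm-Rtriv-int}, and the choice, via the stable base locus as in Corollary \ref{cor-pb-nonzero-triv}, of a trivial-type form $f$ whose pullback to a $p$-split Hilbert--Blumenthal subvariety $X_H$ is nonzero). The genuine gap is exactly the point you flag in Step 3: you try to produce an explicit element of $V_I(d\lambda)^{\GL_n(\FF_p)}\cap V_I(d\lambda)_{\leq 0}$ by evaluating $f$ on the $L(\FF_p)$-torsor at an ordinary Hilbert point and then arguing that some non-positive weight component of the resulting invariant vector survives; as you yourself note, nothing forces that component to be nonzero, and the proposed repairs (multiplying by Hasse powers, iterating symmetric transforms, moving the point by Hecke operators) do not obviously terminate. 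This step is not a technicality one can wave through: it is the heart of the theorem, and without it the argument does not close.

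The paper resolves it by inverting the logic rather than constructing an element. First, Proposition \ref{Czip-reformul} reformulates $d\lambda\in C_{\zip}$ as the \emph{non-injectivity} of the truncation map $\pr_{\van}\colon V_I(d\lambda)^{L(\FF_p)}\to V_I(d\lambda)_{\van}$ (here $W_L$-invariance is used to replace $V_I(d\lambda)_{\leq 0}$ by the smaller automorphic part $V_I(d\lambda)_{\aut}$, whose complement is $V_I(d\lambda)_{\van}$). Then Proposition \ref{prop-triv-zip} argues by contradiction: if $\pr_{\van}$ were injective, then, since $u$ induces an isomorphism $T_H\simeq T$, functoriality turns $\pr_{\van}$ into an injective morphism of vector bundles over $X^{\ord}_{\mathbf{F},K'}$, hence an injection
\begin{equation}
\pr_{\van}\colon H^0\bigl(X^{\ord}_{\mathbf{F},K'}, \widetilde{u}^*(\Vcal_I(d\lambda)_{\triv})\bigr) \to H^0\bigl(X^{\ord}_{\mathbf{F},K'}, \Vcal_I(d\lambda)_{\van}\bigr).
\end{equation}
But $\widetilde{u}^*(f)$ is a \emph{global} section on $X_{\mathbf{F},K'}$, so by the split Hilbert cone result \eqref{neg-intro} all its weight components $\Vcal(\chi)$ with some coordinate $a_i>0$ vanish identically; hence $\pr_{\van}(\widetilde{u}^*(f))=0$ while $\widetilde{u}^*(f)\neq 0$ on the (dense) ordinary locus, a contradiction. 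This dispenses entirely with pointwise evaluation, with tracking which weight survives, and with any iteration; it is the single missing idea in your proposal.
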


\begin{rmk} \ 
\begin{assertionlist}
    \item This theorem can be viewed as a cohomology vanishing result in degree $0$, in the sense that it shows that $H^0(\overline{\Acal}_{n,K},\Vcal_I(\lambda))=0$ whenever $\lambda$ lies outside the cone $\Ccal_{\zip}$. 
\item Note that it is possible that $H^0(\overline{\Acal}_{n,K},\Vcal_I(\lambda))=0$ even when $\lambda\in \Ccal_{\zip}$, but for any $\lambda\in \Ccal_{\zip}$, there exists $m\geq 1$ such that $H^0(\GZip^\mu,\Vcal_I(m\lambda))\neq 0$ (and in particular $H^0(\overline{\Acal}_{n,K},\Vcal_I(m\lambda))\neq 0$). This shows that Theorem \ref{main-thm} is the best possible cohomology vanishing result in degree zero that holds for all multiples of characters.
\item Theorem \ref{main-thm} reduces the vanishing of degree zero cohomology of $\Vcal_I(\lambda)$ to the case of $\GZip^\mu$, and this question can be reformulated in terms of representation theory using Theorem \ref{thm-H0-zip}. It is however difficult in general to determine exactly the form of the cone $\Ccal_{\zip}$.
\end{assertionlist}    
\end{rmk}

As preparation for the proof, we set some notation. Let $\lambda=(a_1,\dots,a_n,b)\in X^*(T)$ and let $V_I(\lambda)$ be the representation of $L$ attached to $\lambda$. The non-positive part of $V_I(\lambda)$ (defined in \eqref{nonposV}) can be written as 
\begin{equation}
    V_I(\lambda)_{\leq 0} = \bigoplus_{\substack{\chi=(a'_1,\dots,a'_n,b') \\ a'_n\leq 0}} V_I(\lambda)_{\chi}
\end{equation}
where we denoted by $\chi=(a'_1,\dots,a'_n,b')\in X^*(T)$ the coordinates of $\chi$. By Theorem \ref{thm-H0-zip}, the space $H^0(\GZip^\mu,\Vcal_I(\lambda))$ can be written as the intersection $V_I(\lambda)^{L(\FF_p)} \cap V_I(\lambda)_{\leq 0}$. Note that in this case $W_L\simeq \Sfr_n$ is naturally a subgroup of $L(\FF_p)$, so any element of $H^0(\GZip^\mu,\Vcal_I(\lambda))$ is invariant under the action of $W_L$ on $V_I(\lambda)$. Since $W_L\simeq \Sfr_n$ permutes the first $n$ coordinates of $X^*(T)\subset \ZZ^{n+1}$, it follows that any element of $H^0(\GZip^\mu,\Vcal_I(\lambda))$ lies in the following smaller subspace
\begin{equation}
    V_I(\lambda)_{\aut} \colonequals  \bigoplus_{\substack{\chi=(a'_1,\dots,a'_n,b') \\ a'_1,\dots,a'_n\leq 0}} V_I(\lambda)_{\chi}
\end{equation}
We call $V_I(\lambda)_{\aut}$ the "automorphic part" of the representation $V_I(\lambda)$. By the above, any automorphic form $f$ on $\GZip^\mu$ must satisfy $f\in V_I(\lambda)_{\aut}$. We will prove in the next section that automorphic forms on Siegel modular varieties must satisfy a similar condition, by using a Hilbert--Blumenthal Shimura variety embedded into $\overline{\Acal}_{n,K}$. As preparation for this, we make the following remarks. Define the "vanishing part" of $V_I(\lambda)$ as the direct sum
\begin{equation}
     V_I(\lambda)_{\van} \colonequals   \bigoplus_{\substack{\chi=(a'_1,\dots,a'_n,b') \\ \textrm{some } a'_i>0}} V_I(\lambda)_{\chi}.
\end{equation}
Hence we have a decomposition $V_I(\lambda)=V_I(\lambda)_{\aut}\oplus V_I(\lambda)_{\van}$. Write $\pr_{\van}\colon V_I(\lambda)\to V_I(\lambda)_{\van}$ for the projection map onto $V_I(\lambda)_{\van}$ afforded by this decomposition. By the above discussion, the space $H^0(\GZip^\mu,\Vcal_I(\lambda))$ sits in an exact sequence
\begin{equation}
\xymatrix@M=7pt{
  0\ar[r] & H^0(\GZip^\mu,\Vcal_I(\lambda)) \ar[r] &V_I(\lambda)^{L(\FF_p)} \ar[r]^{\pr_{\van}}  & V_I(\lambda)_{\van}.
}
\end{equation}
Here, we denoted again by $\pr_{\van}$ the composition of $\pr_{\van}$ and the inclusion of $V_I(\lambda)^{L(\FF_p)}$ into $V_I(\lambda)$. Therefore, we have the following obvious reformulation of the cone $C_{\zip}$:

\begin{proposition}\label{Czip-reformul}
A character $\lambda\in X^*(T)$ lies in $C_{\zip}$ if and only if the map
\begin{equation}
    \pr_{\van}\colon V_I(\lambda)^{L(\FF_p)} \to V_I(\lambda)_{\van}
\end{equation}
is not injective.
\end{proposition}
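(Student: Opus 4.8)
The plan is to read the statement off directly from Theorem~\ref{thm-H0-zip} together with the $W_L$-invariance argument already sketched in the paragraphs preceding the proposition. First I would recall that, by Theorem~\ref{thm-H0-zip}, $H^0(\GZip^\mu,\Vcal_I(\lambda)) = V_I(\lambda)^{L(\FF_p)}\cap V_I(\lambda)_{\leq 0}$. Then I would upgrade $V_I(\lambda)_{\leq 0}$ to $V_I(\lambda)_{\aut}$ on the $L(\FF_p)$-invariant part: since $W_L\simeq\Sfr_n$ sits inside $L(\FF_p)$ and acts on $X^*(T)\subset\ZZ^{n+1}$ by permuting the first $n$ coordinates, any $v\in V_I(\lambda)^{L(\FF_p)}$ is $W_L$-invariant, so whenever a weight $(a'_1,\dots,a'_n,b')$ occurs in $v$ so do all of its permutations in the first $n$ slots. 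Hence $v$ satisfies $a'_n\leq 0$ on every occurring weight if and only if it satisfies $a'_1,\dots,a'_n\leq 0$ on every occurring weight, i.e. $V_I(\lambda)^{L(\FF_p)}\cap V_I(\lambda)_{\leq 0} = V_I(\lambda)^{L(\FF_p)}\cap V_I(\lambda)_{\aut}$. This gives $H^0(\GZip^\mu,\Vcal_I(\lambda)) = V_I(\lambda)^{L(\FF_p)}\cap V_I(\lambda)_{\aut}$.

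Next I would invoke the $T$-stable decomposition $V_I(\lambda) = V_I(\lambda)_{\aut}\oplus V_I(\lambda)_{\van}$ and the attached projection $\pr_{\van}$. For any linear subspace $U\subseteq V_I(\lambda)$, the kernel of the composite $U\hookrightarrow V_I(\lambda)\xrightarrow{\pr_{\van}}V_I(\lambda)_{\van}$ is exactly $U\cap V_I(\lambda)_{\aut}$. Taking $U = V_I(\lambda)^{L(\FF_p)}$ and using the previous paragraph identifies this kernel with $H^0(\GZip^\mu,\Vcal_I(\lambda))$. Therefore the map $\pr_{\van}\colon V_I(\lambda)^{L(\FF_p)}\to V_I(\lambda)_{\van}$ is injective if and only if $H^0(\GZip^\mu,\Vcal_I(\lambda))=0$, i.e. if and only if $\lambda\notin C_{\zip}$; equivalently, $\lambda\in C_{\zip}$ precisely when $\pr_{\van}$ fails to be injective, which is the assertion.

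There is no real obstacle here: the argument is purely formal once Theorem~\ref{thm-H0-zip} is in hand. The only point deserving a word of care is that $V_I(\lambda)_{\aut}$ is a priori a strictly stronger condition than $V_I(\lambda)_{\leq 0}$, and the passage between the two genuinely relies on the $W_L$-invariance of $L(\FF_p)$-fixed vectors; note also that $\pr_{\van}$ is used only as a linear map (it need not be $L$-equivariant), which is all that is required for the statement.
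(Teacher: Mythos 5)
Your argument is correct and is essentially the paper's own: the paper justifies the proposition by exactly this chain — Theorem~\ref{thm-H0-zip}, the upgrade from $V_I(\lambda)_{\leq 0}$ to $V_I(\lambda)_{\aut}$ on $L(\FF_p)$-invariants via $W_L\simeq \Sfr_n\subset L(\FF_p)$, and the resulting exact sequence identifying $H^0(\GZip^\mu,\Vcal_I(\lambda))$ with the kernel of $\pr_{\van}$ on $V_I(\lambda)^{L(\FF_p)}$. Your write-up just makes explicit the kernel identification that the paper treats as obvious, so there is nothing to add.
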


\subsection{Approximations}

In the paper \cite{Koskivirta-vanishing-Siegel}, we gave a partial result regarding cohomology vanishing of automorphic bundles. 

\begin{theorem}\label{thm-approx}
If $f\in H^0(\overline{\Acal}_{n,K}, \Vcal_I(\lambda))$ is a nonzero automorphic form of weight $\lambda=(a_1,\dots,a_n,b)$, we have: 
\begin{equation}\label{ineq-thm} 
\sum_{i=1}^j a_i + \frac{1}{p} \sum_{i=j+1}^{n} a_i \leq 0 \quad \textrm{ for all }j=1,\dots, n.
\end{equation}
\end{theorem}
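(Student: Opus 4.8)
The plan is to transfer the statement to the flag space and extract the inequalities by restricting $f$ to well-chosen proper curves on which the line bundles $\Vcal_{\flag}(\lambda)$ have an explicitly computable degree. First I would use the identification \eqref{ident-flag-VI} to replace the vector-valued form $f\in H^0(\overline{\Acal}_{n,K},\Vcal_I(\lambda))$ by a nonzero section $\widetilde{f}$ of the line bundle $\Vcal_{\flag}(\lambda)$ on the flag space. To have proper strata at my disposal I would pass to the flag space of a toroidal compactification (section \ref{sec-tor}); Koecher's principle (Theorem \ref{thm-koecher}) guarantees that no section is lost. On this flag space one has the Schubert-type stratification $\{\Flag_w\}_{w\in W}$ pulled back from $\GF^\mu$, with $\dim\Flag_w=\ell(w)+c$ for a fixed constant $c$, Bruhat closure relations, and the $n$ codimension-one closed strata (one for each simple root $\alpha\in\Delta$) cut out by the partial Hasse invariants $h_\alpha\in H^0(\Flag,\Vcal_{\flag}(\eta_\alpha))$ of \cite{Goldring-Koskivirta-Strata-Hasse}. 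The factor $\tfrac1p$ in the asserted inequalities ultimately comes from the Frobenius twist $Q=P_+^{(p)}$, which enters the weight $\eta_{\alpha_n}$ attached to the unique simple root $\alpha_n=2e_n\notin I$.

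The mechanism is then as follows. For each $j=1,\dots,n$ there is a proper rational curve $C_j\cong\PP^1$ in the flag space — the closure of the length-one stratum attached to the $j$-th simple root — and a group-theoretic computation in the root datum of $\GSp_{2n}$ shows that, for $\mu=(c_1,\dots,c_n,*)$, the degree $\deg\big(\Vcal_{\flag}(\mu)|_{C_j}\big)$ equals a fixed positive multiple of $-\big(\sum_{i\le j}c_i+\tfrac1p\sum_{i>j}c_i\big)$. If $\widetilde{f}$ restricts to a nonzero section of $\Vcal_{\flag}(\lambda)|_{C_j}$, then this degree is $\ge 0$ and the $j$-th inequality for $\lambda$ follows. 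When $\widetilde{f}|_{C_j}$ is identically zero one runs the standard clearing procedure: whenever the current section vanishes along a codimension-one stratum, divide by the appropriate power of the corresponding $h_\alpha$, replacing the weight $\mu$ by $\mu-m\eta_\alpha$ with $m\ge 0$; one then checks by an explicit computation that every $\eta_\alpha$ satisfies $\sum_{i\le j}(\eta_\alpha)_i+\tfrac1p\sum_{i>j}(\eta_\alpha)_i\le 0$ (e.g. the classical Hasse weight $(p-1)\lambda_\omega$ visibly does), so that the inequality for $\lambda$ is implied by the one for the cleared weight $\mu'=\lambda-\sum m_\alpha\eta_\alpha$, which lies again in $X^*_{+,I}(T)$.

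The main obstacle is exactly the clearing step. A section may vanish on the deep curve $C_j$ for reasons invisible at codimension one: the divisor of $\widetilde{f}$ can have components that are not Hasse loci yet still contain $C_j$, so dividing by codimension-one partial Hasse invariants need not make $\widetilde{f}|_{C_j}$ nonzero. The fix, which is the combinatorial heart of the argument and follows the stratification-Hasse machinery of \cite{Goldring-Koskivirta-Strata-Hasse} (compare the length-one-strata formalism of \cite[\S2.2]{Goldring-Koskivirta-divisibility}), is to descend the Bruhat stratification one codimension at a time, at each intermediate stratum clearing that stratum's own partial Hasse loci before restricting further, until $C_j$ is reached with a genuinely nonzero restriction; one must control which codimension-one substratum is met at each step and verify that the accumulated clearing never moves the weight out of $X^*_{+,I}(T)$ or reverses the sign of the relevant functional. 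The two remaining ingredients — the degree of $\Vcal_{\flag}(\mu)$ on $C_j$ and the sign check on the $\eta_\alpha$ — are finite explicit computations in the root datum and do not pose a genuine difficulty.
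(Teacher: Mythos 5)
There is no in-paper proof to compare against: the paper imports Theorem \ref{thm-approx} wholesale from \cite{Koskivirta-vanishing-Siegel}, so your proposal has to stand on its own, and as written it has two genuine gaps. The first is the degree formula you build everything on. The closure of a length-one stratum in the flag space is attached to a single simple root, and the degree of $\Vcal_{\flag}(\mu)$ on such a curve is, up to sign and a possible factor of $p$ coming from the Frobenius twist, a pairing of $\mu$ with that one coroot --- a linear form involving at most the two adjacent coordinates $c_j,c_{j+1}$ (or just $c_n$ for the root outside $I$), of the shape $pc_j-c_{j+1}$ or similar. It is not a positive multiple of the partial-sum functional $\sum_{i\le j}c_i+\frac{1}{p}\sum_{i>j}c_i$ occurring in \eqref{ineq-thm}. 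So even in the favorable case where $\widetilde{f}$ restricts nontrivially to your curve $C_j$, a single length-one curve does not yield the $j$-th inequality; the partial sums could only emerge from combining many strata together with exact bookkeeping of the weights removed during clearing, which is precisely what is not done.

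The second and more serious gap is the clearing/descent step itself, which you correctly identify as the main obstacle but then only describe rather than resolve. Descending the stratification one codimension at a time, dividing at each stage by that stratum's partial Hasse invariants and controlling how the weight moves, is exactly the intersection-sum-cone machinery of \cite{Goldring-Koskivirta-Strata-Hasse} and \cite{Goldring-Koskivirta-divisibility}; the present paper states explicitly that this approach becomes computationally unmanageable once the Weyl group is large, and even for $n=3$ it required $p\geq 5$. The issues you list (which codimension-one substratum is met, whether stratum closures are irreducible, whether the accumulated weight shifts stay in $X^*_{+,I}(T)$ and preserve the sign of the functional) are the actual content of a proof by this route, and none of them is established. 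Since Theorem \ref{thm-approx} holds for every $n$ and every $p$, a working proof must bypass this descent; the ingredients quoted here from \cite{Koskivirta-vanishing-Siegel} --- prime-to-$p$ Hecke-stability of stable base loci (Theorem \ref{thm-B-stable}) together with density of ordinary Hecke orbits (Theorem \ref{thm-Hx-dense}) --- indicate that the argument there is of a global, Hecke-theoretic nature rather than a stratum-by-stratum division by partial Hasse invariants. As it stands, your proposal proves the inequalities only under the unverified degree formula and the unproved clearing claim, so it does not constitute a proof.
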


In other words, if we define by $\Ccal_{\appro}$ the cone of characters $\lambda\in X^*(T)$ satisfying the above $n$ inequalities \eqref{ineq-thm}, then one has the containment $\Ccal(\overline{\FF}_p)\subset \Ccal_{\appro}$. This result was predicted by the Cone Conjecture. Indeed, we showed in \cite{Goldring-Koskivirta-GS-cone} that the cone $\Ccal_{\zip}$ satisfies such inequalities for very general pairs $(G,\mu)$ (see \loccit \S 4.3 for details). Hence, Theorem \ref{thm-approx} was already good evidence for the veracity of Conjecture \ref{conjX} (at least in the case of $\overline{\Acal}_{n,K}$).

\subsection{\texorpdfstring{The case $n=3$}{}}

In the case of $\overline{\Acal}_{n,K}$ for $n=3$, we gave a description of the cone $\Ccal_{\zip}$ in \cite{Koskivirta-automforms-GZip}. Specifically, we showed that it is defined inside $X^*_{+,I}(T)$ by two inequalities:
\begin{equation}
  \Ccal_{\zip} = \{ (a_1,a_2,a_3,b)\in X^*_{+,I}(T) \mid \ p^2 a_1+a_2+pa_3\leq 0 \ \textnormal{ and } \  pa_1+p^2 a_2+a_3\leq 0 \}.   
\end{equation}
Furthermore, Goldring and the author showed in \cite{Goldring-Koskivirta-divisibility} that Conjecture \ref{conjX} holds for $n=3$ and $p\geq 5$ for any pair $(X,\zeta_X)$ satisfying Assumption \ref{assumeX}. The proof of \loccit is highly computational and uses the notion of intersection-sum cones to produce inductively vanishing results for certain strata in the flag space $\Flag(X)$. The assumption $p\geq 5$ is essential in this method. In the case $X=\overline{\Acal}_{3,K}$, we deduce from Theorem \ref{main-thm} that the equality $\Ccal(\overline{\FF}_p)=\Ccal_{\zip}$ holds without any assumption on $p$. Hence, we obtain the following vanishing result for coherent cohomology of automorphic vector bundles (for all $p$):

\begin{theorem}
Let $\lambda=(a_1,a_2,a_3,b)\in X^*(T)$ and assume that $p^2 a_1+a_2+pa_3 > 0$ or that $pa_1+p^2 a_2+a_3 > 0$, Then one has
    \begin{equation}
H^0(\overline{\Acal}_{3,K},\Vcal_I(\lambda)) = 0.
    \end{equation}
\end{theorem}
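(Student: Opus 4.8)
The plan is to deduce this last theorem directly from Theorem~\ref{main-thm} together with the explicit description of $\Ccal_{\zip}$ in the case $n=3$. Recall that for $\overline{\Acal}_{3,K}$ we have
\begin{equation}
\Ccal_{\zip} = \{ (a_1,a_2,a_3,b)\in X^*_{+,I}(T) \mid p^2 a_1+a_2+pa_3\leq 0 \ \text{ and } \ pa_1+p^2 a_2+a_3\leq 0 \},
\end{equation}
and that Theorem~\ref{main-thm} gives $\Ccal(\overline{\FF}_p)=\Ccal_{\zip}$. So I would argue by contraposition: suppose $H^0(\overline{\Acal}_{3,K},\Vcal_I(\lambda))\neq 0$ for $\lambda=(a_1,a_2,a_3,b)$. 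Then $\lambda\in C_K(\overline{\FF}_p)\subset \Ccal(\overline{\FF}_p)=\Ccal_{\zip}$, and by the description above both inequalities $p^2 a_1+a_2+pa_3\leq 0$ and $pa_1+p^2 a_2+a_3\leq 0$ must hold. This is exactly the negation of the hypothesis ``$p^2 a_1+a_2+pa_3>0$ or $pa_1+p^2a_2+a_3>0$'', so the vanishing follows.

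The only point requiring a word of care is that $C_K(\overline{\FF}_p)$ itself need not equal $\Ccal_{\zip}$ — only its saturation does — but this is harmless here: if $\lambda$ violated one of the two defining inequalities of $\Ccal_{\zip}$, then so would every positive multiple $m\lambda$ (the inequalities are homogeneous), hence no multiple of $\lambda$ could lie in $C_K(\overline{\FF}_p)\subset \Ccal_{\zip}$, forcing in particular $H^0(\overline{\Acal}_{3,K},\Vcal_I(\lambda))=0$. I would also note that we should first reduce to the case where $\lambda$ is $I$-dominant: if $\lambda\notin X^*_{+,I}(T)$ then $\Vcal_I(\lambda)=0$ and the conclusion is trivial, so we may assume $\lambda\in X^*_{+,I}(T)$ and apply the explicit form of $\Ccal_{\zip}$ as stated.

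There is essentially no obstacle in this final statement: it is a formal consequence of Theorem~\ref{main-thm} and the previously established equations for $\Ccal_{\zip}$ when $n=3$ from \cite{Koskivirta-automforms-GZip}. The genuine content — removing the hypothesis $p\geq 5$ that was needed in \cite{Goldring-Koskivirta-divisibility} — is entirely absorbed into the proof of Theorem~\ref{main-thm}, which works for all primes $p$. Thus the proof here is just: assume nonvanishing, invoke Theorem~\ref{main-thm} to place $\lambda$ in $\Ccal_{\zip}$, read off the two inequalities, and contradict the hypothesis.
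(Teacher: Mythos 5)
Your proposal is correct and follows exactly the paper's route: the theorem is obtained as a formal consequence of Theorem~\ref{main-thm} combined with the explicit description of $\Ccal_{\zip}$ for $n=3$ from \cite{Koskivirta-automforms-GZip}, and your handling of the saturation issue (via homogeneity of the two inequalities) and of non-$I$-dominant $\lambda$ is the right way to make that deduction precise.
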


Although highly tedious and computational, one advantage of the method of intersection-sum cones used in \cite{Goldring-Koskivirta-divisibility} to prove the above result (when $p\geq 5$) is that it also provides divisibility results for automorphic forms. Indeed, we showed in \loccit the following result. Let $\Ha_{\alpha_1}$ be the partial Hasse invariant explained in \loccit \S 3.3 (the vanishing locus of $\Ha_{\alpha_1}$ is one of the Zariski closure of one of the three codimension one strata in $\Flag(S_K)$). We showed:

\begin{theorem}\label{thm-divSp6}
Assume $p\geq 5$. Let $f\in H^0(\overline{\Acal}_{3,K},\Vcal_{I}(\lambda))$ be an automorphic form, viewed as a section of $\Vcal_{\flag}(\lambda)$ on $\Flag(S_K)$, and assume that $\lambda=(a_1,a_2,a_3,b)\in X^*(T)$ satisfies $p^2a_1+pa_2+a_3>0$. Then $f$ is divisible by the partial Hasse invariant $\Ha_{\alpha_1}$.
\end{theorem}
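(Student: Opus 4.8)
The plan is to reinterpret ``divisible by $\Ha_{\alpha_1}$'' as the vanishing of $f$ along a single prime divisor of the flag space, and then to obtain that vanishing from the intersection-sum cone machinery of \cite{Goldring-Koskivirta-divisibility}, specialized to $G=\GSp_{6,\FF_p}$.

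\emph{Step 1 (reduction to vanishing on a divisor).} Passing to a smooth toroidal compactification $S^{\Sigma}$ of $\overline{\Acal}_{3,K}$, using Koecher's principle (Theorem \ref{thm-koecher}) and the relation $\pi_*\Vcal_{\flag}(\lambda)=\Vcal_I(\lambda)$, we may view $f$ as a section of the line bundle $\Vcal_{\flag}(\lambda)$ on the smooth proper scheme $\Flag(S^{\Sigma})$. Recall from \cite{Goldring-Koskivirta-Strata-Hasse, Imai-Koskivirta-zip-schubert} that $\Ha_{\alpha_1}$ is a partial Hasse invariant whose divisor of zeros is, with multiplicity one, the closure $Z_1\colonequals\overline{\Flag(S^{\Sigma})_{w_1}}$ of the codimension-one flag stratum attached to $\alpha_1$. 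An argument in the spirit of Lemma \ref{lem-irred-conn} and Corollary \ref{flag-non-ord-irred} (valid since $n=3\geq 2$, after restriction to a connected component) shows that $Z_1$ is irreducible. Hence the local ring of $\Flag(S^{\Sigma})$ at the generic point of $Z_1$ is a discrete valuation ring with uniformizer $\Ha_{\alpha_1}$, and since $\Ha_{\alpha_1}$ has no other zeros, $f$ is divisible by $\Ha_{\alpha_1}$ if and only if $f|_{Z_1}=0$.

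\emph{Step 2 (the intersection-sum induction on $Z_1$).} The divisor $Z_1$ is itself ``flag-space-like'': it is proper, it carries the induced stratification $Z_1=\bigsqcup_{w'\preccurlyeq w_1}\Flag(S^{\Sigma})_{w'}$, its length-one strata are proper (\cite[\S2.2]{Goldring-Koskivirta-divisibility}), and it inherits the remaining partial Hasse invariants. Following \cite{Goldring-Koskivirta-divisibility}, one analyzes $f|_{Z_1}$ by descending induction along $\preccurlyeq$: at each step one divides out the appropriate power of a partial Hasse invariant and tracks the $\sigma$-twisted weight of the resulting leading term, until one reaches a proper length-one stratum (a curve $C$) on which $\Vcal_{\flag}$ of the tracked weight has negative degree, forcing vanishing and, unwound, forcing $f|_{Z_1}=0$. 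The intersection-sum cone $C^{\mathrm{is}}_{w_1}$ is precisely the set of weights for which this cascade does not force vanishing along $Z_1$, and the computation with $W(\GSp_6,T)\simeq(\ZZ/2\ZZ)^3\rtimes\Sfr_3$ yields that the single obstruction to membership is the inequality $p^2 a_1+p a_2+a_3\leq 0$. Since by hypothesis $p^2 a_1+p a_2+a_3>0$, we get $\lambda\notin C^{\mathrm{is}}_{w_1}$, hence $f|_{Z_1}=0$, hence $f$ is divisible by $\Ha_{\alpha_1}$.

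\emph{Main obstacle.} The heart of the proof is the combinatorial bookkeeping of these intersection-sum cones for the rank-three symplectic Weyl group: one must, for every chain of strata terminating at $w_1$, compute the $p$-power-weighted sums of lowest weights entering the degree formula on the terminal curves and check that the only effective constraint is the asserted linear form $p^2 a_1+p a_2+a_3$. This is also exactly where $p\geq 5$ is needed: for $p\in\{2,3\}$ the numerology of these $p$-power sums degenerates (several of the degree inequalities on the terminal curves cease to be strict, and the stratification in characteristic $2$ or $3$ is non-generic), so the cascade no longer propagates vanishing. Removing the hypothesis $p\geq 5$ for the plain \emph{vanishing} statement (Theorem~2) is achieved in the present paper by the entirely different trivial-type and Hilbert-embedding argument, which however does not deliver the divisibility refinement.
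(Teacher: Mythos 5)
The paper does not reprove Theorem \ref{thm-divSp6}: it is recalled from \cite{Goldring-Koskivirta-divisibility}, and the surrounding discussion explicitly says that the trivial-type/Hilbert-embedding method of the present article \emph{cannot} produce such divisibility statements. So your strategy --- reduce divisibility by $\Ha_{\alpha_1}$ to vanishing of $f$ along the multiplicity-one divisor of $\Ha_{\alpha_1}$ on the (compactified) flag space, then force that vanishing by the intersection-sum cone induction --- is the same route as the actual, cited proof, and your Step 1 is sound on the smooth compactification (divisibility of a section of a line bundle by $\Ha_{\alpha_1}$ is equivalent to vanishing along each component of its zero divisor, given the multiplicity-one statement from \loccitn).

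The problem is that, as a self-contained argument, your write-up stops exactly where the theorem's content begins. Everything hinges on the combinatorial verification you defer in Step 2: that for $\GSp_6$ the intersection-sum cone attached to the codimension-one stratum of $\alpha_1$ is cut out by the single inequality $p^2a_1+pa_2+a_3\leq 0$, and that every chain of strata terminating in a proper length-one stratum yields degree bounds that actually propagate the vanishing back up to $Z_1$. You assert this outcome rather than derive it, and your explanation of where $p\geq 5$ enters (degeneration of the $p$-power numerology for $p\in\{2,3\}$) is a guess; the paper only records that the hypothesis is essential to the method, and this is precisely the ``highly tedious and computational'' part that constitutes the proof in \cite{Goldring-Koskivirta-divisibility}. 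A smaller slip: the irreducibility of $Z_1$ does not follow from Lemma \ref{lem-irred-conn} or Corollary \ref{flag-non-ord-irred}, which concern the preimage of the non-ordinary locus, not closures of individual flag strata; fortunately irreducibility is not needed, since vanishing along every component of $\div(\Ha_{\alpha_1})$ suffices. In short: right (and essentially the only available) approach, matching the cited proof, but the decisive $\GSp_6$ intersection-sum computation and the role of $p\geq 5$ are missing, so the proposal is an outline that ultimately leans on \cite{Goldring-Koskivirta-divisibility} for the very step to be proved.
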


In contrast, the method used in the present paper does not give any information about the vanishing of cohomology on smaller Ekedahl--Oort strata or smaller strata in the flag space $\Flag(S_K)$. Therefore we cannot prove such a divisibility result.

\subsection{Proof of the Cone Conjecture}

In this section, we prove Theorem \ref{main-thm}. Since $\Ccal_{\zip}\subset \Ccal(\overline{\FF}_p)$, it suffices to show that if $H^0(\overline{\Acal}_{n,K},\Vcal_I(\lambda))\neq 0$, then there exists a nonzero element in $H^0(\GZip^\mu,\Vcal_I(d\lambda))$ for a certain $d\geq 1$. First, by Theorem \ref{thm-triv-lambda}, there exists $1\leq d \leq N$ such that 
\begin{equation}\label{triv-nonzero}
    H^0(\overline{\Acal}_{n,K},\Vcal_I(d\lambda)_{\triv}) \neq 0.
\end{equation}
We choose an embedding of a Hilbert--Blumenthal Shimura variety $\widetilde{u}\colon X_{\mathbf{F},K'}\to \overline{\Acal}_{n,K}$, as explained in section \ref{hb-var-sec}. We impose that $p$ is split in $\mathbf{F}$. Write $u\colon H\to G$ the morphism of $\FF_p$-groups given in \ref{u-embedQ}. For any character $\chi\in X^*(T_H)$, recall that we denote by $\Lcal(\chi)$ the attached line bundle on $X_{\mathbf{F},K'}$ (see section \ref{hb-var-sec}). We show the following result:
\begin{proposition}\label{prop-triv-zip}
Let $\lambda\in X^*(T)$ and $f\in H^0(\overline{\Acal}_{n,K},\Vcal_I(\lambda))_{\triv}$ a nonzero automorphic form of trivial-type. Assume that $\widetilde{u}^*(f) \in H^0(X_{\mathbf{F},K'}, \widetilde{u}^*(\Vcal_I(\lambda)))$ is nonzero. Then we have
\begin{equation}
    H^0(\GZip^\mu,\Vcal_I(\lambda))\neq 0.
\end{equation}
\end{proposition}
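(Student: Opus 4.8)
The plan is to show that the representation-theoretic space
$H^0(\GZip^\mu,\Vcal_I(\lambda)) = V_I(\lambda)^{L(\FF_p)}\cap V_I(\lambda)_{\leq 0}$
of Theorem~\ref{thm-H0-zip} (with $L=\GL_n$, so $L(\FF_p)=\GL_n(\FF_p)$) is nonzero, by extracting a nonzero vector from $f$. The mechanism is the one advertised in the introduction: the trivial-type hypothesis on $f$ encodes membership in the ``ordinary part'' $V_I(\lambda)^{L(\FF_p)}$, while nonvanishing of $\widetilde u^*(f)$ on the Hilbert--Blumenthal variety $X_H$ with $p$ split in $\mathbf F$ forces the relevant $T$-weights to be non-positive, \ie into the ``negative part'' $V_I(\lambda)_{\leq 0}$, via the split-case vanishing theorem recorded in \eqref{neg-intro}.

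First I would unwind the trivial-type condition. By \eqref{Ucal-ord} the ordinary locus $\overline{\Acal}_{n,K}^{\ord}$ carries the $L(\FF_p)$-torsor attached to $\zeta_G^{\ord}\colon \overline{\Acal}_{n,K}^{\ord}\to\Ucal^{\ord}\simeq B(L(\FF_p))$, and because $L(\FF_p)$ acts trivially on $V_I(\lambda)^{L(\FF_p)}$, the bundle $\Vcal_I(\lambda)_{\triv}$ is the constant bundle $V_I(\lambda)^{L(\FF_p)}\otimes\Ocal$ over $\overline{\Acal}_{n,K}^{\ord}$. Hence $f^{\ord}\colonequals f|_{\overline{\Acal}_{n,K}^{\ord}}$ is nothing but a regular map $\phi\colon \overline{\Acal}_{n,K}^{\ord}\to V_I(\lambda)^{L(\FF_p)}$. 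Using the commutative square \eqref{diag-com-Shim} and the fact that $u_{\zip}$ sends the open stratum of $\HZip$ into the open stratum of $\GZip$ (both attached to the identity, and $u(1)=1$), the map $\widetilde u$ restricts to $X_H^{\ord}\to\overline{\Acal}_{n,K}^{\ord}$; thus $g\colonequals\widetilde u^*(f)$ is nonzero and $g^{\ord}=\phi\circ(\widetilde u|_{X_H^{\ord}})$ is a regular map $X_H^{\ord}\to V_I(\lambda)^{L(\FF_p)}$. In particular the image of $g^{\ord}$ already lies in $V_I(\lambda)^{L(\FF_p)}$.

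Next I would exploit the weight decomposition on $X_H$. Since for $H$ the Levi coincides with the torus $T_H$, the $\Ocal_{\mathbf F}$-action decomposes $\widetilde u^*\Vcal_I(\lambda)=\zeta_H^*u_{\zip}^*\Vcal_I(\lambda)$ on $X_H$ into a direct sum of the line bundles $\Lcal(\chi)$, with $\chi$ running over the $T_H$-weights of $u^*V_I(\lambda)$ --- which are exactly the $T$-weights of $V_I(\lambda)$ under the isomorphism $u\colon T_H\to T$, and $u$ restricts to $T_H(\FF_p)\to T(\FF_p)$. Write $g=\sum_\chi g_\chi$ in this decomposition. Over $X_H^{\ord}$ the bundle $\Lcal(\chi)$ becomes the one associated with $\chi|_{T_H(\FF_p)}$, so only the summands with $\chi|_{T_H(\FF_p)}=1$ restrict to constant subbundles; since $g^{\ord}$ takes values in the constant subbundle $V_I(\lambda)^{L(\FF_p)}\otimes\Ocal$, the components $g_\chi$ with $\chi|_{T_H(\FF_p)}\neq 1$ vanish on the ordinary locus (dense in each connected component of $X_H$) and hence vanish. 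For every remaining $\chi$ with $g_\chi\neq 0$ we then have $H^0(X_H,\Lcal(\chi))\neq 0$, and as $p$ splits in $\mathbf F$ the Hilbert--Blumenthal vanishing theorem \eqref{neg-intro} forces all $\omega$-components of $\chi$ to be $\leq 0$; translating through $u^*\colon X^*(T)\to X^*(T_H)$ and the explicit shape \eqref{emb-split} of $u$, this says $\chi=(a_1',\dots,a_n',b')$ with $a_i'\leq 0$ for all $i$, so $\langle\chi,\alpha^\vee\rangle\leq 0$ for the unique $\alpha\in\Delta^P$, \ie $V_I(\lambda)_\chi\subseteq V_I(\lambda)_{\leq 0}$.

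Putting the two inputs together, $g^{\ord}=\sum_\chi g_\chi^{\ord}$ is valued simultaneously in $V_I(\lambda)^{L(\FF_p)}$ and in $\bigoplus_{\chi\colon g_\chi\neq 0}V_I(\lambda)_\chi\subseteq V_I(\lambda)_{\leq 0}$, so its image lies in $V_I(\lambda)^{L(\FF_p)}\cap V_I(\lambda)_{\leq 0}$; since $g\neq 0$ and $X_H^{\ord}$ is dense, $g^{\ord}\neq 0$ and this intersection is nonzero, which by Theorem~\ref{thm-H0-zip} equals $H^0(\GZip^\mu,\Vcal_I(\lambda))$ --- equivalently, $\pr_{\van}$ in Proposition~\ref{Czip-reformul} fails to be injective. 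The step I expect to be the main obstacle is the bookkeeping in the third paragraph: proving carefully that the line-bundle decomposition of the Hilbert automorphic bundle is compatible with the $L(\FF_p)$-torsor defining the trivial part (so that ``valued in the constant subbundle'' genuinely annihilates the components with $\chi|_{T_H(\FF_p)}\neq 1$ after restriction), together with the precise matching of the Hilbert $\omega$-weights with the coordinates on $X^*(T)$ coming from \eqref{emb-split}. Once these compatibilities are pinned down the rest is formal, and it is exactly here that both hypotheses on $f$ --- trivial-type, and nonvanishing of the pullback --- are used.
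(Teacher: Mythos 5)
Your argument is correct and is essentially the paper's own proof: the same embedding of a Hilbert--Blumenthal variety with $p$ split, the same reading of the trivial-type hypothesis as saying that $f^{\ord}$ is a map into the constant bundle with fiber $V_I(\lambda)^{L(\FF_p)}$ over the ordinary locus, the same weight decomposition of $\widetilde{u}^*(\Vcal_I(\lambda))$ into line bundles $\Lcal(\chi)$ whose components with a positive coordinate are killed by the split-case Hilbert vanishing (the cone conjecture for $X_H$), and the same conclusion via Theorem \ref{thm-H0-zip}. The only difference is presentational: you extract a nonzero vector of $V_I(\lambda)^{L(\FF_p)}\cap V_I(\lambda)_{\leq 0}$ directly from the values of $\widetilde{u}^*(f)$ on the ordinary locus, whereas the paper packages the same two inputs as the failure of injectivity of $\pr_{\van}$ on $V_I(\lambda)^{L(\FF_p)}$ (Proposition \ref{Czip-reformul}) and argues by contradiction.
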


\begin{proof}
By Proposition \ref{Czip-reformul}, it suffices to show that the map
\begin{equation}\label{prvan-inj}
    \pr_{\van}\colon V_I(\lambda)^{L(\FF_p)} \to V_I(\lambda)_{\van}
\end{equation}
is not injective. For a contradiction, assume that \eqref{prvan-inj} is injective. By the explicit form of $u$ (see end of section \ref{hb-var-sec}), it induces an isomorphism $u\colon T_H\to T$ and a homomorphism $u\colon B_H\to B$. By assumption, $\widetilde{u}^*(f)$ is a nonzero section of $\widetilde{u}^*(\Vcal_I(\lambda))$. By pullback via $u$, we van view $V_I(\lambda)$ as a $T_H$-representation. Since $T_H\simeq T$, the weight decomposition $V_I(\lambda)=\bigoplus_{\chi\in X^*(T)} V_I(\lambda)_\chi$ gives rise to a similar decomposition of $\widetilde{u}^*(\Vcal_I(\lambda))$ into vector bundles as below:
\begin{equation}
    \widetilde{u}^*(\Vcal_I(\lambda))=\bigoplus_{\chi\in X^*(T)} \Vcal(\chi)
\end{equation}
where $\Vcal(\chi)$ is the vector bundle on $X_{\mathbf{F}, K'}$ corresponding to the $T_H$-representation $V_I(\lambda)_\chi$. In particular, $\Vcal(\chi)$ is a direct sum of a finite number of copies of the line bundle $\Lcal(\chi)$. 

Similarly, view the $T$-representation $V_I(\lambda)_{\van}$ as a $T_H$-representation via $u\colon T_H\to T$. It induces by functoriality a vector bundle $\Vcal(\lambda)_{\van}$ on $X_{\mathbf{F}, K'}$ (which does not exist on $\overline{\Acal}_{n,K}$, since $V_I(\lambda)_{\van}$ is not an $L$-representation). Since the projection $\pr_{\van}\colon V_I(\lambda)\to V_I(\lambda)_{\van}$ is a morphism of $T$-representations, it induces by functoriality a morphism of vector bundles on $X_{\mathbf{F}, K'}$:
\begin{equation}
    \pr_{\van}\colon \widetilde{u}^*(\Vcal_I(\lambda))\to \Vcal_I(\lambda)_{\van}.
\end{equation}
Similarly, $\widetilde{u}$ induces an embedding $\widetilde{u} \colon X^{\ord}_{\mathbf{F},K'}\to S^{\ord}_{K}$. Any $T_H(\FF_p)$-representation gives rise to a vector bundle on the scheme $X^{\ord}_{\mathbf{F},K'}$. The vector bundle on $X^{\ord}_{\mathbf{F},K'}$ attached to $V_I(\lambda)^{L(\FF_p)}$ coincides with $\widetilde{u}^*(\Vcal_I(\lambda)_{\triv})$. Moreover, the map 
\begin{equation}
\pr_{\van}\colon V_I(\lambda)^{L(\FF_p)}\to V_I(\lambda)_{\van}    
\end{equation}
is an injective morphism of $T(\FF_p)$-representations. Composing with the isomorphism $T_H(\FF_p)\to T(\FF_p)$, it gives rise to an injective morphism of vector bundles over $X^{\ord}_{\mathbf{F},K'}$:
\begin{equation}
    \pr_{\van}\colon \widetilde{u}^*(\Vcal_I(\lambda)_{\triv}) \to \Vcal_I(\lambda)_{\van}.
\end{equation}
In particular, the induced morphism on the spaces of sections over $X^{\ord}_{\mathbf{F},K'}$ is also injective:
\begin{equation}\label{inj-morph-Xord}
        \pr_{\van}\colon H^0(X^{\ord}_{\mathbf{F},K'}, \widetilde{u}^*(\Vcal_I(\lambda)_{\triv})) \to H^0(X^{\ord}_{\mathbf{F},K'}, \Vcal_I(\lambda)_{\van}).
\end{equation}
By assumption, $\widetilde{u}^*(f)$ is a nonzero section of $\widetilde{u}^*(\Vcal_I(\lambda)_{\triv})$ over $X^{\ord}_{\mathbf{F},K'}$. However, since $f\in H^0(\overline{\Acal}_{n,K},\Vcal_I(\lambda))$ is a global section, $\widetilde{u}^*(f)$ lies in the space
\begin{equation}
    H^0(X_{\mathbf{F},K'}, \widetilde{u}^*(\Vcal_I(\lambda))) = \bigoplus_{\chi\in X^*(T)} H^0(X_{\mathbf{F},K'},\Vcal(\chi)).
\end{equation}
Since the cone conjecture holds for Hilbert--Blumenthal Shimura varieties, the space of global sections $H^0(X_{\mathbf{F},K'},\Vcal(\chi))$ is zero whenever $\chi=(a_1,\dots,a_n,b)$ and $a_i>0$ for some $1\leq i \leq n$. Hence, $\widetilde{u}^*(f)$ maps to zero in the space $H^0(X^{\ord}_{\mathbf{F},K'},\Vcal_I(\lambda)_{\van})$. This contradicts the injectivity of the map \eqref{inj-morph-Xord} and terminates the proof of Proposition \ref{prop-triv-zip}.
\end{proof}

Finally, we complete the proof of our main result Theorem \ref{main-thm}. Assume that the space $H^0(\overline{\Acal}_{n,K},\Vcal_I(\lambda))$ is nonzero. Then by \ref{cor-pb-nonzero-triv}, there exists $d\geq 1$ and a trivial-type automorphic form $f\in H^0(\overline{\Acal}_{n,K},\Vcal_I(d\lambda))_{\triv}\neq 0$ such that the pullback $\widetilde{u}^*(f) \in H^0(X_{\mathbf{F},K'}, \widetilde{u}^*(\Vcal_I(\lambda)))$ is nonzero. By Proposition \ref{prop-triv-zip} above, we deduce $H^0(\GZip^\mu,\Vcal_I(d\lambda))\neq 0$. This terminates the proof of Theorem \ref{main-thm}.

\begin{rmk}\label{rmk-precise-weight}
Assume that Question \ref{quest-Hilbert} has a positive answer. Then we can prove a slightly more precise version of the Cone Conjecture. Namely, if $H^0(\overline{\Acal}_{n,K},\Vcal_I(\lambda))$ is nonzero, then we would be able to deduce that there exists $1\leq d \leq |L(\FF_p)|$ such that  $H^0(\GZip^\mu,\Vcal_I(d\lambda))$ is nonzero.
\end{rmk}

\bibliographystyle{test}
\bibliography{biblio_overleaf}

\newcommand{\etalchar}[1]{$^{#1}$}
\begin{thebibliography}{ABD{\etalchar{+}}66}
\providecommand{\url}[1]{\texttt{#1}}
\providecommand{\urlprefix}{URL }
\providecommand{\eprint}[2][]{\url{#2}}

\bibitem[ABD{\etalchar{+}}66]{SGA3}
M.~Artin, J.~E. Bertin, M.~Demazure, P.~Gabriel, A.~Grothendieck, M.~Raynaud and J.-P. Serre, S{G}{A}3: Sch\'emas en groupes, vol. 1963/64, Institut des Hautes \'Etudes Scientifiques, Paris, 1965/1966.

\bibitem[And23]{Andreatta-modp-period-maps}
F.~Andreatta, On two {${\rm mod}\,p$} period maps: {E}kedahl-{O}ort and fine {D}eligne-{L}usztig stratifications, Math. Ann. 385 (2023), no. 1-2, 511--550.

\bibitem[Cha95]{Chai-ordinary-isogeny-invent-math}
C.-L. Chai, Every ordinary symplectic isogeny class in positive characteristic is dense in the moduli, Invent. Math. 121 (1995), no.~3, 439--479.

\bibitem[Del79]{Deligne-Shimura-varieties}
P.~Deligne, Vari\'et\'es de {S}himura: {I}nterpr\'etation modulaire, et techniques de construction de mod\`eles canoniques, in Automorphic forms, representations and {$L$}-functions, {P}art 2, edited by A.~Borel and W.~Casselman, vol.~33 of Proc. Symp. Pure Math., Amer. Math. Soc., Providence, RI, 1979 pp. 247--289, {P}roc. {S}ympos. {P}ure {M}ath., {O}regon {S}tate {U}niv., {C}orvallis, {OR}., 1977.

\bibitem[DK17]{Diamond-Kassaei}
F.~Diamond and P.~Kassaei, Minimal weights of {H}ilbert modular forms in characteristic {$p$}, Compos. Math. 153 (2017), no.~9, 1769--1778.

\bibitem[EvdG09]{Ekedahl-Geer-EO}
T.~Ekedahl and G.~van~der Geer, Cycle classes of the {E}-{O} stratification on the moduli of abelian varieties, in Algebra, arithmetic and geometry, edited by Y.~Tschinkel and Y.~Zarhin, vol. 269 of Progress in Math., Birkh\"auser, Boston, MA, June 2009, Penn. State U., PA, 2009 pp. 567--636.

\bibitem[GK18]{Goldring-Koskivirta-global-sections-compositio}
W.~Goldring and J.-S. Koskivirta, Automorphic vector bundles with global sections on {$G$}-{Z}ip$^{\mathcal Z}$-schemes, Compositio Math. 154 (2018), 2586--2605, \href{https://doi.org/10.1112/S0010437X18007467}{DOI 10.1112/S0010437X18007467}, \href{http://msp.org/idx/mr/4582533}{MR 4582533}.

\bibitem[GK19a]{Goldring-Koskivirta-Strata-Hasse}
W.~Goldring and J.-S. Koskivirta, Strata {H}asse invariants, {H}ecke algebras and {G}alois representations, Invent. Math. 217 (2019), no.~3, 887--984, \href{https://doi.org/10.1007/s00222-019-00882-5}{DOI}.

\bibitem[GK19b]{Goldring-Koskivirta-zip-flags}
W.~Goldring and J.-S. Koskivirta, Stratifications of flag spaces and functoriality, IMRN 2019 (2019), no.~12, 3646--3682.

\bibitem[GK22a]{Goldring-Koskivirta-divisibility}
W.~Goldring and J.-S. Koskivirta, Divisibility of mod $p$ automorphic forms and the cone conjecture for certain {S}himura varieties of {H}odge-type, 2022, preprint, \href{https://arxiv.org/abs/2211.16817}{arXiv:2211.16817}.

\bibitem[GK22b]{Goldring-Koskivirta-GS-cone}
W.~Goldring and J.-S. Koskivirta, Griffiths-{S}chmid conditions for automorphic forms via characteristic $p$, 2022, preprint, \href{https://arxiv.org/abs/2211.16819}{ arXiv:2211.16819}.

\bibitem[IK]{Imai-Koskivirta-zip-schubert}
N.~Imai and J.-S. Koskivirta, Weights of mod $p$ automorphic forms and partial {H}asse invariants, preprint, \href{https://arxiv.org/abs/2211.16207}{arXiv:2211.16207}.

\bibitem[IK21]{Imai-Koskivirta-vector-bundles}
N.~Imai and J.-S. Koskivirta, Automorphic vector bundles on the stack of $G$-zips, Forum Math. Sigma 9 (2021), Paper No. e37, 31 pp.

\bibitem[Jan03]{jantzen-representations}
J.~Jantzen, Representations of algebraic groups, vol. 107 of Math. Surveys and Monographs, American Mathematical Society, Providence, RI, 2nd edn., 2003.

\bibitem[Kim18]{Kim-Rapoport-Zink-uniformization}
W.~Kim, Rapoport--{Z}ink uniformization of {S}himura varieties, Forum Math. Sigma 6, e16 (2018).

\bibitem[Kis10]{Kisin-Hodge-Type-Shimura}
M.~Kisin, Integral models for {S}himura varieties of abelian type, J. Amer. Math. Soc. 23 (2010), no.~4, 967--1012.

\bibitem[Kos19]{Koskivirta-automforms-GZip}
J.-S. Koskivirta, Automorphic forms on the stack of {$G$}-zips, Results Math. 74 (2019), no.~3, Paper No. 91, 52 pp.

\bibitem[Kos23a]{Koskivirta-vanishing-Siegel}
J.-S. Koskivirta, {A} vanishing theorem for vector-valued {S}iegel automorphic forms in characteristic $p$, 2023, preprint, \href{https://arxiv.org/abs/2308.06870}{arXiv:2308.06870 }.

\bibitem[Kos23b]{Koskivirta-Hilbert-strata}
J.-S. Koskivirta, Cohomology vanishing for {E}kedahl-{O}ort strata on {H}ilbert-{B}lumenthal {S}himura varieties, 2023, preprint, \href{https://arxiv.org/abs/2311.18562}{arXiv:2311.18562 }.

\bibitem[Kos24]{Koskivirta-Mori-dream-space}
J.-S. Koskivirta, The stack of $G$-zips is a {M}ori dream space, 2024, preprint, \href{https://arxiv.org/abs/2402.09852 }{arXiv:2402.09852 }.

\bibitem[Kot84]{Kottwitz-Shimura-twisted-orbital}
R.~E. Kottwitz, Shimura varieties and twisted orbital integrals, Mathematische Annalen 269 (1984), 287--300.

\bibitem[KW18]{Koskivirta-Wedhorn-Hasse}
J.-S. Koskivirta and T.~Wedhorn, Generalized {$\mu$}-ordinary {H}asse invariants, J. Algebra 502 (2018), 98--119.

\bibitem[LS18]{Lan-Stroh-stratifications-compactifications}
K.-W. Lan and B.~Stroh, Compactifications of subschemes of integral models of {S}himura varieties, Forum Math. Sigma 6 (2018), e18, 105pp.

\bibitem[MP19]{Madapusi-Hodge-Tor}
K.~Madapusi~Pera, Toroidal compactifications of integral models of {S}himura varieties of {H}odge type, Ann. Sci. \'{E}c. Norm. Sup\'{e}r. (4) 52 (2019), no.~2, 393--514.

\bibitem[PWZ11]{Pink-Wedhorn-Ziegler-zip-data}
R.~Pink, T.~Wedhorn and P.~Ziegler, Algebraic zip data, Doc. Math. 16 (2011), 253--300.

\bibitem[PWZ15]{Pink-Wedhorn-Ziegler-F-Zips-additional-structure}
R.~Pink, T.~Wedhorn and P.~Ziegler, ${F}$-zips with additional structure, Pacific J. Math. 274 (2015), no.~1, 183--236.

\bibitem[SYZ21]{Shen-Yu-Zhang-EKOR}
X.~Shen, C.-F. Yu and C.~Zhang, E{KOR} strata for {S}himura varieties with parahoric level structure, Duke Math. J. 170 (2021), no.~14, 3111--3236.

\bibitem[Vas99]{Vasiu-Preabelian-integral-canonical-models}
A.~Vasiu, Integral canonical models of {S}himura varieties of preabelian type, Asian J. Math. 3 (1999), 401--518.

\bibitem[vH]{van-hoften-ordinary-hecke}
P.~van Hoften, On the ordinary {H}ecke orbit conjecture,, preprint, \href{https://arxiv.org/abs/2112.12422}{arXiv:2112.12422 }.

\bibitem[Zha18]{Zhang-EO-Hodge}
C.~Zhang, Ekedahl-{O}ort strata for good reductions of {S}himura varieties of {H}odge type, Canad. J. Math. 70 (2018), no.~2, 451--480.

\end{thebibliography}

\noindent
Jean-Stefan Koskivirta\\
Department of Mathematics, Faculty of Science, Saitama University, 
255 Shimo-Okubo, Sakura-ku, Saitama City, Saitama 338-8570, Japan \\
jeanstefan.koskivirta@gmail.com

\end{document}